\newcommand{\abs}[1]{\left| #1 \right|}
\newcommand{\one}{\mathds{1}}
\newcommand{\E}{\mathbb{E}}
\newcommand{\Eb}[1]{\mathbb E\left[#1\right]}
\newcommand{\R}{\mathbb{R}}
\newcommand{\rev}{\color{black}}
\theoremstyle{thmstyleone}%
\newtheorem{theorem}{Theorem}
\newtheorem{proposition}[theorem]{Proposition}%
\newtheorem{lemma}[theorem]{Lemma}
\theoremstyle{thmstyletwo}%
\newtheorem{corollary}{Corollary}
\newtheorem{remark}{Remark}%
\theoremstyle{thmstylethree}%
\newtheorem{example}{Example}%
\newtheorem{assumption}{Assumption}
\begin{document}

\title[Nonparametric estimation in McKean-Vlasov SDEs]{Polynomial rates via deconvolution for nonparametric estimation in McKean-Vlasov SDEs}


\author*[1]{\fnm{Chiara} \sur{Amorino}}\email{chiara.amorino@upf.edu}

\author[2]{\fnm{Denis} \sur{Belomestny}}\email{denis.belomestny@uni-due.de}

\author[3]{\fnm{Vytaut\.e} \sur{Pilipauskait\.e}}\email{vytaute.pilipauskaite@gmail.com}

\author[4]{\fnm{Mark} \sur{Podolskij}}\email{mark.podolskij@uni.lu}

\author[4]{\fnm{Shi-Yuan} \sur{Zhou}}\email{shi-yuan.zhou@uni.lu}

\affil[1]{\orgdiv{Barcelona School of Economics}, \orgname{Universitat Pompeu Fabra}, \orgaddress{\street{Ramon Trias Fargas 25-27 }, \city{Barcelona}, \postcode{ES-08005},  \country{Spain}}}

\affil*[2]{\orgdiv{Department of Mathematics}, \orgname{Dusiburg-Essen University}, \orgaddress{\street{Thea-Leymann-Str. 9}, \city{Essen}, \postcode{D - 45127}, \country{Germany}}}

\affil[3]{\orgdiv{Department of Mathematical Sciences}, \orgname{Aalborg University}, \orgaddress{\street{Thomas Manns Vej 23}, \city{Aalborg}, \postcode{DK-9220},  \country{Aalborg}}}

\affil[4]{\orgdiv{Department of Mathematics}, \orgname{University of Luxembourg}, \orgaddress{\street{6 avenue de la Fonte}, \city{Esch-sur-Alzette}, \postcode{L-4364},  \country{Luxembourg}}}


\abstract{This paper investigates the estimation of the interaction function for a class of McKean-Vlasov stochastic differential equations. The estimation is based on observations of the associated particle system at time $T$, 
considering the scenario where both the time horizon $T$ and the number of particles $N$
tend to infinity. Our proposed method recovers polynomial rates of convergence for the resulting estimator. This is achieved under the assumption of exponentially decaying tails for the interaction function. Additionally, we conduct a thorough analysis of the transform of the associated invariant density as a complex function, providing essential insights for our main results..}

\keywords{convergence rates, deconvolution, interacting particle system, McKean-Vlasov SDEs, mean-field models,  propagation of chaos}


\pacs[MSC Classification]{62G20, 62M05, 60G07, 60H10}

\maketitle

\section{Introduction}\label{sec1}

The foundation of stochastic systems involving interacting particles and the development of nonlinear Markov processes, initially introduced by McKean in the 1960s \cite{McK66}, can be traced back to their roots in statistical physics, particularly within the domain of plasma physics. Over subsequent decades, the significance of these systems in probability theory has steadily grown. This area has witnessed the development of fundamental probabilistic tools, including propagation of chaos, geometric inequalities, and concentration inequalities. Pioneering contributions from researchers such as M\'el\'eard \cite{Mel96}, Malrieu \cite{Mal01}, Cattiaux et al. \cite{CatGuiMal08}, and Sznitman \cite{Sni91} have played a crucial role in shaping this field.

However, formulating a modern statistical inference program for these systems remained challenging until the early 2000s, with few exceptions, such as Kasonga's early paper \cite{Kas90}. Several factors contributed to this challenge. Firstly, the advanced probabilistic tools required for estimation were still under development. Secondly, the microscopic particle systems originating from statistical physics were not naturally observable, making the motivation for statistical inference less apparent. This situation began to change around the 2010s with the widespread adoption of these models in various fields where data became observable and collectable. Applications expanded into diverse fields, including the social sciences (e.g., opinion dynamics \cite{Cha17} and cooperative behaviors \cite{Can12}), mathematical biology (e.g., structured models in population dynamics \cite{Mog99} and neuroscience \cite{Bal12}), and finance (e.g., the study of systemic risk \cite{Fou13} and smile calibration \cite{Guy11}). Mean-field games have emerged as a new frontier for statistical developments, as evident in references \cite{9McK, 21McK, 33McK}. This transition has led to a growing need for a systematic statistical inference program, which constitutes the primary focus of this paper. Recently, this interest has manifested in two primary directions. On one front, statistical investigations are rooted in the direct observation of large interacting particle systems, as evidenced in works \cite{Us22, Che21, ComGen23, MaeHof, DelHof22, PavZan22a}. On the other front, statistical inference revolves around the observation of the mean-field limit, the McKean-Vlasov process, as exemplified in \cite{GenCat21a, GenCat21b, Imp21}. Concerning stationary McKean-Vlasov SDEs, the literature is relatively sparse. To the best of our knowledge, only a handful of references exist, including \cite{PavZan22b} and \cite{GenCat23}, which focus on the special McKean-Vlasov model without a potential term. In \cite{GenCat23b}, a more general model is explored.


This paper focuses on statistical inference for an interacting particle system described by the following stochastic differential equation:
\begin{equation}{\label{eq: model}}
X_t^{i, N} = X_0^i + B_t^i - \int_0^t V'(X_s^{i,N})\,ds - \frac{1}{2N} \sum_{j = 1}^N\int_0^t W'(X_s^{i, N} - X_s^{j,N})\, ds, \quad 1 \le i \le N,
\end{equation}
where the processes $B^i := (B_t^i)_{t \ge 0}$ are independent standard Brownian motions with unitary diffusion coefficient 
and $X_0^i$ are i.i.d. random variables with distribution $\mu_0(dx)$. The function $V$ is referred to as  the {\it confinement potential}, while $W$ (or $W'$) is the {\it interaction potential} (or interaction function respectively). Our primary objective is to estimate the interaction function $W'$ based on observations $X^{1,N}_T, ..., X^{N,N}_T$ of the particles, which are solutions of the system \eqref{eq: model}. Our approach hinges on the analysis of the associated inverse problem concerning the underlying stationary Fokker-Planck equation, relying on various results related to the probabilistic properties of the model.

Our research is closely connected to a recent study \cite{BPP} that discusses estimation of the interaction function with a specific semiparametric structure. The authors in \cite{BPP} develop an estimation procedure, demonstrating convergence rates that critically depend on the tail behavior of the nonparametric part of the interaction function $W$. Specifically, assuming a polynomial decay of the tails, they establish logarithmic convergence rates, proven to be optimal in that context. This naturally raises the question of whether polynomial rates can be achieved under a different set of conditions. Our paper aims to address this question, and our key finding is that, by assuming exponential decay of the interaction function $W$, we can introduce an estimator that achieves polynomial convergence rates, as demonstrated in Theorem \ref{th: estim beta}.

Compared to the framework proposed in \cite{BPP}, our model features some distinctions. The smoothness of the confinement potential $V$ emerges as a crucial factor influencing the achieved convergence rate. Additionally, the regularity of the invariant density $\pi$ of the associated McKean-Vlasov equation and the analysis of its Fourier transform $\mathcal{F}(\pi)$ are vital for establishing the asymptotic properties of our estimator. Notably, a lower bound on $\mathcal{F}(\pi)$ is required, presenting one of the primary challenges in our paper. We address this challenge using Hadamard factorization, leading to the desired lower bound under mild assumptions on the model. Furthermore, we provide an example demonstrating how a non-smooth confinement potential $V$
results in the Fourier transform $\mathcal{F}(\pi)$ exhibiting a polynomial decay. As another interesting situation, we consider the case of non-smooth potential $W$ and show that even in this case the Fourier transform of $\pi$ decays exponentially fast. Further technical tools include the extension of the Kantorovich-Rubinstein dual theorem to functions lacking Lipschitz continuity, presented in Theorem \ref{th: dual formulation}, and an extension of the uniform propagation of chaos in the $L^{2p}$-norm without convexity assumption for $W$, as demonstrated in Proposition \ref{prop: prop chaos L4}. \\

\noindent The structure of the paper is as follows. In Section \ref{s: model ass}, we introduce the model  assumptions. This section also offers a concise overview of interacting particle systems, and the relevant tools about Fourier and Laplace transforms. Crucially, we present key probabilistic results that lay the groundwork for our main findings.
Section \ref{s: main} is dedicated to formulating our primary statistical problem and the associated estimation procedure. Additionally, we establish upper bounds on the $L^2$ risk of the proposed drift estimator.
The proof of our main results is provided in Section \ref{S: proof main}, where we delve into the detailed verification of our key findings.
Finally, in Section \ref{s: transform}, we explore the sufficient conditions necessary to ensure that the transforms satisfy the requirements for our main results. All remaining proofs are collected in Section \ref{s: proof technical}.

\subsubsection*{Notation} 
All random variables and stochastic processes are defined on a filtered probability space $(\Omega, \mathcal{F}, (\mathcal{F}_t)_{t \ge 0}, \mathbb{P})$. Throughout the paper, we use the symbol $c$ to represent positive constants, although these constants may vary from one line to another. For any function $f: \R \rightarrow \R$, we denote its supremum as $\| f \|_{\infty} := \sup_{y \in \R} |f(y)|$. The notation $x_n \lesssim y_n$ signifies the existence of a constant $c > 0$, independent of $n$, such that $x_n \le c y_n$. The derivatives of a function $f$ are denoted as $f', f'', \ldots,$ or $f^{(k)}$, $k\geq 1$.
We use $C^k(\R)$ 
to denote the space of $k$ times continuously differentiable functions. 
For a complex number $z \in \mathbb{C}$, we denote its complex conjugate, real part, and imaginary part as $\overline{z}$, $\operatorname{Re}(z)$, and $\operatorname{Im}(z)$, respectively. For $a\in \R$, we use the notation 
$$\mathcal{L}_a:=\{y+\imath a: y \in \mathbb{R}\}.$$

\section{Model and assumptions}{\label{s: model ass}}
We start our analysis by introducing a set of assumptions on the confinement potential $V$ and the interaction potential $W$. It will become evident that the smoothness of $V$ plays a pivotal role in our asymptotic analysis. Our investigation encompasses two distinct scenarios: (a) The confinement potential of infinite smoothness takes the form $V(x) = \alpha x^2/2$ for a positive constant $\alpha$, (b) The confinement potential is expressed as $V(x) = \alpha x^2/2 + \widetilde{V}(x)$, where $\alpha > 0$ and $\widetilde{V}$ is a known function characterized by non-smooth features, as elucidated below.
We assume that $V$ and $W$ satisfy the following hypothesis: \\

\begin{assumption}\label{ass: beta}
The potentials $W\colon \R\to\R$ and $V\colon\R\to\R$ are {\rev non-negative functions} such that
\begin{itemize}
    \item The interaction potential $W\in C^2(\R)$ is even with bounded derivatives $W^\prime \in  
    L^1(\R)$ and $W''$ such that $\inf_{x\in \R} W''(x) = -C_W$ for some $C_W>0$. 
  
    \item The confinement potential $V$ is given by 
    $$
    V(x)=\frac{\alpha}{2}x^2 + \widetilde V(x), \quad \alpha + \inf_{x\in \R} \widetilde{V}''(x) = C_V>0
    $$
    where $C_V$ and $C_W$ satisfy the relation $C_V-C_W>0$. $\widetilde V$ is given by either

    \begin{enumerate}
    \item[A1.] $\widetilde V=0$, or
    \item[A2.] $\widetilde V$ is even, {\rev has polynomial growth} and there exists $J\in\mathbb N,\ J\ge 2$ such that $\widetilde{V}\in C^J(\R)$ and $\widetilde{V}\not\in C^{J+1}(\R)$. Furthermore, for each $2 \le j\le J$,  $\sup_{x\in \R}|\widetilde{V}^{(j)}(x)| = \tilde{c}_{j}<\infty$.
    \end{enumerate}
\end{itemize}
Additionally, the initial distribution admits a density $\mu_0 (dx) =\mu_0 (x) d x$ which satisfies
\begin{equation}\label{eq: cond exp moments}
    \int_{\R} \exp (c x ) \mu_0(x)\, dx <\infty, \quad \forall c\in\R, \qquad \int_{\R} \log(\mu_0(x)) \mu_0(x)\, d x < \infty.
\end{equation}

\end{assumption}

\paragraph{Discussion}
\begin{itemize}
\item[(i)] We would like to emphasize that we do not assume convexity of the interaction potential $W$, as is commonly done in most works. Instead, our assumption is that $W''$ is bounded below by a constant $-C_W<0$. While the geometric convergence of the distribution of the system \eqref{eq: model} to the invariant distribution for $t\to \infty$ now follows from \cite[Theorem~2.1]{CMV03}, Proposition \ref{prop: prop chaos L4} establishes a uniform (in time) propagation of chaos under the above assumptions.
Note also that A1 can be seen as a special case of A2, where $J=\infty$.\\
\item[(ii)] {\rev The presence of the quadratic term in the confinement potential $V$ is required to control the decay of the invariant density (cf. Lemma \ref{l: bound pi}). A similar semi-parametric assumption has been considered in  \cite{BPP}.} \\
\item[(iii)] {\rev In the scenario of a non-smooth confinement potential $\widetilde V$ as described in A2, we obtain an explicit polynomial lower bound on the decay of the characteristic function of the invariant density (cf. Example \ref{exmp}). This is vital for the statistical analysis.}
\end{itemize}

\subsection{Probabilistic Results}
The mean field equation associated to the interacting particle system introduced in \eqref{eq: model} is given by the $1$-dimensional McKean-Vlasov equation
\begin{equation}{\label{eq: mck}}
\overline{X}_t = \overline{X}_0 + B_t - \int_0^t V'(\overline{X}_s)\, ds - \frac{1}{2}\int_0^t (W' \star \mu_s) (\overline{X}_s)\, ds, \qquad t \ge 0,
\end{equation}
where $\mu_t(dx) := \mathbb{P}(\overline{X}_t \in dx)$ and 
$$(W' \star \mu_t)(x) = \int_{\R} W'(x - y) \mu_t(dy), \qquad x\in \R, \qquad t \ge 0.$$
Under \cref{ass: beta}, existence and uniqueness of strong solutions of \cref{eq: model} and \cref{eq: mck} {follow as in \cite{existence, CMV03, Mal03}}.
Additionally, the measure $\mu_t$ possesses a smooth Lebesgue density and the McKean-Vlasov equation admits an invariant density $\pi$ solving the stationary Fokker-Planck equation
\begin{equation}
    \frac 12 \pi^{\prime\prime} = -\frac{\mathrm d}{\mathrm dx}\Bigl(\Bigl(V^\prime + \frac12 W^\prime \star \pi\Bigr)\pi\Bigr),
\end{equation}
which means that $\pi$ is given by
\begin{equation}{\label{eq: pi}}
\pi (x) = \frac{1}{Z_{\pi}} \exp \left( - 2V(x) - W \star \pi (x) \right), \quad x \in \mathbb{R}, 
\end{equation}
with a normalizing constant
$$
Z_{\pi} := \int_{\R} \exp \left(- 2V(x) - W \star  \pi (x)\right) d x < \infty.
$$
The invariant density can be upper and lower bounded under our assumptions, according to the following lemma. Its proof can be found in Section \ref{s: proof technical}.

\begin{lemma}{\label{l: bound pi}}
   Suppose that \cref{ass: beta} holds true. Then, for any $x \in \R$, there exist two constants $c_1, \, c_2 > 0$ such that
   $$c_1 \exp(- \widetilde{C} x^2) \le \pi(x) \le c_2 \exp(- C_V x^2),$$
   with $\widetilde{C} := \alpha + \tilde{c}_2$ and $C_{{V}}$ and $\tilde{c}_2$ are as in \cref{ass: beta}.
    Furthermore, it holds that
    \begin{equation*}
        |\pi^{(n)}(x)| \le c (1 + |x|)^n \exp(- C_V x^2), \quad ~0\le n\le J.
    \end{equation*}

\end{lemma}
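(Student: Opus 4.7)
My plan is to exploit the explicit exponential representation \cref{eq: pi} together with two-sided Taylor estimates on $V$ (based on $C_V \le V''(x) \le \alpha + \tilde{c}_2 = \widetilde{C}$), plus a uniform bound on the convolution term $W \star \pi$. For the upper bound, since $V$ is even, $V'(0)=0$, and a second-order Taylor expansion at the origin combined with $V'' \ge C_V$ gives $V(x) \ge V(0) + \tfrac{C_V}{2}x^2 \ge \tfrac{C_V}{2}x^2$ (using $V\ge 0$). Together with $(W\star \pi)(x) \ge 0$ (from $W,\pi \ge 0$), substituting into \cref{eq: pi} yields the claimed upper bound $\pi(x) \le Z_\pi^{-1}\exp(-C_V x^2)$.

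For the lower bound, the complementary Taylor inequality $V(x) \le V(0) + \tfrac{\widetilde{C}}{2}x^2$ follows from $V'' \le \widetilde{C}$ (with the convention $\tilde{c}_2 := 0$ in case A1). The critical observation is that $W$ itself is globally bounded: since $W$ is even and $W'\in L^1(\R)$, the limits $\lim_{x\to\pm\infty} W(x)$ exist and are finite, so $\|W\|_\infty =: M_W < \infty$. Hence $(W\star\pi)(x) \le M_W$ uniformly, and \cref{eq: pi} gives $\pi(x) \ge Z_\pi^{-1} e^{-2V(0)-M_W} \exp(-\widetilde{C}x^2)$. I expect this step to be the main subtlety: a naive estimate $|W(x)| \lesssim 1+|x|$ coming only from $\|W'\|_\infty < \infty$ would yield $(W\star\pi)(x) \lesssim 1+|x|$, which is insufficient to produce a Gaussian lower bound with exponent exactly $\widetilde{C}$.

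For the derivative bound, I would proceed by induction on $n$, the base case $n=0$ being established above. Differentiating $\log \pi$ gives $\pi'(x) = g(x)\pi(x)$ with $g(x) := -2V'(x) - (W'\star\pi)(x)$. Evenness of $\widetilde{V}'$ (hence $\widetilde{V}'(0)=0$) and the bound $|\widetilde{V}''| \le \tilde{c}_2$ yield $|\widetilde{V}'(x)| \le \tilde{c}_2 |x|$, and therefore $|g(x)| \lesssim 1+|x|$. Applying Leibniz to $\pi^{(n)} = (g\pi)^{(n-1)}$ yields
\begin{equation*}
\pi^{(n)}(x) = \sum_{k=0}^{n-1}\binom{n-1}{k} g^{(k)}(x)\,\pi^{(n-1-k)}(x), \qquad g^{(k)}(x) = -2V^{(k+1)}(x) - (W'\star \pi^{(k)})(x).
\end{equation*}
For $k \ge 1$, $V^{(k+1)}$ is uniformly bounded ($V''$ by $\widetilde{C}$; $V^{(j)} = \widetilde{V}^{(j)}$ by $\tilde{c}_j$ for $3 \le j \le J$), and $|W'\star\pi^{(k)}| \le \|W'\|_\infty \|\pi^{(k)}\|_{L^1}$ is finite by the inductive hypothesis (since $(1+|y|)^k e^{-C_V y^2}$ is integrable). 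Combining these bounds with the inductive estimate $|\pi^{(n-1-k)}(x)| \le c(1+|x|)^{n-1-k}e^{-C_V x^2}$, each summand is dominated by $c(1+|x|)^n e^{-C_V x^2}$, closing the induction.
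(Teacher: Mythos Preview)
Your proposal is correct and follows essentially the same route as the paper. Both arguments bound the convolution term uniformly via $W'\in L^1(\R)$ (you phrase it as $\|W\|_\infty<\infty$, the paper as $|W_0|\le \|W'\|_1$), control $V$ through two-sided second-order Taylor estimates at the origin, and then run the same Leibniz induction for the derivatives; the only cosmetic differences are that you exploit the non-negativity of $V$ and $W$ directly for the upper bound, and you bound $|W'\star\pi^{(k)}|$ by $\|W'\|_\infty\|\pi^{(k)}\|_{L^1}$ whereas the paper uses $\|W'\|_1\|\pi^{(k)}\|_\infty$.
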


\noindent A crucial tool in our estimation procedure consists {in the result which combines uniform in time propagation of chaos
for \eqref{eq: model} with convergence to the equilibrium of \cref{eq: mck}}. 
In order to state it we start by introducing the Wasserstein metric. The \textit{Wasserstein $p$-distance} between two measures $\mu$, $\nu$ on $\R$ is defined by 
$$W_p(\mu, \nu) := \left( \inf_{X \sim \mu, Y \sim \nu} \E[|X - Y|^p]\right)^\frac{1}{p},$$
where the infimum is taken over all the possible couplings $(X, Y)$ of random variables $X$
and $Y$ with respective laws $\mu$ and $\nu$. 

{The following result combines Point (iv) of Theorem 2.1 in \cite{CMV03} with Theorem 5.1 in \cite{Mal03} or Theorem 3.1 in \cite{CatGuiMal08}, adapted to the current framework, see also our \cref{prop: prop chaos L4} below.}

\begin{theorem}{\label{th: prop chaos}}
Let $\overline{X}^i$, $1 \le i \le N$, be i.i.d.\ copies of the process $\overline{X}$ defined in \eqref{eq: mck} so that every $\overline{X}^i$ is driven by the same Brownian motion as the i-th particle of the system \eqref{eq: model} and they are equal at time $0$. Denote by 
$$\Pi_{N,T} = N^{-1} \sum_{i = 1}^N \delta_{X_T^{i,N}}$$
the empirical distribution of the particle system $X_T^{i,N}$ for $1 \le i \le N$, and by $\Pi$ the law associated to the invariant density $\pi$. Under \cref{ass: beta} there exist a constant $c > 0$ independent of $N$ and $T$ such that 
$$\sup_{t \ge 0} \E \left[|X_t^{i,N} - \overline{X}_t^{i}|^2 \right] \le c N^{-1}$$
and 
$$\E [ W_1^2(\Pi_{N,T}, \Pi) ] \le c\left( N^{-1} +  \exp(- \lambda T)\right) =: c N_T^{-1},$$
where $\lambda = C_V - C_W >0$,  and $C_V$ and $C_W$ have been introduced in \cref{ass: beta}.
\end{theorem}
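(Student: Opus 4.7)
The plan is to prove the two bounds separately, exploiting the synchronous coupling in which $\overline{X}^i$ is driven by the same Brownian motion $B^i$ and starts from the same value $X_0^i$ as the $i$-th particle of \cref{eq: model}. Setting $Z_t^i := X_t^{i,N} - \overline{X}_t^i$ and $\phi(t) := N^{-1}\sum_i \E[(Z_t^i)^2]$, the Brownian terms cancel, so $Z^i$ satisfies a purely drift ODE pathwise and $\tfrac12\phi'(t) = N^{-1}\sum_i \E[Z_t^i \dot Z_t^i]$. The confinement contribution $-Z_t^i(V'(X_t^{i,N}) - V'(\overline X_t^i))$ is controlled by $V'' \ge C_V$ and produces $-C_V\phi(t)$. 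For the interaction I would split the difference as $I_t^{i,a}+I_t^{i,b}$ with
\begin{align*}
I_t^{i,a}:=\tfrac{1}{2N}\sum_j\bigl[W'(X^{i,N}_t-X^{j,N}_t)-W'(\overline X^i_t-\overline X^j_t)\bigr], \quad I_t^{i,b}:=\tfrac{1}{2N}\sum_j W'(\overline X^i_t-\overline X^j_t)-\tfrac12(W'\star\mu_t)(\overline X^i_t).
\end{align*}
For $I^{i,a}$ I symmetrize in $i\leftrightarrow j$ using oddness of $W'$ (since $W$ is even), apply the mean value theorem, and invoke $W''\ge -C_W$ to get $N^{-1}\sum_i Z_t^i I_t^{i,a}\ge -\tfrac{C_W}{2}\cdot N^{-1}\sum_i (Z_t^i)^2$. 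For $I^{i,b}$, conditionally on $\overline X_t^i$ the summands are i.i.d., centered, and bounded (because $\|W'\|_\infty<\infty$), so $\E[(I_t^{i,b})^2]\le c/N$; Cauchy–Schwarz and AM–GM then give $|\E[Z_t^i I_t^{i,b}]|\le \epsilon\E[(Z_t^i)^2] + c/(\epsilon N)$. Summing, $\phi'(t)\le -(2C_V-C_W-\epsilon)\phi(t)+c/N$, and since $C_V-C_W>0$ one chooses $\epsilon$ small and applies Grönwall with $\phi(0)=0$ to obtain $\phi(t)\le c/N$ uniformly in $t$.

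For the second bound I would apply the triangle inequality
\begin{align*}
W_1(\Pi_{N,T},\Pi) \le W_1(\Pi_{N,T},\overline\Pi_{N,T}) + W_1(\overline\Pi_{N,T},\mu_T) + W_1(\mu_T,\pi),
\end{align*}
where $\overline\Pi_{N,T}:=N^{-1}\sum_i\delta_{\overline X^i_T}$, and square via $(a+b+c)^2\le 3(a^2+b^2+c^2)$. The coupling term is bounded pathwise by $W_1(\Pi_{N,T},\overline\Pi_{N,T})\le N^{-1}\sum_i|Z_T^i|$, whose squared expectation is $\le \phi(T)\le c/N$ by the first step. The middle term is an empirical-of-i.i.d.\ Wasserstein error in dimension one; since \cref{l: bound pi} gives Gaussian tails for $\pi$ and the geometric convergence of $\mu_T$ to $\pi$ (point (iv) of Theorem 2.1 in \cite{CMV03}) transfers the exponential-moment control to $\mu_T$ uniformly in $T$, a standard one-dimensional Bobkov–Ledoux-type bound supplies $\E[W_1^2(\overline\Pi_{N,T},\mu_T)]\le c/N$. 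Finally, the deterministic term satisfies $W_1(\mu_T,\pi)\le W_2(\mu_T,\pi)\le c e^{-\lambda T}$ with $\lambda=C_V-C_W$, again by \cite{CMV03}, which contributes the $\exp(-\lambda T)$ summand.

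I expect the delicate step to be the uniform-in-time treatment of $I^{i,a}$ in Part 1 in the \emph{absence of convexity of $W$}: it is precisely the combination of oddness of $W'$ (enabling symmetrization) with the one-sided curvature bound $W''\ge -C_W$ that converts the $O(1)$ interaction into a quadratic form with coefficient strictly smaller than $2C_V$, keeping the Grönwall right-hand side contractive. This is exactly where the structural assumption $C_V-C_W>0$ enters. A secondary subtlety is that the $1/N$ rate in the empirical-of-i.i.d.\ Wasserstein term needs exponential moments of $\mu_T$ that are uniform in $T$; this is furnished by \cref{l: bound pi} together with the geometric $W_2$-convergence of $\mu_T$ to $\pi$, and is essential because without such moments one loses a logarithmic factor that would spoil the polynomial rates obtained later.
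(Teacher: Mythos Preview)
Your proposal is correct and matches the paper's approach. The paper does not give a self-contained proof of this theorem; it merely cites \cite{CMV03}, \cite{Mal03}, \cite{CatGuiMal08} and points to Proposition~\ref{prop: prop chaos L4}, whose proof (Section~\ref{s: proof technical}) is exactly your Part~1 argument written for general $p\ge 1$: the same synchronous coupling, the same three-way split of the drift into a confinement piece (handled via $V''\ge C_V$), an empirical-vs-empirical interaction piece (handled by symmetrization in $i\leftrightarrow j$, oddness of $W'$, the mean value theorem, and $W''\ge -C_W$), and an empirical-vs-law piece (handled by conditional independence and $\|W'\|_\infty<\infty$), followed by Gr\"onwall. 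Your three-term triangle decomposition for $W_1(\Pi_{N,T},\Pi)$ is also the route the paper implicitly uses, and it reappears explicitly in the proof of Theorem~\ref{th: dual formulation} for the Fourier-transform analogue.

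One small correction: your justification for uniform-in-$T$ exponential moments of $\mu_T$ (``transferred from $\pi$ via geometric $W_2$-convergence'') is not the right mechanism, since $W_2$-convergence alone does not propagate exponential integrability. The paper obtains this directly by a Gr\"onwall argument on the even moments $\E[|\overline X_t|^{2k}]$ (Lemma~\ref{l: moments}); invoking that lemma closes the gap in your middle-term bound.
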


\noindent The aforementioned bound asserts that the invariant distribution $\Pi$ of the mean field equation can be accurately approximated by the empirical measure $\Pi_{N,T}$, while providing an associated error bound for this approximation. In the upcoming estimation procedure all convergence rates will be measured in terms
of $N_T$. 
In the following, we will present and prove a similar result, but from a dual perspective, involving the Laplace transforms of $\Pi_{N,T}$ and $\Pi$.
We opt to replace the use of Fourier transform, as seen in \cite{BPP}, with the Laplace transform.  This choice is made with a similar intent as the authors of \cite{BelGol}, allowing for greater flexibility and the inclusion of diverse scenarios. \\
\\
Before we proceed further, let us now introduce some notation and properties concerning the Laplace transform. 
For any locally integrable function $\phi$, we define the bilateral Laplace transform as follows:
\begin{equation}{\label{eq: def laplace}}
\widehat{\phi}(z) 
:= \int_{\R} \phi(t) \exp(-z t) dt. 
\end{equation}
{\rev Moreover, we define the Fourier transform on $\mathbb{C}$ as follows: 
$$\mathcal{F}(\phi(z)) = \int_{\R} \phi(z) \exp(i zt) dt. $$}
The Laplace transform $\widehat{\phi}(z)$ is an analytic function within the convergence region $\Sigma_\phi$, which typically takes the form of a vertical strip in the complex plane:
$$\Sigma_\phi := \left \{ z \in \mathbb{C} \, : \, x_\phi^- \leq \operatorname{Re}(z) \leq x_\phi^+ \right \}$$
for some $x_\phi^-$, $x_\phi^+$ such that $- \infty < x_\phi^- \leq x_\phi^+ < \infty$. The convergence region $\Sigma_\phi$ can degenerate to a vertical line in the complex plane, in such case it is $\Sigma_\phi := \left \{ z \in \mathbb{C} \, : \, \operatorname{Re}(z) = x_\phi \right \}$, with $x_\phi \in \R$. 
If $\phi$ is a probability density, then the imaginary axis always belongs to the convergence region $\Sigma_\phi$. In this case the Fourier transform of $\phi$
$$\widehat{\phi}(-\imath y) = \mathcal{F}(\phi)(y) := \int_{\R} \phi(t) \exp(\imath y t) dt, \qquad y \in \R,$$
is the characteristic function of $\phi$. This degenerate case pertains to distributions whose characteristic function lacks the ability to be analytically extended to a strip surrounding the imaginary axis in the complex plane.

We are now ready to state a propagation of chaos type result for the Fourier transforms that is reminiscent of Theorem \ref{th: prop chaos} by means of the Kantorovich-Rubinstein theorem. In particular, we can demonstrate that the transform of $\Pi$ can also be effectively approximated by the transform of the empirical measure $\Pi_{N,T}$. The proof can be found in Section \ref{S: proof main}.

\begin{theorem}{\label{th: dual formulation}}
Under \cref{ass: beta}, there exists a constant $c > 0$ such that, for any $z \in \mathcal{L}_{\pm a}$, 
$$\E [ |\mathcal{F}(\Pi)(z)-\mathcal{F}(\Pi_{N, T})(z) |^2 ] \le c |z|^2 N_T^{-1}.$$    
\end{theorem}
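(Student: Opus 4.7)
The plan is to insert an intermediate empirical measure built from i.i.d.\ copies $\overline{X}^i$ of the mean-field limit \cref{eq: mck}, coupled to the $i$-th particle $X^{i,N}$ via the same Brownian motion and initial value, and to decompose
\begin{equation*}
\mathcal{F}(\Pi)(z) - \mathcal{F}(\Pi_{N,T})(z) = I^{(a)} + I^{(b)} + II,
\end{equation*}
where $I^{(a)} = \mathcal{F}(\Pi)(z) - \mathcal{F}(\mu_T)(z)$ is deterministic, $I^{(b)} = \mathcal{F}(\mu_T)(z) - \tfrac{1}{N}\sum_{i=1}^{N}\exp(iz\overline{X}_T^{i})$ is a Monte Carlo error, and $II = \tfrac{1}{N}\sum_{i=1}^{N}[\exp(iz\overline{X}_T^{i}) - \exp(izX_T^{i,N})]$ is the particle approximation error.

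The core pointwise estimate I rely on replaces the Lipschitz bound (which fails once $z$ has nonzero imaginary part): for $z = y + i\varepsilon a \in \mathcal{L}_{\pm a}$ with $\varepsilon \in \{\pm 1\}$ and any $x_1, x_2 \in \R$,
\begin{equation*}
|\exp(izx_1) - \exp(izx_2)| \le |z|\,|x_1 - x_2|\bigl(e^{-\varepsilon a x_1} + e^{-\varepsilon a x_2}\bigr),
\end{equation*}
obtained by writing $\exp(izx_1)-\exp(izx_2) = \int_{x_2}^{x_1} iz \exp(izt)\,dt$ and noting that $|\exp(izt)| = e^{-\varepsilon a t}$ is monotone, so that its integral over $[\min(x_1,x_2),\max(x_1,x_2)]$ is bounded by $|x_1-x_2|$ times the larger endpoint value. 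This is precisely the extension of Kantorovich--Rubinstein duality past the Lipschitz class which the paper's introduction advertises.

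For $II$, identity of distribution of the coupled pairs $(\overline{X}_T^i,X_T^{i,N})$ combined with Cauchy--Schwarz reduces $\E[|II|^2]$ to $\E[|\exp(iz\overline{X}_T^{1})-\exp(izX_T^{1,N})|^2]$; applying the displayed inequality and Cauchy--Schwarz again yields
\begin{equation*}
\E[|II|^2] \le |z|^2\bigl(\E[|\overline{X}_T^{1}-X_T^{1,N}|^4]\bigr)^{1/2}\bigl(\E[(e^{-\varepsilon a \overline{X}_T^{1}}+e^{-\varepsilon a X_T^{1,N}})^4]\bigr)^{1/2}.
\end{equation*}
The first factor is of order $N^{-1}$ by the $L^4$ propagation of chaos \cref{prop: prop chaos L4}; the second is bounded uniformly in $T$ and $N$ by the quadratic term in $V$, the Gaussian-type tail estimate in \cref{l: bound pi}, and the exponential moment hypothesis \eqref{eq: cond exp moments}. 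Hence $\E[|II|^2] \le C|z|^2 N^{-1}$. For $I^{(b)}$, the summands are i.i.d.\ and centered with $|\exp(iz\overline{X}_T^i)|^2 = e^{-2\varepsilon a\overline{X}_T^i}$, so $\E[|I^{(b)}|^2] \le C/N$. For the deterministic term $I^{(a)}$, I would couple $\overline{X}_T$ with a stationary version $\tilde X_T \sim \Pi$ driven by the same Brownian motion; Theorem~2.1(iv) of \cite{CMV03} yields $\E[|\overline{X}_T - \tilde X_T|^2] \le Ce^{-2\lambda T}$, whence the pointwise inequality above delivers $|I^{(a)}|^2 \le C|z|^2 e^{-2\lambda T} \le C|z|^2 N_T^{-1}$. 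Combining via $(a+b+c)^2 \le 3(a^2+b^2+c^2)$ and absorbing the unadorned $1/N$ into $|z|^2/N$ (using $|z|^2 \ge a^2$ on $\mathcal{L}_{\pm a}$) gives the claimed bound.

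The main obstacle is the uniform-in-$T$ (and $N$) control of the exponential moments $\E[e^{\mp c X_T^{1,N}}]$ and $\E[e^{\mp c\overline{X}_T^{1}}]$ needed for the Cauchy--Schwarz steps and for $I^{(b)}$: the invariant density has Gaussian-like tails by \cref{l: bound pi}, but transferring this to the dynamic marginals requires combining the exponential convergence to equilibrium with the initial exponential moment assumption \eqref{eq: cond exp moments}. Once such bounds are in hand, the remainder of the argument is essentially bookkeeping.
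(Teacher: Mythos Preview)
Your proposal is correct and essentially identical to the paper's proof: the same three-term decomposition (the paper's $\delta_1,\delta_2,\delta_3$), the same pointwise estimate $|\exp(izx_1)-\exp(izx_2)|\le |z|\,|x_1-x_2|(e^{-\varepsilon ax_1}+e^{-\varepsilon ax_2})$ (derived algebraically there rather than via the integral), the same use of \cref{prop: prop chaos L4} plus Cauchy--Schwarz for $II$, and the same i.i.d.\ variance bound for $I^{(b)}$. The ``main obstacle'' you flag---uniform-in-$T,N$ exponential moments---is exactly what the paper isolates and proves separately (their \cref{l: moments}, via a Gr\"onwall argument on even moments); the only cosmetic difference is that for $I^{(a)}$ the paper takes an optimal $W_2$-coupling and invokes \cite{Mal03}, whereas you propose a synchronous stationary coupling via \cite{CMV03}, both yielding the same $e^{-\lambda T}$ rate.
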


\begin{remark} \rm
Note that for $z \in \mathbb{R}$, this theorem is a direct implication of the Kantorovich-Rubinstein dual formulation, applicable to $1$-Lipschitz functions. However, when dealing with $z \in \mathcal{L}_{\pm a}$, the function $\exp(iz)$ no longer possesses Lipschitz continuity, thereby rendering the use of the Kantorovich-Rubinstein dual formulation unfeasible. This motivates us to establish this analogous formulation. \qed
\end{remark}

\noindent Some crucial results will be instrumental in the derivation of the theorem above. Specifically, we present an extension of the propagation of chaos theorem, as stated in Proposition \ref{prop: prop chaos L4} below, in the $L^{2p}$ norm, for $p \ge 1$. 
For the detailed proof of this result, we refer to Section \ref{s: proof technical}. \qed

\begin{proposition}{\label{prop: prop chaos L4}}
Under \cref{ass: beta} for any $p\ge 1$ there exists a constant $c > 0$ such that uniformly in $N$ and $i$,
$$\sup_{t \ge 0} \E\left[|X_t^{i,N} - \overline{X}_t^{i}|^{2p}\right] \le c N^{-p}.$$
\end{proposition}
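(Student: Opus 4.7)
The plan is to extend the synchronous coupling $L^2$ argument behind \cref{th: prop chaos} to an $L^{2p}$ energy estimate, relying crucially on a symmetrization trick that exploits the odd symmetry of $W'$. Let $Z^i_t := X^{i,N}_t - \overline{X}^i_t$. Since the driving Brownian motions coincide, $Z^i$ has no martingale part and $Z^i_0 = 0$; its drift decomposes into a confinement part $-(V'(X^{i,N}_t)-V'(\overline{X}^i_t))$, an interaction part $-\frac{1}{2N}\sum_j A^{ij}_t$ with $A^{ij}_t := W'(X^{i,N}_t-X^{j,N}_t) - W'(\overline{X}^i_t-\overline{X}^j_t)$, and a mean-field fluctuation $G^i_t := \frac{1}{2N}\sum_j W'(\overline{X}^i_t-\overline{X}^j_t) - \frac{1}{2}(W'\star\mu_t)(\overline{X}^i_t)$.

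I then differentiate $\sum_i |Z^i_t|^{2p}$. Using $V''\ge C_V$, the confinement contribution is dominated by $-2pC_V\sum_i|Z^i|^{2p}$. The main trick is the symmetrization of the interaction term: since $W'$ is odd one has $A^{ij}=-A^{ji}$, so with $f(x):=|x|^{2p-2}x$,
\begin{equation*}
\sum_{i,j}f(Z^i)A^{ij} = \tfrac{1}{2}\sum_{i,j}(f(Z^i)-f(Z^j))A^{ij} = \tfrac{1}{2}\sum_{i,j}W''(\xi^{ij})(f(Z^i)-f(Z^j))(Z^i-Z^j)
\end{equation*}
by the mean value theorem. Since $f$ is increasing, each factor $(f(Z^i)-f(Z^j))(Z^i-Z^j)$ is non-negative, and combined with $W''\ge -C_W$ this yields the lower bound $\sum_{i,j}f(Z^i)A^{ij}\ge -\tfrac{C_W}{2}\sum_{i,j}(f(Z^i)-f(Z^j))(Z^i-Z^j)$. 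A short Holder argument (applied to $\sum_i f(Z^i)$ and $\sum_j Z^j$) controls the right-hand sum by $4N\sum_i|Z^i|^{2p}$, so the interaction contribution is at most $2pC_W\sum_i|Z^i|^{2p}$, producing net contraction rate $-2p(C_V-C_W) = -2p\lambda<0$.

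For $G^i_t$, conditionally on $\overline{X}^i_t$ it is $\frac{1}{2N}$ times a sum of bounded mean-zero i.i.d.\ random variables, so the Marcinkiewicz-Zygmund (or Rosenthal's) inequality yields $\E[|G^i_t|^{2p}]\le c_p N^{-p}$. Setting $u_t := \E[|Z^i_t|^{2p}]$, which is independent of $i$ by exchangeability, and applying Holder to $\E[|Z^i|^{2p-1}|G^i|] \le u_t^{(2p-1)/(2p)} c_p N^{-1/2}$, I arrive at the differential inequality
\begin{equation*}
u_t' \le -2p\lambda u_t + 2pc_p N^{-1/2} u_t^{(2p-1)/(2p)}.
\end{equation*}
The substitution $v_t := u_t^{1/(2p)}$ linearizes this to $v_t' \le -\lambda v_t + c_p N^{-1/2}$; since $v_0 = 0$, Gronwall's lemma gives $v_t \le c_p/\lambda \cdot N^{-1/2}$ uniformly in $t$, equivalently $u_t \le c N^{-p}$ uniformly in $t$ and $N$, which is the claim.

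The hardest part is the symmetrization: without it the interaction term contributes a coefficient of order $\|W''\|_\infty$, which the confinement gain $C_V$ need not dominate since only $C_V>C_W$ is assumed, and the desired contraction would fail. The Holder-type control of the symmetric sum $\sum_{i,j}(f(Z^i)-f(Z^j))(Z^i-Z^j)$ by a constant independent of both $p$ and $N$ is the other delicate ingredient that makes the argument close.
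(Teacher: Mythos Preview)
Your proof is correct and follows essentially the same route as the paper: the same three-term decomposition of the drift, the same use of $V''\ge C_V$ for contraction, the same symmetrization of the interaction term via the oddness of $W'$ combined with $W''\ge -C_W$, the same $N^{-p}$ moment bound on the mean-field fluctuation, and the same Bernoulli-type differential inequality linearized by the substitution $v_t=u_t^{1/(2p)}$. The only cosmetic differences are that you symmetrize pathwise (using $A^{ij}=-A^{ji}$) whereas the paper symmetrizes after taking expectation via exchangeability, and you invoke Rosenthal/Marcinkiewicz--Zygmund for $\E[|G^i_t|^{2p}]$ whereas the paper does the equivalent direct combinatorial count of non-vanishing index configurations.
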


\section{Statistical Framework and Main Results}{\label{s: main}}

\subsection{The Estimation Procedure}
Suppose we observe the data $X_T^{1,N}, \ldots , X_T^{N,N}$ in the asymptotic framework where both $N$ and $T$ tend towards infinity. Our goal is to estimate the interaction function $W'$. In particular, we will propose an estimator $W'_{N,T}$ and study its performance by considering the associated mean integrated squared error, aiming to achieve polynomial convergence rates. 

The estimation procedure is semiparametric in the sense that it consists of four different steps, involving both parameter estimation and nonparametric estimation techniques.
\begin{enumerate}
    \item The first step consists in the estimation of the derivative of the log-density which we denote as $l(y)$: 
    $$l(y) := (\log \pi)'(y) = \frac{\pi'(y)}{\pi(y)}, \qquad y \in \R.$$
    This will be achieved by the introduction of kernel estimators for both $\pi$ and $\pi'$. Let $K$ be a smooth kernel of order $m\ge 2$, that is 
$$\int_{\R} K(x)\, dx = 1, \qquad \int_{\R} x^j K(x)\, dx = 0, \quad j=0,\ldots, m-1, \qquad \int_{\R} x^m K(x)\, dx \neq 0.$$
It is worth noting the choice of the kernel order, denoted by $m$, is flexible and can be determined by the statistician. As we will see later on, the choice of $m$ is determined by the regularity of $V$: a smooth confinement potential allows us to choose an arbitrary $m\in\mathbb N$. On the other hand, if $V$ is non-smooth
as described in \cref{ass: beta}, A2, we need to additionally assume that $m \le J$. Indeed, this is a standard restriction on the order of the kernel when estimating non-smooth functions.
Let us also introduce the bandwidths $h_i := h_{i,N,T}$, $i=0,1$, which satisfy $h_i\to 0$ for $N, T$ going to $\infty$ and the notation $K_h(x)=\frac{1}{h} K(\frac{x}{h})$. 
Then, we can define the kernel estimators $\pi_{N,T}$ and $\pi_{N,T}'$ for $\pi$ and $\pi'$, respectively, as below:
$$\pi_{N,T}(y) := \frac{1}{N} \sum_{i =1}^N K_{h_0} (y - X_T^{i,N}), \qquad \pi'_{N,T}(y) := \frac{1}{N h_1} \sum_{i =1}^N K'_{h_1} (y - X_T^{i,N}), \quad y \in \R. $$
An estimator for the derivative of the log-density $l(y)$ is then given by 
$$l_{N,T}(y) := \frac{\pi'_{N,T}(y)}{\pi_{N,T}(y)} 1_{\{\pi_{N,T}(y) > \delta\}},$$
where $\delta = \delta_{N,T}\to 0$ as $N, T \rightarrow \infty$. 
\item In the second step, we estimate the parameter $\alpha > 0$ appearing in the confinement potential. This is based on the identity 
\begin{equation*}
    l(y) = - 2\alpha y - 2\widetilde{V}'(y) - W' \star \pi(y)
\end{equation*}
and on a contrast function method. Indeed, since $W' \in L^1(\R)$, we know that $\abs{W' \star \pi(y)} \rightarrow 0$ as $|y| \rightarrow \infty$, which allows us to construct a minimal contrast estimator for $\alpha$. In particular, for any $\epsilon \in (0,1)$ arbitrarily small, we can introduce an integrable weight function $w$ with support on $[\epsilon, 1]$ and a parameter $U = U_{N,T}$, which satisfies $U\to \infty$ for $N, T \rightarrow \infty$. Then, we can define the estimator $\alpha_{N,T}$ for $\alpha$ as
\begin{equation*}
    \alpha_{N,T} :=  \underset{\alpha \in \R}{\operatorname{argmin}} \int_{\R} \left(l_{N,T}(y) + 2 \alpha y +  2\widetilde{V}'(y)\right)^2 w_U(y)\, dy,
\end{equation*}
{\rev where $w_U(\cdot) := (1/U)w(\cdot/U)$.}
\item Using the results in the previous step we can construct an estimator $\Psi_{N,T}$ of $\Psi:= - W' \star \pi$. Indeed, given the estimators $l_{N,T}$ and $\alpha_{N,T}$ constructed above, we can define
$$\Psi_{N,T}(y) := \left(l_{N,T}(y) + 2\alpha_{N,T} \, y + 2\widetilde{V}'(y)\right)1_{\{|y| \le \epsilon U\}}, \qquad y \in \R.$$
\item The last step consists in applying the deconvolution and the inverse Laplace transform to obtain an estimator for $W^\prime$. Note that because we want to consider values of $\mathcal{F}(\Pi)$ in the strip of analyticity (e.g. to escape zeros of $\mathcal{F}(\Pi)$), we can not use the standard regularization techniques of deconvolution problems consisting  of cutting off the estimates outside bounded intervals on real line, see \cite{JJ09}. Instead, we are going to employ Tikhonov-type regularization.    More specifically,  for some $a\ge0$, choose a sequence of entire functions $\rho_{N, T}$ such that for $z \in \mathcal{L}_{\pm a}$, $\left|\mathcal{F}\left(\Pi_{N, T}\right)(z)+\rho_{N, T}(z)\right| \geq \varepsilon_{N, T}>0$, where $\varepsilon_{N, T} \rightarrow 0$ for $N,T \rightarrow \infty$. Then, we define the estimator $W_{N, T}^{\prime}$ via
\begin{equation}
\label{eq:tikhonov-reg}
\mathcal{F}\left(W_{N, T}^{\prime}\right)(z):=-\frac{\mathcal{F}\left(\Psi_{N, T}\right)(z)}{\mathcal{F}\left(\Pi_{N, T}\right)(z)+\rho_{N, T}(z)} .
\end{equation}
Observe that the right hand side is well-defined since $\Psi_{N,T}\in L^1(\R) \cap L^2(\R)$. Additionally, for all $z\in \mathcal{L}_{\pm a}$, $|\mathcal{F} (W_{N, T}^{\prime} )(z)|\le \varepsilon_{N,T}^{-1} \abs{\mathcal F(\Psi_{N,T})(z)}$, such that $W_{N, T}^{\prime}$, as the inverse Fourier transform of the right hand side, is well-defined.
\end{enumerate}

\begin{remark} \rm
The use of Tikhonov-type regularization in \eqref{eq:tikhonov-reg}, instead of more common hard-thresholding-type regularizations in deconvolution problems, is motivated by the fact that we want to consider the estimate \(\mathcal{F}\left(W_{N, T}^{\prime}\right)\) on the complex plane. This, in turn, makes it possible to weaken our assumptions on the zeros of \(\mathcal{F}\left(\Pi\right)\) and, in particular, allows for zeros on the real line.  For example, the Fourier transform of the density $\pi(x)\propto\exp(-\sum_{k=1}^p c_k x^{2k})$ with $c_k> 0$ corresponding to polynomial potential $W$, has only real zeros for all natural $p>1$ (see Theorem~20 in \cite{deBruijn}) and hence it is reasonable to consider integration contours $\mathcal{L}_a$ with $a>0$ in this situation. We refer to \cite{BelGol} for a similar approach, where the problem of singularities in the deconvolution estimates was overcome by considering general integration contours in the complex plane when computing the inverse Fourier transform. In fact, Tikhonov regularization, also known as ridge regularization, is a commonly used technique to stabilize the solution of inverse problems, including deconvolution problems, see \cite{ridgedeconv}. 
 \qed
\end{remark}

\noindent It is natural to draw a comparison between our proposed estimation approach with the one presented in \cite{BPP}, particularly in scenarios where the interacting drift exhibits polynomial tails. Although the overall steps in the estimation process share similarities, our context introduces several novel considerations.

Our estimation procedure can be divided into two parts, depending on whether we are addressing the first case with infinite smoothness, where $\widetilde{V} = 0$, or the less smooth case where $\widetilde{V}$ adheres to condition A2. The model distinction arises from the absence of the potential function $V(x)$ in \cite{BPP}. Instead, the interaction potential in \cite{BPP} comprises two components: the potential, encompassing trigonometric and polynomial functions, and the non-parametric component of $W$. Despite this difference, it does not significantly impact the estimation procedure. The parametric component in \cite{BPP} plays a role similar to the confinement potential in our context, and both are estimated through a contrast function, yielding comparable results in Steps 1 and 2 for both cases.

The deviation becomes evident in Step 3, where the constraint on exponential tails of $W'$ results in polynomial convergence rates (see Theorem \ref{th: estim psi}), a contrast to the logarithmic convergence rates for $\Psi$ in \cite{BPP} due to polynomial tails of $W'$.

The divergence continues into the fourth and final step, where the estimation procedure takes on entirely different forms. The primary challenge lies in analyzing the joint decay of the transforms of $W'$ and $\pi$, introducing the condition 
$$
\int_{\mathcal{L}_a} \left| \frac{\mathcal{F} (W^{\prime} )(z)}{\mathcal{F}(\Pi)(z)} \right|^2 \,dz<\infty.
$$ Analyzing such a condition proves to be a complex task. Notably, the analysis hinges on studying the zeros of the transform of $\pi$, leading us to use the Laplace transform instead of the Fourier transform. It is worth noting that selecting $a=0$ in $\mathcal{L}_a$ allows obtaining the Fourier transform from the Laplace transform. A comprehensive explanation regarding the fulfillment of the mentioned constraint can be found in Section \ref{s: transform}.

\subsection{Main Results: Convergence Rates}
{\noindent Let us start with the first step, which consists in the estimation of $l$. We remark again that the order of the kernel can be chosen arbitrarily in the case of $\widetilde V=0$, whereas for $\widetilde V$ as in \cref{ass: beta}, A2, we require the condition $m \le J$. In the sequel the bandwidth $h_0$ for $\pi_{N, T}$ is chosen as 
\begin{equation}{\label{eq: h0}}
h_0 := N_T^{-\frac{1}{2(m + 1)}}.
\end{equation}
Similarly, for the estimation of $\pi_{N, T}$ we choose
\begin{equation}{\label{eq: h1}}
h_1: = N_T^{-\frac{1}{2(m + 2)}}.
\end{equation}
Finally, the threshold parameter is chosen as 
\begin{equation}{\label{eq: delta}}
\delta := \frac{c_1}{2} \exp(- \widetilde{C} U^2),
\end{equation}
where $c_1$, $\widetilde{C}$ are the constants appearing in Lemma \ref{l: bound pi}. 
We remark that the choice of $h_0$ and $h_1$ is the same as in \cite{BPP} while the choice of $\delta$ is due to our lower bound on $\pi$ as presented in Lemma \ref{l: bound pi}. 

\begin{proposition}{\label{prop: bound l}}
Let $h_0$, $h_1$ and $\delta$ be as above and let $U \ge 1$. Assume that \cref{ass: beta} holds. Then it is 
$$\sup_{|x| \le U} \E\left[|l_{N, T}(x) - l(x)|^2\right] \lesssim \exp (2 \widetilde{C} U^2) \left( N_T^{- \frac{2m}{2(m + 2)}} + U N_T^{- \frac{2m}{2(m + 1)}}\right).$$
\end{proposition}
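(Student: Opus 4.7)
The plan is to decompose the pointwise squared error of the ratio estimator into contributions from the marginal errors of $\pi_{N,T}$ and $\pi'_{N,T}$, and then exploit the threshold $\delta$ together with the lower bound on $\pi$ from \cref{l: bound pi} to handle the event where the indicator in the definition of $l_{N,T}$ kills the estimator. The choice $\delta=(c_1/2)\exp(-\widetilde C U^2)$ is crucial: it ensures $\pi(x)\ge 2\delta$ for $|x|\le U$.

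On the event $\mathcal A(x):=\{\pi_{N,T}(x)>\delta\}$ I would use the decomposition
$$
\frac{\pi'_{N,T}(x)}{\pi_{N,T}(x)} - \frac{\pi'(x)}{\pi(x)} = \frac{\pi'_{N,T}(x)-\pi'(x)}{\pi_{N,T}(x)} - l(x)\,\frac{\pi_{N,T}(x)-\pi(x)}{\pi_{N,T}(x)}.
$$
Bounding $\pi_{N,T}(x)^{-2}\le \delta^{-2}$ on $\mathcal A(x)$ and observing that $l(x)=-2V'(x)-W'\star\pi(x)$ has at most polynomial growth in $|x|$ (since $V'(x)=\alpha x + \widetilde V'(x)$ with $\widetilde V''$ bounded, and $\|W'\star\pi\|_\infty<\infty$), this yields
$$
\E\bigl[|l_{N,T}(x)-l(x)|^2\one_{\mathcal A(x)}\bigr] \lesssim \delta^{-2}\bigl(\E[(\pi'_{N,T}(x)-\pi'(x))^2] + l(x)^2\,\E[(\pi_{N,T}(x)-\pi(x))^2]\bigr).
$$
On the complementary event, $l_{N,T}(x)=0$, and since $\pi(x)\ge 2\delta$ for $|x|\le U$, we have $\mathcal A(x)^c\subseteq\{|\pi_{N,T}(x)-\pi(x)|\ge \delta\}$; Chebyshev then produces a bound of the same structural form, dominated by $\delta^{-2} l(x)^2 \E[(\pi_{N,T}(x)-\pi(x))^2]$.

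Next I would establish pointwise kernel risks of the type $\E[(\pi_{N,T}(x)-\pi(x))^2]\lesssim h_0^{2m}+\text{lower order}$ and $\E[(\pi'_{N,T}(x)-\pi'(x))^2]\lesssim h_1^{2m}+\text{lower order}$. The squared biases follow from the order-$m$ property of $K$ and the derivative bounds on $\pi$ from \cref{l: bound pi} (valid for $m\le J$). For the stochastic contributions I would insert the i.i.d.\ McKean--Vlasov copies $\overline X_T^i$ from \cref{th: prop chaos} and write
$$
\pi_{N,T}(x)-\pi(x) = \bigl[\pi_{N,T}(x)-\overline\pi_{N,T}(x)\bigr] + \bigl[\overline\pi_{N,T}(x)-\E\overline\pi_{N,T}(x)\bigr] + \bigl[\E\overline\pi_{N,T}(x)-\pi(x)\bigr],
$$
where the middle piece is the standard variance of order $(Nh_0)^{-1}$, the last combines kernel smoothing with the exponential convergence to equilibrium of the McKean--Vlasov marginal, and the first is the propagation-of-chaos correction. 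Plugging in the prescribed bandwidths $h_0=N_T^{-1/(2(m+1))}$, $h_1=N_T^{-1/(2(m+2))}$ then makes $h_0^{2m}=N_T^{-m/(m+1)}$ and $h_1^{2m}=N_T^{-m/(m+2)}$ the leading terms, and combined with $\delta^{-2}\asymp \exp(2\widetilde C U^2)$ and the polynomial bound on $l(x)^2$, this recovers the announced rate.

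The main obstacle lies in the propagation-of-chaos correction for the derivative estimator $\pi'_{N,T}$, where the Lipschitz constant of $K'_{h_1}$ scales as $h_1^{-3}$: a naive bound using the Wasserstein-$1$ control of \cref{th: prop chaos} would produce an $h_1^{-6}N_T^{-1}$ contribution that dominates the bias. Overcoming this requires a first-order Taylor expansion of $K'_{h_1}$ around $\overline X_T^i$ together with the higher-moment estimate $\E[|X_T^{i,N}-\overline X_T^i|^{2p}]\lesssim N^{-p}$ from \cref{prop: prop chaos L4} to absorb the quadratic remainder; this is the technical heart of the argument.
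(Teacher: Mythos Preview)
Your outer skeleton --- the on/off decomposition with respect to $\mathcal A(x)$, the Chebyshev bound on $\mathcal A(x)^c$ using $\pi(x)\ge 2\delta$ on $|x|\le U$, and the linear growth $|l(x)|\lesssim U$ --- matches the paper exactly. The substantive divergence is in how you propose to control $\E[(\pi_{N,T}(x)-\pi(x))^2]$ and its derivative analogue. You go through a three-term decomposition via the i.i.d.\ McKean--Vlasov copies $\overline X_T^i$, separating propagation of chaos, i.i.d.\ variance, and bias/equilibrium. The paper instead keeps only two terms,
\[
\pi_{N,T}(x)-\pi(x)=\underbrace{K_{h_0}\star(\Pi_{N,T}-\Pi)(x)}_{r_0(x)}+\underbrace{K_{h_0}\star\Pi(x)-\pi(x)}_{r_1(x)},
\]
and bounds $r_0$ directly by the Kantorovich--Rubinstein dual formulation together with $\E[W_1^2(\Pi_{N,T},\Pi)]\lesssim N_T^{-1}$ from \cref{th: prop chaos}. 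There is no intermediate i.i.d.\ step, no separate i.i.d.\ variance term, and no call to \cref{prop: prop chaos L4} anywhere in the argument. Because the Kantorovich--Rubinstein bound is uniform in $x$, it immediately yields the sup-norm estimates $\E[\|\pi_{N,T}-\pi\|_\infty^2]$ and $\E[\|\pi'_{N,T}-\pi'\|_\infty^2]$ that feed into the rest of the proof.

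This bears directly on the ``obstacle'' you flag. In the paper's route the propagation-of-chaos correction and the fluctuation are absorbed together into $W_1(\Pi_{N,T},\Pi)$; no Taylor expansion around $\overline X_T^i$ is ever performed. Your proposed fix for the derivative estimator is in fact incomplete: a first-order Taylor expansion plus the $L^{2p}$ moments from \cref{prop: prop chaos L4} does control the quadratic remainder (its squared $L^2$ contribution is of order $h_1^{-8}N^{-2}$, which is harmless), but the \emph{linear} term
\[
\frac{1}{N}\sum_{i=1}^N \frac{1}{h_1^3}\,K''\!\Bigl(\frac{x-\overline X_T^i}{h_1}\Bigr)\bigl(X_T^{i,N}-\overline X_T^i\bigr)
\]
still carries the full $h_1^{-3}$ scaling. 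The coefficients are $O(h_1^{-3})$, and since $X_T^{i,N}-\overline X_T^i$ is neither centred given $\overline X_T^i$ nor independent across $i$, there is no cancellation to exploit; Cauchy--Schwarz returns exactly the naive rate $h_1^{-6}N^{-1}$ that you were trying to avoid. The paper's two-term decomposition sidesteps this entirely by never introducing the i.i.d.\ copies in the first place.
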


\noindent We now proceed to estimate $\alpha$ in Step 2, employing the estimator $\alpha_{N, T}$. In our current context, this step is less challenging than it was in the previous work cited as \cite{BPP}, thanks to the specific model under consideration.
In particular, the estimation of $\alpha$ essentially involves simplifying Step 2 from \cite{BPP} to the scalar case, where an additional potential $\widetilde{V}$ has been introduced. However, this potential is already known, and it can be chosen in a way such that it does not contribute to the convergence rate at this stage of the analysis.
}

\begin{theorem}{\label{th: estim psi}}
Let $U \ge 1$ and recall that $m$ is the order of the kernel $K$. If \cref{ass: beta} holds then, for any $\epsilon \in (0,1)$,
{\begin{align}{\label{eq: bound psi}}
{\rev \left( \E  \left[ \int_\R |\Psi_{N, T} (y) - \Psi (y)|^2 dy \right] \right)^\frac{1}{2}} & \lesssim \exp(\widetilde{C} U^2) U^{\frac{1}{2}} \left(N_T^{- \frac{m}{2(m + 2)}} + U N_T^{- \frac{m}{2(m + 1)}}\right)  \\
& + \frac{ 2\exp(- C_V (\epsilon \frac{U}{2})^2)}{\epsilon U} +  \left( \int_{|y| > \frac{\epsilon U}{2}} |W'(y)|^2 dy \right)^\frac{1}{2}.  \nonumber 
\end{align} }
\end{theorem}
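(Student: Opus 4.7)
The plan is to exploit the identity $\Psi(y) = l(y) + 2\alpha y + 2\widetilde V'(y)$, which follows directly from the expression \cref{eq: pi} for the invariant density (differentiating $\log \pi$). Combined with the definition of $\Psi_{N,T}$, this gives the pointwise decomposition
$$
\Psi_{N,T}(y) - \Psi(y) = \bigl((l_{N,T}(y)-l(y)) + 2(\alpha_{N,T}-\alpha)\,y\bigr)\, 1_{\{|y|\le \epsilon U\}} - \Psi(y)\, 1_{\{|y|> \epsilon U\}}.
$$
Applying Minkowski's inequality in $L^2(\R)$ then splits the claim into three independent contributions: a kernel error for $l$, a parametric error for $\alpha$, and a truncation error for $\Psi$ itself.

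For the kernel contribution I would integrate the uniform pointwise bound of \cref{prop: bound l} over the interval $\{|y|\le \epsilon U\}$. Since $\epsilon U\le U$, the supremum bound applies on the whole interval, and the extra factor $2\epsilon U$ appearing from the integration produces, after taking the square root, exactly the $\sqrt{U}\exp(\widetilde C U^2)(N_T^{-m/(2(m+2))} + \sqrt{U}\,N_T^{-m/(2(m+1))})$ term of the statement.

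For the parametric contribution I would exploit the closed-form first-order condition of the contrast $\int (l_{N,T}+2\alpha y+2\widetilde V')^2 w_U\, dy$, namely
$$
\alpha_{N,T} - \alpha = -\,\frac{\int_\R y\,(l_{N,T}(y)-l(y))\,w_U(y)\,dy}{2\int_\R y^2 w_U(y)\,dy} + \frac{\int_\R y\,\Psi(y)\,w_U(y)\,dy}{2\int_\R y^2 w_U(y)\,dy}.
$$
Since $w_U(\cdot)=(1/U)w(\cdot/U)$ is supported on $[\epsilon U,U]$, the denominator is of order $U^2$, the stochastic numerator is controlled by $U\cdot \sup_{|y|\le U}|l_{N,T}(y)-l(y)|$ via \cref{prop: bound l}, and the deterministic numerator is small since $w_U$ is supported far from the origin where $\Psi=-W'\star\pi$ already decays (by the same splitting argument outlined below for the tail term). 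Multiplying the resulting bound on $\E[(\alpha_{N,T}-\alpha)^2]$ by $\int_{|y|\le \epsilon U}4y^2\, dy \lesssim U^3$ and taking the square root reproduces the second summand in the statement.

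For the tail contribution I would write $\Psi(y)=-\int_\R W'(y-x)\pi(x)\,dx$ and split at $|x|=\epsilon U/2$. On $\{|x|\le \epsilon U/2\}$, the condition $|y|>\epsilon U$ forces $|y-x|>\epsilon U/2$, so the convolution is bounded in $L^2(\R)$ by Young's inequality by $\|W'\,1_{|\cdot|>\epsilon U/2}\|_{L^2}\,\|\pi\|_{L^1}$, yielding the $(\int_{|y|>\epsilon U/2}|W'|^2)^{1/2}$ term. On $\{|x|>\epsilon U/2\}$, Young's inequality together with the Gaussian upper bound of \cref{l: bound pi} and standard Mills-ratio estimates gives $\|\pi\,1_{|\cdot|>\epsilon U/2}\|_{L^1}\lesssim \exp(-C_V(\epsilon U/2)^2)/(\epsilon U)$, which, multiplied by $\|W'\|_{L^2}\lesssim\|W'\|_\infty^{1/2}\|W'\|_{L^1}^{1/2}<\infty$, produces the middle term. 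The principal difficulty in the above plan is the careful bookkeeping in the $\alpha$-step, specifically verifying that the deterministic bias piece $\int y\,\Psi(y)w_U(y)\,dy$ is indeed dominated by the same quantities appearing on the right-hand side of \cref{eq: bound psi}; this requires reusing the convolution splitting twice.
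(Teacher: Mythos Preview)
Your proposal is correct and follows essentially the same route as the paper: the same three-term Minkowski decomposition, the same closed-form first-order condition for $\alpha_{N,T}$, and the same convolution splitting at $|x|=\epsilon U/2$ for the tail of $W'\star\pi$ (the paper phrases the latter via the integral Minkowski inequality rather than Young's, but the resulting bounds coincide). Your remark that the deterministic bias in the $\alpha$-step forces a second use of the convolution splitting is precisely what the paper does through its term $J_2$.
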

\noindent The dependence of the convergence rate for $\Psi_{N,T}$ on the tail behavior of the function $W'$ is evident in Theorem \ref{th: estim psi}. In particular, when the tails of $W'$ exhibit exponential decay, the subsequent corollary, which directly follows from the previous result, provides a precise bound.

\begin{corollary}{\label{cor: rate psi}}
In the setting of previous theorem, let $p > {C_V}$ and assume that 
\begin{equation}
\label{eq: cond tails beta}
\limsup_{x\to\infty} \exp(2 px^2)\int_{|y|>x} |W'(y)|^2 \, dy < \infty.
\end{equation}
Then, for any $\epsilon \in (0,1)$, choosing $U^2=c_u\log(N_T)$, where
\begin{equation}{\label{eq: def cu}}
c_u = \frac{m}{2(m + 2)} \frac{1}{(\widetilde{C} + C_V \frac{\epsilon^2}{4})}
\end{equation}
gives
\begin{equation}{\label{eq: pol psi}}
{\rev \left( \E  \left[ \int_\R |\Psi_{N, T} (y) - \Psi (y)|^2 dy \right] \right)^\frac{1}{2}} \lesssim (\log N_T)^{\frac14}N_T^{-\gamma},
\end{equation}
where
\begin{equation}{\label{eq: gamma}}
    \gamma=\frac{m}{2(m+2)} \frac{1}{1 + \frac{4 \widetilde{C}}{\epsilon^2 C_V}}.
\end{equation}

\end{corollary}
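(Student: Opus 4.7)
The plan is to plug the prescribed choice $U^2 = c_u \log N_T$ into the bound of Theorem~\ref{th: estim psi} and check term-by-term that each summand is controlled by $(\log N_T)^{1/4} N_T^{-\gamma}$. Since the choice of $c_u$ is tailored so that the two competing exponential factors balance out, the verification reduces to elementary algebra with exponents.

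First I would convert the stochastic tail into a polynomial factor. By assumption \eqref{eq: cond tails beta}, for all large $U$,
\begin{equation*}
\Bigl(\int_{|y|>\epsilon U/2}|W'(y)|^2\,dy\Bigr)^{1/2} \lesssim \exp(-p\,\epsilon^2 U^2/4) = N_T^{-p\epsilon^2 c_u/4},
\end{equation*}
and since $p>C_V$, this exponent strictly exceeds $C_V \epsilon^2 c_u/4$. So the third summand in \eqref{eq: bound psi} decays faster than the deterministic tail term $\exp(-C_V(\epsilon U/2)^2)/(\epsilon U) \lesssim N_T^{-C_V\epsilon^2 c_u/4}/U$. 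A short calculation with $c_u$ from \eqref{eq: def cu} shows
\begin{equation*}
\frac{C_V\epsilon^2 c_u}{4} \;=\; \frac{m}{2(m+2)}\cdot \frac{C_V\epsilon^2/4}{\widetilde C + C_V\epsilon^2/4} \;=\; \gamma,
\end{equation*}
so both the second and the third summands are $O(N_T^{-\gamma})$.

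For the stochastic main term, I would split the factor $N_T^{-m/(2(m+2))} + U N_T^{-m/(2(m+1))}$ and inspect each contribution separately after multiplication by $\exp(\widetilde C U^2)U^{1/2} = U^{1/2}N_T^{\widetilde C c_u}$. For the first piece,
\begin{equation*}
\widetilde C c_u - \frac{m}{2(m+2)} = \frac{m}{2(m+2)}\Bigl(\frac{\widetilde C}{\widetilde C + C_V\epsilon^2/4}-1\Bigr) = -\gamma,
\end{equation*}
which is exactly the choice dictating $c_u$; this yields $U^{1/2}N_T^{-\gamma} \lesssim (\log N_T)^{1/4}N_T^{-\gamma}$. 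For the second piece, since $1/(2(m+2))<1/(2(m+1))$ the exponent $\widetilde C c_u - m/(2(m+1))$ is strictly smaller than $-\gamma$, so the extra factor $U^{3/2}$ only introduces a harmless polylogarithmic correction.

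I do not anticipate any real obstacle here, as the corollary is essentially a bookkeeping exercise optimizing $U$ in the bound of Theorem~\ref{th: estim psi}. The only conceptual point worth highlighting is that $c_u$ is chosen as the unique value equating the two exponents $\widetilde C c_u - m/(2(m+2))$ and $-C_V\epsilon^2 c_u/4$, which explains the precise form of $\gamma$ in \eqref{eq: gamma}. Collecting the three contributions yields \eqref{eq: pol psi}.
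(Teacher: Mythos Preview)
Your proposal is correct and follows essentially the same route as the paper's proof: both substitute $U^2=c_u\log N_T$ into the bound of Theorem~\ref{th: estim psi}, use \eqref{eq: cond tails beta} to absorb the $W'$-tail into the $\exp(-C_V(\epsilon U/2)^2)$ term via $p>C_V$, and verify that the choice of $c_u$ balances the exponent $\widetilde C c_u - m/(2(m+2))$ against $-C_V\epsilon^2 c_u/4$ to produce $-\gamma$. Your write-up is in fact slightly more explicit about the algebra (the identities $C_V\epsilon^2 c_u/4=\gamma$ and $\widetilde C c_u - m/(2(m+2))=-\gamma$) than the paper's.
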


\begin{remark} \rm
One might question why the convergence rate above depends on the auxiliary parameter $\epsilon \in (0,1)$ introduced in the second step of our estimation procedure. By following the proof of Theorem \ref{th: estim psi}, it is easy to verify that when the value of $\alpha$ is known and does not require pre-estimation, the results still hold with $\epsilon = 1$. However, when estimating the parameter $\alpha$, we lose the option of setting $\epsilon = 1$ and can only use $\epsilon \in (0,1)$, which slightly affects our convergence rate. The optimal choice is to take $\epsilon$ as close to $1$ as possible, resulting in a convergence rate $\gamma$ equal to $$\frac{m}{2(m+2)} \frac{1}{1 + \frac{4 \widetilde{C}}{C_V }} - \tilde{\epsilon}$$ for any arbitrarily small $\tilde{\epsilon}$. \qed
\end{remark}
 The estimation of $\Psi$ leads us to the estimation of $W'$ as stated in the following theorem. In this context, an assumption regarding the decay of the transforms of $\pi$ and $W'$ will be crucial.
\begin{assumption}
    \label{ass: FTs} Recall that $\mathcal{L}_a = \left \{ y + \imath a : y \in \R \right \}$ for some $a \ge 0$.
    There exists an $\overline{a} \ge 0$ such that
    \begin{equation} \label{eq: ass transforms}
        \int_{\mathcal{L}_{\pm \overline{a}}} \left|\frac{\mathcal{F}\left(W^{\prime}\right)(z)}{\mathcal{F}(\Pi)(z)}\right|^2 d z<\infty.
    \end{equation}
\end{assumption}

\noindent We will further analyze this assumption in \cref{s: transform}. In such section, we will employ tools from complex analysis to establish a lower bound on $\mathcal{F}(\Pi)(z)$, allowing us to verify that \cref{ass: FTs} holds true in specific situations. Additionally, at the conclusion of Section \ref{s: transform}, we provide an example illustrating how a non-smooth confinement potential implies polynomial decay in the transform of the invariant density. This example assists us in verifying the validity of \cref{ass: FTs} as mentioned earlier. \\
\\
In the sequel, $\varepsilon_{N, T}$ is chosen as 
$$\varepsilon_{N, T}= \exp\left(\frac{a \epsilon}{2}(c_u \log N_T)^\frac{1}{2}\right) (\log N_T)^\frac{1}{4} N_T^{- \frac{\gamma}{2}}.$$
From a quick look at Theorem \ref{th: estim beta}, it is easy to see that it provides the final convergence rate for the estimation of $W'$.

\begin{theorem}{\label{th: estim beta}}
 Assume that \cref{ass: beta} and \cref{ass: FTs} hold true for some ${\rev \bar{a}} \ge 0$. Then
\begin{align*}
 {\rev \left( \E\left[ \int_{\R} |W^{\prime}_{N,T}(y) - W^{\prime}(y) |^2 dy\right] \right)^\frac{1}{2}} \le c \exp\left(\frac{{\rev \bar{a}} \epsilon}{2}(c_u \log N_T)^\frac{1}{2}\right) (\log N_T)^\frac{1}{4} N_T^{- \frac{\gamma}{2}},
\end{align*}
where $c_u$ is as in \eqref{eq: def cu} and $\gamma$ as in \eqref{eq: gamma}.
\end{theorem}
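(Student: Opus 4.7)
The strategy is to reduce the $L^2$ error of the deconvolution estimator to integrals of the Fourier transforms along the shifted contours $\mathcal{L}_{\pm \bar a}$, where the Tikhonov bound $|\mathcal F(\Pi_{N,T})+\rho_{N,T}|\ge \varepsilon_{N,T}$ is available, and then use the plug-in identity $\mathcal F(\Psi)=-\mathcal F(W')\mathcal F(\Pi)$ to linearize the error around the truth. More concretely, I would start by observing that $1\le \tfrac12(e^{2\bar a x}+e^{-2\bar a x})$, so together with the Paley--Wiener/Plancherel identity
$$
\int_{\mathcal L_{\pm\bar a}}|\mathcal F(f)(z)|^2\,|dz|=2\pi\,\|f\,e^{\mp \bar a\cdot}\|_{L^2}^2
$$
one gets
$$
\|W'_{N,T}-W'\|_{L^2}^2\lesssim \int_{\mathcal L_{\bar a}}|\mathcal F(W'_{N,T}-W')(z)|^2|dz|+\int_{\mathcal L_{-\bar a}}|\mathcal F(W'_{N,T}-W')(z)|^2|dz|.
$$
This is the analogue, in the complex plane, of the contour trick used in \cite{BelGol}, and it is exactly what converts the deconvolution into a problem where the denominator is bounded from below.

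Next, because $\Psi=-W'\star \pi$ implies $\mathcal F(\Psi)=-\mathcal F(W')\mathcal F(\Pi)$, a direct manipulation of \eqref{eq:tikhonov-reg} gives the decomposition
$$
\mathcal F(W'_{N,T})(z)-\mathcal F(W')(z)=-\frac{\mathcal F(\Psi_{N,T}-\Psi)(z)+\mathcal F(W')(z)\,\mathcal F(\Pi_{N,T}-\Pi)(z)+\mathcal F(W')(z)\,\rho_{N,T}(z)}{\mathcal F(\Pi_{N,T})(z)+\rho_{N,T}(z)}.
$$
I would then bound the three summands separately on $\mathcal L_{\pm\bar a}$ using $|\mathcal F(\Pi_{N,T})+\rho_{N,T}|\ge\varepsilon_{N,T}$. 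For the stochastic error in $\Psi$, Plancherel together with the compact support $[-\epsilon U,\epsilon U]$ of $\Psi_{N,T}$ (and the Gaussian decay of $\Psi$, which follows from Lemma \ref{l: bound pi} and $W'\in L^1$) yields
$$
\int_{\mathcal L_{\pm\bar a}}|\mathcal F(\Psi_{N,T}-\Psi)|^2|dz|\lesssim e^{2\bar a \epsilon U}\,\|\Psi_{N,T}-\Psi\|_{L^2}^2+\text{(negligible Gaussian tail)},
$$
after which Corollary \ref{cor: rate psi} produces exactly the $e^{\bar a \epsilon \sqrt{c_u\log N_T}}(\log N_T)^{1/2}N_T^{-\gamma}/\varepsilon_{N,T}^2$ contribution that matches the target rate after substitution of $\varepsilon_{N,T}$. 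For the linearization in $\Pi$, I would write $|\mathcal F(W')|^2=|\mathcal F(W')/\mathcal F(\Pi)|^2\,|\mathcal F(\Pi)|^2$; Theorem \ref{th: dual formulation} gives a factor $|z|^2 N_T^{-1}$, and $|z \mathcal F(\Pi)(z)|=|\mathcal F(\pi')(z)|$ is bounded on $\mathcal L_{\pm \bar a}$ thanks to Lemma \ref{l: bound pi}, so the $|z|^2$ is absorbed and Assumption \ref{ass: FTs} makes the remaining integral finite. The resulting bound $N_T^{-1}/\varepsilon_{N,T}^2$ is of smaller order than the stochastic term.

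The main obstacle is the regularization bias $\mathcal F(W')\rho_{N,T}/(\mathcal F(\Pi_{N,T})+\rho_{N,T})$: a naive $|\rho_{N,T}|/|\mathrm{denom}|\le 1$ gives no decay. To handle it I would choose $|\rho_{N,T}|\lesssim \varepsilon_{N,T}$ and split the contour into a low-frequency region $|z|\le Z$, where $|\mathcal F(\Pi)|$ is bounded below (so the ratio is $\lesssim \varepsilon_{N,T}$ and the square integral over $|z|\le Z$ of $|\mathcal F(W')|^2$ is finite), and a high-frequency region, where the decomposition $|\mathcal F(W')|^2=|\mathcal F(W')/\mathcal F(\Pi)|^2|\mathcal F(\Pi)|^2$ combined with the smallness of $|\mathcal F(\Pi)|$ and Assumption \ref{ass: FTs} delivers decay. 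This is precisely the role played by Assumption \ref{ass: FTs}; without it, the Tikhonov bias would not be controllable in $L^2$.

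Collecting the three bounds and taking expectations, the stochastic term dominates. Inserting $U^2=c_u\log N_T$ and the prescribed $\varepsilon_{N,T}=\exp\bigl(\tfrac{\bar a\epsilon}{2}(c_u\log N_T)^{1/2}\bigr)(\log N_T)^{1/4}N_T^{-\gamma/2}$ balances the $e^{2\bar a\epsilon U}/\varepsilon_{N,T}^2$ variance inflation against the $\Psi$-rate from Corollary \ref{cor: rate psi}, yielding
$$
\E\|W'_{N,T}-W'\|_{L^2}^2\lesssim e^{\bar a \epsilon\sqrt{c_u\log N_T}}(\log N_T)^{1/2}N_T^{-\gamma},
$$
which, after taking square roots, is the bound stated in Theorem \ref{th: estim beta}.
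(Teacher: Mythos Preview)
Your contour shift, Plancherel reduction, and treatment of the $\Psi$-error along $\mathcal L_{\pm\bar a}$ are correct and match the paper. What you miss is the paper's key device, revealed only in the proof: the specific choice $\rho_{N,T}(z)=\mathcal F(\Pi)(z)-\mathcal F(\Pi_{N,T})(z)+\varepsilon_{N,T}$. With this choice the denominator becomes $\mathcal F(\Pi_{N,T})+\rho_{N,T}=\mathcal F(\Pi)+\varepsilon_{N,T}$, which is \emph{deterministic}. Your second and third numerator terms then collapse, since $\mathcal F(W')\bigl(\mathcal F(\Pi_{N,T})-\mathcal F(\Pi)\bigr)+\mathcal F(W')\rho_{N,T}=\varepsilon_{N,T}\,\mathcal F(W')$. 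The paper now uses two different lower bounds on the same denominator: $|\mathcal F(\Pi)+\varepsilon_{N,T}|\ge\varepsilon_{N,T}$ for the stochastic $\Psi$-term, and $|\mathcal F(\Pi)+\varepsilon_{N,T}|\ge|\mathcal F(\Pi)|$ (via the side hypothesis $\operatorname{Re}\mathcal F(\Pi)>0$ on $\mathcal L_a$) for the bias. This yields directly
\[
\Delta_a\;\lesssim\;\varepsilon_{N,T}^{-2}\!\int_{\mathcal L_a}\!|\mathcal F(\Psi_{N,T}-\Psi)|^2\,dz\;+\;\varepsilon_{N,T}^{2}\!\int_{\mathcal L_a}\!\Bigl|\tfrac{\mathcal F(W')}{\mathcal F(\Pi)}\Bigr|^2dz,
\]
and Assumption~\ref{ass: FTs} disposes of the second integral with no frequency splitting. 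In particular, Theorem~\ref{th: dual formulation} is never invoked here.

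The gap in your argument is the regularization bias. With a generic $\rho_{N,T}$ satisfying only $|\rho_{N,T}|\lesssim\varepsilon_{N,T}$ and $|\mathcal F(\Pi_{N,T})+\rho_{N,T}|\ge\varepsilon_{N,T}$, the ratio $|\rho_{N,T}|/|\mathcal F(\Pi_{N,T})+\rho_{N,T}|$ is merely $\le 1$, so the bias contributes $\int_{\mathcal L_{\bar a}}|\mathcal F(W')|^2$, a constant. Your low/high-frequency split does not rescue this: on $|z|\le Z$ you need the \emph{random} denominator bounded below by something of order $|\mathcal F(\Pi)(z)|$, which requires an additional event argument and deteriorates as $Z\to\infty$; on $|z|>Z$ you recover only $\sup_{|z|>Z}|\mathcal F(\Pi)|^2$, which forces $Z\to\infty$. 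Balancing these two regimes gives, at best, a bias of order $\varepsilon_{N,T}$ rather than $\varepsilon_{N,T}^{2}$; optimizing against the stochastic term then yields a final rate $\lambda_{N,T}^{1/3}$ instead of the paper's $\lambda_{N,T}^{1/2}$. The oracle-type regularizer above is precisely what sidesteps this loss.
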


\begin{remark} \rm
Observe that when \cref{ass: FTs} holds for ${\rev \bar{a}}=0$ 
the convergence rate found in Theorem \ref{th: estim beta} is $(\log N_T)^\frac{1}{4} N_T^{- \frac{\gamma}{2}}$. Thanks to Hadamard representation we will see in Section \ref{s: transform} we can obtain such result if $\mathcal{F}(\Pi)(z)$ does not have zeros on the real line (see Theorem \ref{th: lower bound} below).  
In Theorem \ref{th: estim beta}, we consider a more general case, where we allow the function $\mathcal{F}(\Pi)(z)$ to have real zeros. The crucial condition is indeed that there exists at least one line parallel to the real axis on which such function does not have any zeros. Let us stress that this condition is much weaker then one requiring no zeros of $\mathcal{F}(\Pi)(z)$ on the real axis (${\rev \bar{a}}=0$) since  there is a large class of densities with Fourier transforms vanishing only on the real line, see, e.g. \cite{bruijn}. We refer to Section~\ref{s: transform} for more details. \qed
\end{remark}

\begin{remark} \rm
Even when \cref{ass: FTs} holds for ${\rev \bar{a}} \neq 0$, the observed convergence rate, as determined in the aforementioned theorem, remains polynomial. This outcome arises from the dominance of the polynomial term over the exponential term in Theorem \ref{th: estim beta}. \qed

\end{remark}

\begin{remark} \rm
It is important to highlight that the convergence rate outlined in Theorem \ref{th: estim beta} is contingent upon the smoothness of the confinement potential $\widetilde{V}$. Specifically, in the scenario of smooth potentials, we have the flexibility to set the parameter $m$ to be arbitrarily large. Additionally, it's noteworthy to mention that $\epsilon \in (0,1)$ with the optimal choice being in close proximity to $1$. This choice results in ${\gamma}$ being close to $1/(2(1+4 \widetilde{C}/C_V))$. 

In contrast, when dealing with a confinement potential of smoothness $J$, we encounter the constraint $m \le J$. In this case, the optimal choice is to set $m = J$, leading to ${\gamma}$ being close to $J/(2(J+2)(1+4 \widetilde{C}/C_V))$.
    \qed
\end{remark}

{\rev \subsection{A Lower Bound} \label{secLowe}

In this section, we establish a lower bound for our statistical problem in the non-smooth setting. To this end, we consider a simplified statistical model described as follows:
\[
X_1,\ldots, X_N \text{ are  i.i.d. } \sim \pi, \qquad  \pi (x) = 
\frac{1}{Z_{\pi}} \exp \left( - 2V(x) - W \star \pi (x) \right).
\]
We denote the corresponding law by $\mathbb P^{\otimes N}_{V,W}$. 
We assume that the confinement potential $V$ is fixed and takes the form $V(x)=\alpha x^2/2 +\widetilde{V}(x)$, where $\alpha>0$ is a known constant, and $\widetilde{V}$ satisfies Assumption A2. Next, we introduce the functional class $\mathcal{A}_{r,p,J}$, $r=(r_1,\ldots, r_6)\in \R_{>0}^6$ 
that satisfies \cref{ass: beta} with $\|W'\|_{\infty}<\sqrt{C_V-C_W}$ and further
\begin{align*}
&\inf_{x\in \mathbb{R}} W''(x) \in [-r_1,-r_2], \qquad \limsup_{x\to \infty}  \exp(2px^2) \int_{|y|>x} |W'(y)|^2 dy \leq r_3, \\[1.5 ex]
&\int_{\mathbb{R}} \left| \frac{\mathcal{F}(W')(z)}{\mathcal{F}(\pi)(z)} \right|^2 dz  \leq r_4,
\qquad r_5(1\wedge |z|^{-J-2})\leq |\mathcal{F}(\pi)(z)| \leq r_6(1\wedge |z|^{-J-2}),
\end{align*} 
for $p>0$. We remark that the space $\mathcal{A}_{r,p,J}$ is non-empty. In particular, existence of confinement potential $V$,  which guarantees the validity of the last condition on $\mathcal{F}(\pi)$, is shown in Example  
\ref{exmp}. The main result of this section is the following statement.

\begin{theorem} \label{ThLowe}
For some $(p,r)\in \R_{>0} \times \R_{>0}^6$ there exists a constant $c_0>0$ such that
\[
\inf_{W'_N} \sup_{W\in \mathcal{A}_{r,p,J}} \mathbb P^{\otimes N}_{V,W} \left(
\|W'_N - W'\|_{L^2(\R)} >c_0 N^{-1/4}\right) >0,
\]
where the infimum is taken over all estimators $W'_N$.  
\end{theorem}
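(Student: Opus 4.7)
The plan is to prove Theorem \ref{ThLowe} via Le Cam's two-point method. I would construct two potentials $W_0, W_1 \in \mathcal{A}_{r,p,J}$ such that $\|W_0' - W_1'\|_{L^2(\R)} \geq 4 c_0 N^{-1/4}$ while the chi-squared divergence between the associated invariant densities satisfies $\chi^2(\pi_0,\pi_1) \leq c/N$. Tensorising gives $\chi^2(\P_{V,W_0}^{\otimes N}, \P_{V,W_1}^{\otimes N}) \leq (1+c/N)^N - 1$, which is bounded, so the total variation between the two product measures stays bounded away from one, and Le Cam's inequality yields a uniformly positive probability of error for any test. Combined with the reduction from estimation to testing, this gives the stated minimax lower bound at the rate $N^{-1/4}$.

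For the construction, I would take $W_0$ to be a fixed base potential in the class (its existence, together with $|\mathcal{F}(\pi_0)(z)| \asymp 1 \wedge |z|^{-J-2}$, is guaranteed by Example \ref{exmp}) and set $W_1 := W_0 + \phi_N$ where $\phi_N(x) = a_N \omega_N^{-1} \cos(\omega_N x)\, \eta(x/L)$ is a high-frequency even bump with a fixed smooth even compactly supported $\eta$ and tunable parameters $(a_N, \omega_N, L)$. A Plancherel computation gives $\|\phi_N'\|_{L^2}^2 \asymp a_N^2 L$, while the frequency localisation of $\phi_N$ together with the polynomial decay of $\mathcal{F}(\pi_0)$ yields $\|\phi_N \star \pi_0\|_{L^2}^2 \lesssim a_N^2 L\, \omega_N^{-2J-4}$. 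Choosing $\omega_N \asymp N^{1/(4J+8)}$ and $a_N^2 L \asymp N^{-1/2}$ simultaneously enforces $\|\phi_N'\|_{L^2} \asymp N^{-1/4}$ and $\|\phi_N \star \pi_0\|_{L^2}^2 \lesssim 1/N$. By keeping $a_N$ small, one verifies that the perturbations of $\|W_1'\|_\infty$, $\inf W_1''$, the exponential tail bound on $W_1'$, and the ratio $|\mathcal{F}(W_1')/\mathcal{F}(\pi_1)|$ all stay within the class, and that $\pi_1$ inherits the non-smooth structure of $\widetilde V$ so that its Fourier-decay window is preserved up to slightly enlarging $r_5, r_6$.

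The main obstacle is converting the Fourier bound $\|\phi_N \star \pi_0\|_{L^2}^2 \lesssim 1/N$ into the required $\chi^2(\pi_0,\pi_1) \lesssim 1/N$ through the self-consistent equation $\pi_i = Z_i^{-1} \exp(-2V - W_i \star \pi_i)$. I would linearise: writing $\delta U := \phi_N \star \pi_1 + W_0 \star (\pi_1-\pi_0)$, the expansion $\pi_1 - \pi_0 \approx -\pi_0 (\delta U - \E_{\pi_0}[\delta U])$ yields $\chi^2(\pi_0,\pi_1) \approx \mathrm{Var}_{\pi_0}(\delta U)$. Resolving the implicit dependence of $\delta U$ on $\pi_1 - \pi_0$ calls for a fixed-point or Neumann-series argument for $\mathrm{Id} + K_0$ with $K_0 \eta := \pi_0 W_0 \star \eta$, whose invertibility and boundedness in $L^2(\pi_0^{-1})$ should follow from the coercivity implied by $C_V - C_W > 0$ together with the two-sided Gaussian-type control of $\pi_0$ in Lemma \ref{l: bound pi}; this eventually gives $\mathrm{Var}_{\pi_0}(\delta U) \lesssim \|\phi_N \star \pi_0\|_{L^2}^2$. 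Making this expansion uniformly valid while only assuming $\inf W_0'' > -C_W$ rather than convexity of $W_0$ is the delicate technical step.
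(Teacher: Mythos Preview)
Your approach is correct and follows the same two-point strategy as the paper, but differs in two implementation choices worth noting.

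The paper also perturbs a fixed base function by a high-frequency oscillation at frequency $m\asymp N^{1/(4J+8)}$ and amplitude $\delta=N^{-1/4}$, exactly your scaling. However, it takes $W_0'(x)\propto e^{-x^2/2}\sin x$ and $W_1'-W_0'=f_{\delta,m}(x)\propto \delta\,e^{-x^2/2}\sin(mx)$: the Gaussian envelope makes the Fourier transforms explicit shifted Gaussians, so the class conditions (including $\int|\mathcal F(W_k')/\mathcal F(\pi_k)|^2\le r_4$) and the convolution bound are checked by one-line computations. Your compactly supported bump works too, but the bookkeeping is heavier---and your stated exponent $\omega_N^{-2J-4}$ for $\|\phi_N\star\pi_0\|_{L^2}^2$ should actually be $\omega_N^{-2J-6}$ (the prefactor $a_N\omega_N^{-1}$ contributes an extra $\omega_N^{-2}$); this only strengthens the bound, so nothing breaks.

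The more substantive difference is how the self-consistency $\pi_i=Z_i^{-1}\exp(-2V-W_i\star\pi_i)$ is handled. You propose a Neumann-series linearisation of $\pi_1-\pi_0$ and flag the invertibility of $\mathrm{Id}+K_0$ as the delicate step. The paper bypasses this entirely: it works with the Kullback--Leibler divergence and invokes \cite[Theorem~5.1]{BPP}, which under the extra hypothesis $\|W'\|_\infty<\sqrt{C_V-C_W}$ (built into the definition of $\mathcal A_{r,p,J}$) gives directly
\[
K(\mathbb P_1^{\otimes N},\mathbb P_0^{\otimes N})\ \lesssim\ N\int_{\R}(f_{\delta,m}\star\pi_0)^2\,\pi_1\,dx\ \lesssim\ N\|f_{\delta,m}\star\pi_0\|_{L^2}^2\ \lesssim\ N\delta^2 m^{-2(J+2)}\ \lesssim\ 1,
\]
after which Tsybakov's two-hypothesis theorem finishes. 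So the paper's route is shorter but relies on that external KL inequality, whereas yours is self-contained but leaves the fixed-point/linearisation step as a genuine obligation to be carried out.
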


\noindent
We recall that the convergence rate of our estimator introduced in the previous section is given by $N^{-\gamma/2}$ (up to log terms) with 
\[
\gamma= \frac{J}{2(J+2)(1+4\widetilde{C}/C_V)}.
\]
While the latter does not match the lower bound of Theorem \ref{ThLowe}, the rate $N^{-\gamma/2}$ is getting close to $N^{-1/4}$ for large $J$ and $C_V$. }

\section{Proof of the Main Results}{\label{S: proof main}}
Let us introduce some notation and properties of the Laplace transform, as defined in \eqref{eq: def laplace}, that will be useful in the following section. 
The inverse Laplace transform is given by the following formula 
$$\phi(t) = \frac{1}{2 \pi \imath} \int_{x - \imath \infty}^{x + \imath \infty} \widehat{\phi} (z) \exp(zt) dz = \frac{1}{2 \pi} \int_{- \infty}^{\infty} \widehat{\phi}(x + \imath y)\exp((x + \imath y)t) dy,$$
with $x \in (x_\phi^-, x_\phi^+)$. The bilateral Laplace transform is unique in the sense that if $\widehat{\phi}_1$ and $\widehat{\phi}_2$ are such that $\widehat{\phi}_1 (z) = \widehat{\phi}_2 (z)$ in a common strip of convergence $\operatorname{Re}(z) \in (x_{\phi_1}^-, x_{\phi_1}^+) \cap (x_{\phi_2}^-, x_{\phi_2}^+)$, then $\phi_1(t) = \phi_2(t)$ for almost all $t\in\R$ (see Theorem 6b in \cite{Wid46}). \\
\\
Moreover, the following identities will prove to be useful. Let $\phi$ be a locally integrable function and $a>0$. By Parseval's theorem, (see Theorem 31.7 in \cite{Doe})
\begin{align*}
\| \phi \|_2^2 
&= \frac{1}{2\pi \imath} \int_{a-\imath\infty}^{a+\imath\infty} 
\overline{\mathcal{F}(\phi)(-\imath \overline s)} \mathcal{F} (\phi)(\imath s) d s\\
&= \frac{1}{2\pi} \int_{-\infty}^\infty \overline{\mathcal{F}(\phi)(- \imath a - y) } \mathcal{F} (\phi) (\imath a - y) d y.
\end{align*}
Then, 
\begin{align*}
\| \phi \|^2_2 &\lesssim \int_{-\infty}^\infty | \overline{\mathcal{F}(\phi)(- \imath a - y) } \mathcal{F} (\phi) (\imath a - y) | d y \lesssim \Delta_- +\Delta_+,
\end{align*}
where 
\begin{align*}
\Delta_- &= \int_{-\infty}^\infty  | \overline{\mathcal{F}(\phi)(- \imath a - y) } |^2 d y = \int_{\mathcal{L}_{-a}} |\mathcal{F}(\phi)(z)|^2 d z,\\ 
\Delta_+ &= \int_{-\infty}^\infty |\mathcal{F}(\phi)(\imath a-y)|^2 dy = \int_{\mathcal{L}_a} | \mathcal{F} (\phi) (z) |^2 d z,
\end{align*}
and $\mathcal{L}_a = \{y + \imath a: y \in \mathbb{R} \}$. \\

\subsection{Proof of Theorem \ref{th: dual formulation}}
As it will be helpful for the proof of Theorem \ref{th: dual formulation}, we now state a lemma establishing that finite exponential moments of $\overline X^1_0=X^{1,N}_0=X^i_0$ imply finite exponential moments of $\overline X^1_t$ and $X^{1,N}_t$ for all $t \ge 0$. Its proof can be found in Section \ref{Sec6.3}. \\
Remark 
that the sub-Gaussianity implies the required
finite exponential moments \eqref{eq: cond exp moments}. Indeed, a centered random variable $Z$ is sub-Gaussian if there exists a $K > 0$ such that $\E[\exp(t Z)] \le \exp(K^2 t^2/2)$ for all $t \in \mathbb{R}$. For sufficient conditions ensuring sub-Gaussianity see Definition 30 in \cite{MaeHof} and references therein.

\begin{lemma}{\label{l: moments}}
Under \cref{ass: beta}, we have 
$$\sup_{t \ge 0}\mathbb{E} \left[ \exp (\pm c \overline X_t) + \exp(\pm c X^{i,N}_t) \right] < \infty$$
uniformly in $i$ and $N$. Additionally, for any $k \ge 1$,
\begin{equation}{\label{eq: bounded moments}}
\sup_{t \ge 0}\mathbb{E} \left[ |\overline X_t|^k + |X^{i,N}_t|^k \right] < \infty
\end{equation}
uniformly in $i$ and $N$.
\end{lemma}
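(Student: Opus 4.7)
The plan is to construct a Lyapunov function and apply Itô's formula together with a Gronwall-type argument. Since the bound has to hold uniformly in $N$, $i$ and $t$, and for every $c\in\R$, the key will be to exploit the strong convexity of $V$ (which gives $|V'(x)|\gtrsim C_V|x|$ for $|x|$ large), the pointwise boundedness of $W'$, and the hypothesis \eqref{eq: cond exp moments} on the initial law.

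For any fixed $c>0$, the plan is to take as Lyapunov function $\phi(x):=\cosh(cx)$, which is $C^2$, even, and satisfies $\phi''=c^2\phi$, $|\phi'(x)|\le c\phi(x)$, while $\phi'(x)=c\sinh(cx)$ has the same sign as $x$. The generator acting on $\phi$ for the $i$-th coordinate of the particle system reads
\begin{equation*}
\mathcal L^{i,N}\phi(x) = -V'(x)\phi'(x) + \tfrac12\phi''(x) - \frac{1}{2N}\sum_{j=1}^N W'(x-X^{j,N}_t)\phi'(x).
\end{equation*}
Using $V''\ge C_V>0$ one checks that $V'(x)\phi'(x)\ge c\sinh(c|x|)(C_V|x|-|V'(0)|)$, while the interaction term is pointwise bounded by $(c\|W'\|_\infty/2)\phi(x)$ uniformly in $N$ and in the positions of the other particles. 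Combining this with $\tfrac12\phi''(x)=(c^2/2)\phi(x)$ and choosing $R=R(c)$ large enough so that $-cC_V|x|\sinh(c|x|)/2$ dominates all the other contributions on $\{|x|\ge R\}$, we obtain a pointwise dissipative estimate
\begin{equation*}
\mathcal L^{i,N}\phi(x) \le C(c) - \phi(x), \qquad x\in\R,
\end{equation*}
with $C(c)>0$ independent of $N$, $i$ and of the configuration of the other particles. The same estimate applies to the McKean--Vlasov generator $\mathcal L\phi(x) = -V'(x)\phi'(x)+\tfrac12\phi''(x)-\tfrac12(W'\star\mu_t)(x)\phi'(x)$, since $|(W'\star\mu_t)(x)|\le\|W'\|_\infty$.

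Introducing the stopping times $\tau_n := \inf\{t\ge 0 : |X^{i,N}_t|\ge n\}$, Itô's formula applied to $\phi(X^{i,N}_{t\wedge\tau_n})$ combined with the dissipative bound above yields, after taking expectations (the stochastic integral is a true martingale up to $\tau_n$),
\begin{equation*}
\E[\phi(X^{i,N}_{t\wedge\tau_n})] \le \E[\phi(X^i_0)] + C(c)\,t - \int_0^t \E[\phi(X^{i,N}_{s\wedge\tau_n})]\,ds.
\end{equation*}
Gronwall's lemma then produces the bound $\E[\phi(X^{i,N}_{t\wedge\tau_n})]\le\max\{\E[\phi(X^1_0)],C(c)\}$ uniformly in $n$, $t$, $N$ and $i$, and Fatou's lemma as $n\to\infty$ transfers it to $\E[\phi(X^{i,N}_t)]$; the analogous argument gives the same bound for $\overline X_t$. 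Finally, $\exp(\pm cx)\le 2\phi(x)$ yields the exponential moment estimate, while $|x|^k\le k!\,c^{-k}\phi(x)$ upgrades it to the polynomial moment bound \eqref{eq: bounded moments} for every $k\ge 1$.

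The main obstacle is the uniformity in $N$: the drift of the $i$-th particle involves an empirical average coupling all $N$ coordinates, but the boundedness of $W'$ allows this contribution to be dominated pointwise by $(c\|W'\|_\infty/2)\phi(X^{i,N}_t)$ independently of the other particles, so the Lyapunov inequality is genuinely coordinate-wise. The application of Itô's formula to the unbounded $\phi$ is then handled by the standard localization argument described above, and the finiteness of $\E[\phi(X_0^1)]$ for every $c>0$ is precisely the content of the first part of \eqref{eq: cond exp moments}.
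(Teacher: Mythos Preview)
Your Lyapunov approach via $\phi(x)=\cosh(cx)$ is correct and takes a different route from the paper's proof. The paper applies Itô's formula to each monomial $x^{2k}$ separately, derives a Gronwall bound $m_t(2k):=\E[|X_t^{i,N}|^{2k}]\le\max(m_0(2k),B(k))$ with explicit $B(k)\sim C(k)^{2k}$ for $C(k)\sim((2k-1)/(2C_V\varepsilon))^{1/2}$, and then checks convergence of $\sum_{k}c^{2k}B(k)/(2k)!$ via the ratio test to recover the exponential moment from the power-series identity for $\cosh$. Your argument is more direct: the single dissipative inequality $\mathcal L\phi\le C(c)-\phi$ packages all moments at once and avoids both the moment-by-moment bookkeeping and the convergence test. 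The paper's method, by contrast, yields finer information on the growth of each individual moment in $k$, though this is not needed for the lemma.

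One technical point deserves care. The displayed inequality
\[
\E[\phi(X^{i,N}_{t\wedge\tau_n})]\le\E[\phi(X_0^i)]+C(c)\,t-\int_0^t\E[\phi(X^{i,N}_{s\wedge\tau_n})]\,ds
\]
is not literally what Itô gives: after stopping and taking expectations the integrand is $\E[\one_{\{s<\tau_n\}}\phi(X^{i,N}_s)]$, which differs from $\E[\phi(X^{i,N}_{s\wedge\tau_n})]$ by $\phi(n)\,\P(s\ge\tau_n)\ge0$, and this extra term enters with the wrong sign for your inequality. The standard remedy is either to apply Itô to $e^t\phi(X^{i,N}_t)$ (so the drift becomes $\le C(c)e^t$ and no Gronwall is needed after localization), or to first drop the negative integral to get the crude bound $\E[\phi(X^{i,N}_{t\wedge\tau_n})]\le\E[\phi(X_0^i)]+C(c)t$, deduce local-in-time finiteness of $t\mapsto\E[\phi(X^{i,N}_t)]$ via Fatou, pass $n\to\infty$ in the correct integral inequality by monotone convergence, and only then apply Gronwall. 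With either fix your conclusion stands.
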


\noindent

\noindent
We start proving Theorem \ref{th: dual formulation} by introducing some notation. Let $\overline{\Pi}_t$ be the law of $\overline{X}_t$, $\Pi$ the law with density $\pi$ and $\Pi^N_T=\Pi_{N,T}$ with 
 \begin{align*}
\mathcal{F}(\Pi^N_T)(z) &= \frac{1}{N} \sum_{j=1}^N \exp(\imath z X_T^{j,N}), \qquad \mathcal{F}(\overline \Pi^N_T)(z) = \frac{1}{N} \sum_{j=1}^N \exp(\imath z \overline X_T^j),\\
\mathcal{F}(\overline \Pi_T)(z) &= \mathbb{E} [ \exp(\imath z \overline X_T) ], \qquad \mathcal{F}(\Pi) (z) = \mathcal{F}(\pi) (z) = \int_{-\infty}^\infty \exp(\imath z x) \pi (x) d x. 
\end{align*}
In order to prove the result we now consider the following decomposition:  
$$
\mathbb{E} [  | \mathcal{F}(\Pi) (z) - \mathcal{F}(\Pi^N_T) (z) |^2 ] \lesssim \sum_{i=1}^3 \delta_i(z), 
$$
where
\begin{align*}
 \delta_1(z) = | \mathcal{F}(\Pi) (z) - \mathcal{F}(\overline \Pi_T) (z) |^2,\qquad
  \delta_2(z) = \mathbb{E} \left[ |\mathcal{F}(\overline \Pi_T) (z) - \mathcal{F}(\overline \Pi^N_T) (z) |^2 \right],
\end{align*}
and
\begin{align*}
   \delta_3(z) &= \mathbb{E} \left[ |\mathcal{F}(\overline \Pi^N_T) (z) - \mathcal{F}(\Pi^N_T) (z) |^2 \right]\\
&\le \left( \frac{1}{N} \sum_{j=1}^N \mathbb{E} \left[ |\exp(\imath z\overline X_T^j) - \exp(\imath z X_T^{j,N}) |^2\right]^{\frac{1}{2}} \right)^2\\ 
&= \mathbb{E} \left[ |\exp(\imath z \overline X^1_T) - \exp(\imath z X_T^{1,N}) |^2 \right].
\end{align*}
For $x = x_1 + \imath x_2$ and $y = y_1 + \imath y_2$, we have that
\begin{align}{\label{eq: bound dual}}
\left| \exp(x+y) - \exp(y) \right| 
&=  \left| \exp(x) - 1 \right| \exp(y_1) \nonumber \\
&\le \left( \left| \exp(\imath x_2) - 1  \right| \exp(x_1)  + \left| \exp(x_1) - 1 \right| \right) \exp(y_1) \nonumber\\
&\le (|x_2| \exp(x_1) + |x_1| (\exp(x_1) + 1)) \exp(y_1)\nonumber \\ 
&\le \sqrt{2} |x| (\exp(x_1+y_1) + \exp(y_1)) 
\end{align}
which implies, for any $z \in \mathcal{L}_a$,
$$
\delta_3(z) \le 2 |z|^2 \mathbb{E} \left[ | \overline X^1_T- X^{1,N}_T |^2 (\exp(- a \overline X^1_T) + \exp(-a X^{1,N}_T) )^2 \right].
$$
Using the Cauchy-Schwarz inequality, we get
$$
\delta_3(z) \lesssim |z|^2 \mathbb{E} \left[ | \overline X^1_T- X^{1,N}_T |^4\right]^\frac{1}{2},
$$
because exponential moments of $\overline X^1_T$ and $X^{1,N}_T$ are uniformly in $T,N$ finite according to Lemma \ref{l: moments}. By Proposition \ref{prop: prop chaos L4},
$$\mathbb{E} \left[ | \overline X^1_T- X^{1,N}_T |^4 \right] \lesssim \frac{1}{N^2},$$
which yields
\begin{equation}{\label{eq: delta3}}
\delta_3(z) \lesssim \frac{|z|^2}{N}.
\end{equation}
Next, consider $\delta_2(z)$. Since $\overline \Pi^N_T$ is based on i.i.d.\ $\overline X^1_T, \dots, \overline X^N_T$ with common law $\overline \Pi_T$, we have that
\begin{equation}{\label{eq: delta2}}
\delta_2(z) = \frac{1}{N} \mathbb{E} \left[ | \mathbb{E} [\exp(\imath z \overline X^1_T)]-\exp(\imath z \overline X^1_T) |^2 \right] 
\lesssim \frac{1}{N},  
\end{equation}
where exponential moments of $\overline X^1_T$ are uniformly in $T$ finite 
by Lemma \ref{l: moments}.
Finally, let us deal with $\delta_1(z)$. We introduce a random vector $(\overline X_T,X)$ with $\mathcal{L}(\overline X_T) = \overline \Pi_T$, $\mathcal{L}(X) = \Pi$ which attains $\mathbb{E}[|\overline X_T - X|^2] = W_2^2(\overline \Pi_T, \Pi)$. Then, using again \eqref{eq: bound dual}, it is
\begin{align*}
\delta_1 (z)^\frac{1}{2} &= |\mathbb{E}[\exp(\imath z \overline X_T)] - \mathbb{E} [\exp(\imath zX) ] |\\ 
&\le \mathbb{E} [ |\exp(\imath z \overline X_T) - \exp(\imath z X) | ] \\
& \le \sqrt{2} |z| \mathbb{E} [ |\overline X_T - X | 
(\exp(-a\overline X_T) + \exp(-aX)) ]
\end{align*}
Using the Cauchy-Schwarz inequality we obtain 
\begin{align*}
&\mathbb{E} [ |\overline X_T - \overline X |  (\exp(-a\overline X_T) + \exp(-a X) ) ] 
\lesssim W_2(\overline \Pi_T, \Pi) 
\end{align*}
since the moments and the exponential moments of $\overline X_T$, $X$ are uniformly in $T$ finite by Lemma \ref{l: moments} and the fact that $\mathcal{L}(X)=\Pi$ is the invariant law of the McKean-Vlasov stochastic differential equation.
Theorem 1.4 in \cite{Mal03} ensures that there exist two constants $c$ and $\lambda := C_V-C_W > 0$ such that 
$$W_2(\overline \Pi_T, \Pi) \le c \exp(- \lambda T).$$
It yields that
\begin{equation}{\label{eq: delta1}}
\delta_1(z) \lesssim |z|^2 \exp(- 2\lambda T). 
\end{equation}
From \eqref{eq: delta3}, \eqref{eq: delta2} and \eqref{eq: delta1} we get, for any $z \in \mathcal{L}_a$,
$$\mathbb{E} [ |\mathcal{F}(\Pi) (z) - \mathcal{F}(\Pi^N_T) (z) |^2 ] \lesssim |z|^2\left(\frac{1}{N} + \exp(- \lambda T)\right) = \frac{|z|^2}{N_T}. $$
Our reasoning easily extends to $z \in \mathcal{L}_{-a}$, thereby concluding the proof of the theorem.

\subsection{Proof of Proposition \ref{prop: bound l}}
The proof of Proposition \ref{prop: bound l} follows closely the proof of Proposition 4.1 in \cite{BPP}. 
Recall that $l_{N,T}(x)$ is a kernel estimator of 
\begin{equation}\label{eq:lrepr}
l(x) = \frac{\pi'(x)}{\pi (x)} = -2 V'(x) - W' \star \pi (x).
\end{equation}
Let us decompose its error into the sum 
\begin{equation}\label{ineq:ldiff}
|l_{N,T}(x)-l(x)| = |l(x)| \one_{\{\pi_{N,T}(x) \le \delta\}} + r(x)    
\end{equation}
where
\begin{align}
r(x) &:= | l_{N,T} (x)  - l (x) | \one_{\{\pi_{N,T}(x) > \delta\}}\nonumber\\
&\le \delta^{-1}\left( |\pi'_{N,T}(x)-\pi'(x)| +  |l(x)| |\pi_{N,T}(x) - \pi(x)|\right). \label{ineq:ldiff_rmd}
\end{align}
Under our assumptions, we have that
$|V'(x)| \le c (1+|x| + |\widetilde{V}'(x)|) \le c(1 + |x|)$, where we have used that $\widetilde{V}'(0) = 0$ and $\widetilde{V}''$ is bounded. Moreover, 
$| W' \star \pi (x) | \le \| W' \|_\infty \| \pi \|_1 = \| W' \|_\infty < \infty$, which imply
$$
\sup_{|x| \le U} |l(x)| \lesssim U.
$$
Next, Lemma \ref{l: bound pi} gives the lower bound $\pi(x) \ge c_1 \exp (- \widetilde C |x|^2)$, which in turn implies $\pi(x) \ge 2 \delta$ for all $|x| \le U$. It follows that for all $|x| \le U$,
\begin{align*}
\mathbb{P}(\pi_{N,T}(x) \le \delta) &= \mathbb{P} (\pi(x) - \pi_{N,T}(x) \ge \pi(x) - \delta)\\
&\le \mathbb{P} (\| \pi - \pi_{N,T} \|_\infty \ge \delta ) \le \delta^{-2} \mathbb{E} \left[\| \pi - \pi_{N,T} \|^2_\infty \right].
\end{align*}
Finally, consider
$$
\pi_{N,T} (x) - \pi(x) = r_0 (x) + r_1(x)
$$
with
$$
r_0(x) = K_{h_0} \star (\Pi_{N,T}(x) - \Pi) (x), \quad r_1(x) = K_{h_0} \star \Pi (x) - \pi (x),
$$
where recall $K_{h_0} (x) = h_0^{-1} K(h_0^{-1} x)$ is a scaled kernel. We get
$$
|r_0(x)| \le c h_0^{-1} W_1 (\Pi_{N,T},\Pi)
$$ 
by applying the Kantorovich-Rubinstein theorem, moreover, $\E[W_1^2(\Pi_{N,T}, \Pi)] \le c N_T^{-1}$ because of Theorem \ref{th: prop chaos}. After substitution, we have that
$$
r_1(x) = \int_\R (\pi(x+h_0y) - \pi (x)) K(y) dy,
$$
where by the Taylor theorem 
$$
\pi(x+yh_0) = \pi (x) + \sum_{i=1}^{m-1} \frac{\pi^{(k)}(x)}{k!} (yh_0)^k + \frac{\pi^{(m)}(x+\tau yh_0 )}{m!} (yh_0)^m
$$
for some $0 \le \tau \le 1$. Recall that $K$ has order $m$. Moreover, the bound in Lemma \ref{l: bound pi} ensures that $|\pi^{(m)} (x)| \le c$, hence,
$$
|r_1(x)| \le c h_0^m
$$
uniformly in $x \in \mathbb{R}$. Our choice of $h_0$ yields
$$
\mathbb{E} \left[\| \pi_{N,T} -\pi \|_\infty^2 \right] \lesssim N_T^{-\frac{m}{m+1}}
$$
and similarly, that of $h_1$ yields
$$
\mathbb{E}\left[\| \pi'_{N,T} - \pi'  \|_\infty^2\right] \lesssim N_T^{-\frac{m}{m+2}}.
$$
Using these bounds in \eqref{ineq:ldiff}, \eqref{ineq:ldiff_rmd}, we obtain
$$
\sup_{|x| \le U} \mathbb{E}\left[|l_{N,T}(x)-l(x)|^2\right]^{\frac{1}{2}} \lesssim \exp (\widetilde C U^2) \left(N_T^{-\frac{m}{2(m+2)}} + U N_T^{-\frac{m}{2(m+1)}}\right),
$$
which concludes the proof.
\subsection{Proof of Theorem \ref{th: estim psi}}
Recall that
\begin{align*}
\Psi_{N,T}(x) &= \left(l_{N,T}(x) +  2 \alpha_{N,T} x + 2 \widetilde{V}'(x)\right) 1_{\{|x| \le \epsilon U\}},\\
\Psi(x) &= l(x) +  2 \alpha x + 2 \widetilde{V}^\prime(x)= - W' \star \pi (x).
\end{align*}
We decompose the mean integrated squared error of $\Psi_{N,T}$ into  
\begin{equation}{\label{eq: bound psi 14.5}}
{\rev \left( \mathbb{E} \left[ \int_{\mathbb{R}} \left(\Psi_{N,T}(x)-\Psi(x)\right)^2 dx \right] \right)^{\frac{1}{2}}} = \left( \int_{|x| > \epsilon U} (\Psi (x))^2 d x \right)^{\frac{1}{2}} + I
\end{equation}
where
$$
I:= \left( \int_{|x| \le \epsilon U} \mathbb{E} \left[ (\Psi_{ N,T}(x)-\Psi(x))^2 \right] d x \right)^{\frac{1}{2}}.
$$
Applying Minkowski's inequality, we get $I\le I_1 + I_2$, where
\begin{align*}
I_1 &:= \left( \int_{|x| \le \epsilon U} \mathbb{E}\left[(l_{N,T}(x) - l (x))^2\right] dx \right)^{\frac{1}{2}} 
\lesssim U^{\frac{1}{2}} \sup_{|x|\le U} \left( \mathbb{E}\left[(l_{N,T}(x)-l(x))^2\right] \right)^{\frac{1}{2}},\\
I_2 &:= \left( \int_{|x| \le \epsilon U} \mathbb{E}\left[ (2 \alpha_{N,T}x - 2 \alpha x)^2 \right] dx \right)^{\frac{1}{2}} 
\lesssim U^{\frac{3}{2}}  {\rev \left( \mathbb{E}  \left[(\alpha_{N,T}-\alpha)^2\right] \right)^{\frac{1}{2}}. }
\end{align*}
Next, consider the mean squared error of
$$
\alpha_{N,T} := \operatorname{arg} \min_{\alpha \in \mathbb{R}} \int_{\mathbb{R}} \left(l_{N,T}(x) + 2 \alpha x + 2 \widetilde{V}'(x)\right)^2 w_U(x) d x
$$ 
where {\rev we recall that} $w_U (\cdot) = w(\cdot/U)/U$. 
The estimator $\alpha_{N,T}$ and $\alpha$ can be computed explicitly via
\begin{align*}
\alpha_{N,T} &= - \frac{1}{2 C_2 U^2} \int_{\mathbb{R}} \left(l_{N,T}(x) + 2\widetilde{V}'(x)\right) x w_U (x) d x,\\ 
\alpha  &= - \frac{1}{2 C_2 U^2} \int_{\mathbb{R}} \left(l(x)+ 2\widetilde{V}'(x) + W' \star \pi (x)\right) x w_U(x) d x,
\end{align*} 
where $C_2 := \int_{\mathbb{R}} x^2 w (x) d x$. Since the support of $w_U$ is $[\epsilon U, U]$, Jensen's inequality can be applied to get
\begin{align*}
(\alpha_{N,T}-\alpha)^2 
\le \frac{1}{4 C^2_2 U^4} \int_{\mathbb{R}} \left(l_{N,T}(x)-l(x) - W' \star \pi (x)\right)^2 x^2 w_U(x) d x.
\end{align*}
By Minkowski's inequality, we obtain
$$
{\rev \left( \mathbb{E}  \left[(\alpha_{N,T}-\alpha)^2\right] \right)^{\frac{1}{2}}} \le \frac{1}{2C_2U^2}(J_1 + J_2),
$$
where
\begin{align*}
J_1 &:= \left( \int_{\mathbb{R}} \mathbb{E}\left[(l_{N,T}(x)-l(x))^2\right] x^2 w_U (x) d x \right)^{\frac{1}{2}}
\le C_2^{\frac{1}{2}} U \sup_{|x| \le U} {\rev \left( \mathbb{E}\left[(l_{N,T}(x)-l(x))^2\right] \right)^{\frac{1}{2}}},\\
J_2 &:= \left( \int_{\mathbb{R}} (W' \star \pi(x))^2 x^2 w_U (x) d x \right)^{\frac{1}{2}} \le \frac{C_{\infty} U^{\frac{1}{2}} }{2} 
\left( \int_{|x| > \epsilon U} (W' \star \pi (x) )^2 d x \right)^{\frac{1}{2}}
\end{align*}
with $C_{\infty} := \sup_{x \in \mathbb{R}} x^2 w(x).
$
Using the above bounds in \eqref{eq: bound psi 14.5} we conclude that 
\begin{align*}
{\rev \left( \mathbb{E} \left[ \int_{\mathbb{R}} (\Psi_{N,T}(x)-\Psi(x))^2 dx \right] \right)^{\frac{1}{2}} }
&\lesssim U^{\frac{1}{2}} \sup_{|x|\le U} {\rev \left( \mathbb{E}\left[(l_{N,T}(x)-l(x))^2\right] \right)^{\frac{1}{2}}}\\ 
&+ \left( \int_{|x| > \epsilon U} ( W' \star \pi(x) )^2 dx \right)^{\frac{1}{2}}.    
\end{align*}
The first term on the right hand side has been studied in Proposition \ref{prop: bound l}, and we are therefore left to study the second term. It is 
$$\left( \int_{|x| > \epsilon U} ( W' \star \pi (x) )^2 dx \right)^{\frac{1}{2}} = \left( \int_{|x| > \epsilon U} \left( \int_{\mathbb{R}} W'(x-y) \pi(y) dy \right)^2 dx \right)^{\frac{1}{2}}.$$
By Minkowski's inequality this is bounded by
\begin{multline}{\label{eq: bound conv 1}}
 \int_{|y| \le \frac{\epsilon}{2}U}  \left( \int_{|x|>\epsilon U} |W'(x-y)|^2 d x \right)^{\frac{1}{2}} \pi (y) d y 
 \\
 + \int_{|y| > \frac{\epsilon}{2}U} \left( \int_{|x|>\epsilon U} |W'(x-y)|^2 d x \right)^{\frac{1}{2}} \pi (y) dy.
\end{multline}
Then, we apply a change of variables $x-y := \widetilde{x}$, observing that $|x| > \epsilon U$ and $|y| \le \epsilon U/2$ imply $|x - y| > \epsilon U/2$. For the second integral we enlarge the domain of integration to $\R$. It follows that \eqref{eq: bound conv 1} is upper bounded by 
$$\left( \int_{|x|>\frac{\epsilon}{2} U} |W'(x)|^2 d x \right)^{\frac{1}{2}} \int_{\R} \pi(y) dy + \left( \int_{|y|>\frac{\epsilon}{2} U} \pi(y) d y \right) \left( \int_{\R} |W'(x)|^2 d x \right)^{\frac{1}{2}}.$$
Thanks to Lemma \ref{l: bound pi}, we know that
$$\int_{\frac{\epsilon}{2}U}^\infty \pi(x) dx \lesssim \int_{\frac{\epsilon}{2}U}^\infty \overline{\pi}(x) dx, $$
with $\overline{\pi}(x) = c_2 \exp(- C_V x^2)$ satisfying $\overline{\pi}'(x) = - 2 c_2 C_V x \overline{\pi}(x)$. Hence, we can write 
$$\int_{u}^\infty \overline{\pi}(x) dx \le \frac{1}{u} \int_{u}^\infty x \overline{\pi}(x) dx = \frac{\overline{\pi}(u)}{2 c_2 C_V u}. $$
It implies that
\begin{equation}{\label{eq: bound pi integral}}
\int_{\epsilon \frac{U}{2}}^\infty \pi(x) dx \lesssim \frac{ 2\exp(- C_V (\frac{\epsilon}{2}U)^2)}{ \epsilon U} 
\end{equation}
Then, the boundedness of $\int_{\R} \pi(y) dy$ is straightforward, while $\int_{\R} |W'(x)|^2 dx $ is bounded as $W' \in L^1(\R) \cap L^\infty(\R)$. 
It follows 
\begin{equation}{\label{eq: bound conv 2}}
\left( \int_{|x| > \epsilon U} ( W' \star \pi (x) )^2 dx \right)^{\frac{1}{2}} \lesssim \left( \int_{|x|>\frac{\epsilon}{2} U} |W'(x)|^2 d x \right)^{\frac{1}{2}} + \frac{2 \exp(- C_V(\frac{\epsilon}{2}U)^2)}{ \epsilon U},
\end{equation}
as we wanted.

\subsection{Proof of Corollary \ref{cor: rate psi}}
 Corollary \ref{cor: rate psi} is a consequence of Theorem \ref{th: estim psi} and of the exponential decay of the tails of $W'$. The choice of the threshold
\begin{equation}{\label{eq: choice U 9.75}}
U^2=c_u \log(N_T)
\end{equation}
gives
$$
\left( \int_{|y|>\frac{\epsilon}{2} U} |W'(y)|^2 d y \right)^{\frac{1}{2}} \lesssim \exp \left(-  \frac{p\epsilon^2U^2}{4}  \right) = N_T^{-\frac{p\epsilon^2 c_u}{4}}.
$$
Together with Theorem \ref{th: estim psi} it implies that 
\begin{align*}
{\rev \left( \mathbb{E} \left[ \int_{\mathbb{R}} (\Psi_{N,T}(x)-\Psi(x))^2 dx \right] \right)^{\frac{1}{2}} }
&\lesssim 
    (\log N_T)^{\frac14}N_T^{\widetilde{C} c_u} \left(N_T^{- \frac{m}{2(m + 2)}} +  (\log N_T)^{\frac{1}{2}} N_T^{- \frac{m}{2(m + 1)}}\right) \\
    & + N_T^{- C_V \frac{\epsilon^2}4 c_u} + N_T^{-p\frac{\epsilon^2}4c_u}.
 \end{align*}
Recall that $p > C_V $. 
Then, we can choose $c_u$ in order to obtain the balance between the remaining two terms above: 
\begin{equation}{\label{eq: choice cu}}
 c_u = \frac{m}{2(m + 2)} \frac{1}{(\widetilde{C} + C_V \frac{\epsilon^2}{4})}. 
\end{equation} 
Then, the convergence rate is 
\begin{align*}
&(\log N_T)^{\frac14} N_T^{- \frac{m}{2(m+2)} \frac{C_V \frac{\epsilon^2}{4}}{(\widetilde C + C_V \frac{\epsilon^2}{4})}} 
\end{align*} 
as claimed.


\subsection{Proof of Theorem \ref{th: estim beta}}

Let $\mathcal{L}_a:=\{y+\imath a: y \in \mathbb{R}\}$ for some $a \ge 0$. Assume $\operatorname{Re}(\mathcal{F}(\Pi)(s))>0$ for all $s \in \mathcal{L}_a$. Recall that we defined a sequence of entire functions $\rho_{N, T}(s) := \mathcal{F}(\Pi)(s)-\mathcal{F}(\Pi_{N,T})(s) + \varepsilon_{N,T}$, $s \in \mathbb{C}$, for some $\varepsilon_{N, T}>0$. Note that for all $s \in \mathcal{L}_a$
\begin{equation}\label{ineq:vareps}
|\mathcal{F}(\Pi_{N,T})(s) + \rho_{N,T}(s)| =  |\mathcal{F}(\Pi)(s) + \varepsilon_{N,T}| \ge \varepsilon_{N,T} > 0
\end{equation}
and $W'_{N, T}$ is defined via
$$
\mathcal{F} (W'_{N, T} )(s):=-\frac{\mathcal{F}(\Psi_{N, T})(s)}{\mathcal{F}(\Pi_{N, T})(s)+\rho_{N, T}(s)}.
$$
Let $\delta_{N,T} := W'_{N,T}-W'$. Plancherel's theorem gives
\begin{align}\label{ineq:intd}
\int_{-\infty}^\infty | \delta_{N,T} (x)|^2 dx &= \frac{1}{2\pi \imath} \int_{a-\imath\infty}^{a+\imath\infty} \overline{\mathcal{F}(\delta_{N,T})(\imath(-\overline{s}))} \mathcal{F}(\delta_{N,T})(\imath s) ds\nonumber\\ 
&= \frac{1}{2\pi} \int_{-\infty}^\infty \overline{\mathcal{F}(\delta_{N,T})(y-\imath a)} \mathcal{F}(\delta_{N,T})(y + \imath a)dy \lesssim \Delta_{-a} + \Delta_a,
\end{align}
where
\begin{align*}
\Delta_{\pm a} := \int_{-\infty}^\infty |\mathcal{F}(\delta_{N,T})(y \pm \imath a)|^2 d y =  \int_{\mathcal{L}_{\pm a}} |\mathcal{F}(\delta_{N,T})(s)|^2 d s.
\end{align*}
It suffices to consider $\Delta_a$, because the analysis of $\Delta_{- a}$ follows a similar route. Rewrite
$$
\mathcal{F}(\delta_{N,T})(s) = -\frac{\mathcal{F}(\Psi_{N, T})(s)- \mathcal{F}(\Psi)(s) + \varepsilon_{N,T} \mathcal{F}(W')(s)}{\mathcal{F}(\Pi_{N, T})(s)+\rho_{N, T}(s)}.
$$
Let us deal with the denominator by using \eqref{ineq:vareps} and $\operatorname{Re}(\mathcal{F}(\Pi)(s))+ \varepsilon_{N,T} > \operatorname{Re}(\mathcal{F}(\Pi)(s)) > 0$ for all $s \in \mathcal{L}_a$. We get
\begin{equation}\label{ineq:Delta}
\Delta_a \lesssim \frac{1}{\varepsilon_{N,T}^2} \Delta_{a,1} + \varepsilon_{N,T}^2 \int_{\mathcal{L}_a} \left|\frac{\mathcal{F}(W')(s)}{\mathcal{F}(\Pi)(s)} \right|^2 ds,
\end{equation}
where 
$$
\Delta_{a,1} := 
\int_{\mathcal{L}_a} |\mathcal{F}(\Psi_{N, T})(s) - \mathcal{F}(\Psi)(s)|^2 d s.
$$

\begin{align}{\label{eq: start deltaa1 0.5}}
{\rev \left(\mathbb{E} [ \Delta_{a,1} ] \right)^{\frac 1 2}} = 
\left( \int_{-\infty}^\infty \exp(-2 a x) \mathbb{E} \left[ |\Psi_{N,T}(x)-\Psi(x)|^2 \right] d x \right)^{\frac 1 2} \le 
D_1 + D_2,
\end{align}
where
\begin{align*} 
D_1 &:= \left( \int_{|x|> \epsilon U} \exp(-2ax) |\Psi (x)|^2 d x \right)^{\frac 1 2},\\
D_2 &:=  (2 \epsilon U)^{\frac 1 2} \exp(a\epsilon U) \sup_{|x| \le \epsilon U} {\rev \left( \mathbb{E} \left[ \int_{\mathbb{R}} (\Psi_{N,T}(x)-\Psi(x))^2 dx \right] \right)^{\frac{1}{2}} }.
\end{align*}
We start handling $D_1$, while the analysis on $D_2$ heavily relies on the bounds gathered in previous steps. We recall that
$$
\Psi(x) = - W' \star \pi (x) = - \int_{\mathbb{R}} W'(x-y) \pi (y) d y.
$$
By the Minkowski inequality,
\begin{align*}
D_1 &\le \int_{\mathbb{R}} \left( \int_{|x| > \epsilon U} \exp(-2ax) W'(x-y)^2 d x \right)^{\frac{1}{2}} \pi(y) dy
= I_1 + I_2,
\end{align*}
where after a change of variable the r.h.s.\ has been decomposed into
\begin{align*}
I_1 &:= \int_{|y+c_0| \le \frac{\epsilon}{2}U} \left( \int_{|x+y|>\epsilon U} \exp (-2ax) W'(x)^2 d x \right)^{\frac{1}{2}} \exp(-ay) \pi (y) d y,\\ 
I_2 &:= \int_{|y+c_0| > \frac{\epsilon}{2}U} \left( \int_{|x+y|>\epsilon U} \exp(-2ax) W'(x)^2 d x \right)^{\frac{1}{2}} \exp(-ay) \pi (y) d y
\end{align*}
with $c_0 := \frac{a}{2C_V}$.
Note that $|y+c_0| \le \frac{\epsilon}{2} U$ and $|x+y|>\epsilon U$ imply $|x-c_0|> \frac{\epsilon}{2}U$ in the inner integral in $I_1$. Let us also enlarge the domain of integration to $\mathbb{R}$ in the outer and inner integrals in $I_1$ and $I_2$ respectively. Then 
\begin{equation}\label{ineq:D23}
I_1+I_2 \le J_1 \int_{\mathbb{R}} \exp(-ay) \pi (y) dy +  J_2 \left( \int_{\mathbb{R}} \exp (-2ax) W'(x)^2 d x \right)^{\frac{1}{2}}, 
\end{equation}
where 
$$
J_1 := \left( \int_{|x - c_0|> \frac{\epsilon}{2} U} \exp(-2ax) W'(x)^2 d x \right)^{\frac{1}{2}}, \qquad J_2 := \int_{|y + c_0|>\frac{\epsilon}{2}U} \exp(-ay) \pi(y) dy.    
$$
The upper bound on $\pi$ in Lemma \ref{l: bound pi} implies that the first integral on the r.h.s.\ of \eqref{ineq:D23} is finite,
furthermore, 
\begin{align*}
J_2 &\lesssim \int_{|y+c_0|> \frac{\epsilon}{2}U} \exp(-ay-C_V y^2) d y\\
&\eqsim  \int_{|y|> \frac{\epsilon}{2}U} \exp(-C_V y^2) d y \lesssim \frac{\exp(-C_V (\frac{\epsilon}{2}U)^2)}{\frac{\epsilon}{2}U}.
\end{align*}
Now consider $J_1$ and the second integral on the r.h.s.\ of \eqref{ineq:D23}, where, recall, $W' \in L^2 (\mathbb{R})$ is odd and satisfies the assumption \eqref{eq: cond tails beta} for $p > C_V$.
This means that there exists $c>0$ such that 
$$
\int_x^\infty W'(u)^2 d u \le c \exp(-2px^2)
$$ 
for all $x \ge 1$, that is 
$$
\int_1^\infty \chi (u) W'(u)^2 d u \le c \int_1^\infty \chi (u) d F (u), 
$$
where $F (u) := 1- \exp (- 2p u^2)$ and $\chi (u) := \one (u\ge x)$, $u \in \mathbb{R}$, for all $x \ge 1$. By monotone approximation the above inequality remains valid for all non-negative non-decreasing functions $\chi : [1,\infty) \to [0,\infty)$, for example, $\chi(u) = \exp (2au)$. We get that the second integral of $W' \in L^2(\mathbb{R})$ on the r.h.s.\ of \eqref{ineq:D23} is finite, 
moreover, 
\begin{align*}
(J_1)^2 &\lesssim \int^{\infty}_{\frac{\epsilon}{2}U-c_0} \exp (2ax) W'(x)^2 dx\\ &\lesssim \int_{\frac{\epsilon}{2}U-c_0}^\infty \exp (2ax) d F(x) = \int_{\frac{\epsilon}{2}U-c_0}^\infty \exp (2ax-2px^2)(4px) dx\\
&\lesssim \exp \left(-2p \left( \frac{\epsilon}{2}U-c_1 \right)^2 \right),
\end{align*}
where $c_1=c_0+\frac{a}{2p}$.
Since $p > C_V$, it follows that 
$$
(J_1)^2 \lesssim \frac{\exp(-2C_V (\frac{\epsilon}{2}U)^2)}{(\frac{\epsilon}{2}U)^2}.
$$
We conclude that 
\begin{equation}\label{ineq:D1conv}
D_1 \lesssim \frac{\exp(-C_V (\frac{\epsilon}{2}U)^2)}{\frac{\epsilon}{2}U}.
\end{equation}

Regarding $D_2$, we recall that from the definition of $\Psi_{N,T}(y)$ and $\Psi(y)$ we deduce
$$
D_2 \lesssim (2\epsilon U)^{\frac{1}{2}} \exp (a \epsilon U) (R_1 + R_2),
$$
where 
\begin{align*}
R_2 &:= \sup_{|y| \le U} {\rev \left( \E\left[|l_{N,T}(y) - l(y)|^2\right] \right)^\frac{1}{2} },\\
R_1 &:= \epsilon U {\rev \left( \E\left[\left( \alpha_{N,T} - \alpha \right)^2\right] \right)^\frac{1}{2}} \lesssim R_2 + U^{-\frac{1}{2}} \left( \int_{|x|>\epsilon U} \Psi (x)^2 dx \right)^{\frac{1}{2}}.
\end{align*}
The integral in the upper bound on $R_1$ coincides with $D_1$ when $a=0$ hence satisfies \eqref{ineq:D1conv}, whereas the upper bound on $R_2$ follows from Lemma \ref{prop: bound l}. We get the upper bound
\begin{align*}
D_2 \lesssim \exp \left(a \epsilon U \right) \left( \left( \epsilon U \right)^{\frac{1}{2}} \exp ( \widetilde C U^2 ) \left(N_T^{-\frac{m}{2(m+2)}} + UN_T^{-\frac{m}{2(m+1)}} \right) 
+ \frac{\exp (-C_V (\frac{\epsilon}{2}U)^2 )}{\frac{\epsilon}{2}U}\right),
\end{align*}
which also works for 
$\mathbb{E}[\Delta_{a,1}]^{\frac{1}{2}}$. 
The choice $U^2 = c_u \log N_T$ for 
$c_u$ 
as in \eqref{eq: choice cu} gives us 
\begin{equation}\label{ineq:Delta1}
{\rev \left(\mathbb{E}[\Delta_{a,1}] \right)^{\frac{1}{2}}} \lesssim \exp \left( a \epsilon  (c_u \log N_T)^{\frac{1}{2}} \right) (\log N_T)^{\frac{1}{4}} N_T^{-\gamma} =: \lambda_{N,T}
\end{equation}
for $\gamma$ as in \eqref{eq: gamma}.
Finally, from \eqref{ineq:intd}, \eqref{ineq:Delta} and \eqref{ineq:Delta1} it follows that 
$$ 
{\rev \left(\E \left[ \int_{-\infty}^\infty \left|\delta_{N,T}(y)\right|^2 dy\right] \right)^\frac{1}{2}} \lesssim \frac{\lambda_{N,T}}{\varepsilon_{N,T}} + \varepsilon_{N,T},
$$ 
where 
note that $\varepsilon_{N,T}$ 
can be chosen such that $\varepsilon_{N,T} := \lambda_{N,T}^{\frac{1}{2}} \rightarrow 0$ for $N, T \rightarrow \infty$. It yields 
$$ {\rev \left(\E \left[ \int_{-\infty}^\infty \left|\delta_{N,T}(y)\right|^2 dy\right] \right)^\frac{1}{2}}  \lesssim \lambda^{\frac{1}{2}}_{N,T}$$
as required.

{\rev \subsection{Proof of Theorem \ref{ThLowe}}
We will apply the two hypotheses method described in \cite[Theorem 2.2]{T09}. For this purpose we introduce functions
\begin{align*}
f_0(x) &:= \frac{\imath}{\sqrt{2\pi }} \exp\left(-\frac{x^2}{2}\right)\left(\exp(\imath x)-\exp(-\imath x)\right), \\
f_{\delta,m}(x) &:= \frac{\imath\delta}{\sqrt{2\pi }} \exp\left(-\frac{x^2}{2}\right)\left(\exp(\imath mx)-\exp(-\imath mx)\right).
\end{align*}
The quantities  $\delta\to 0$ and $m\to \infty$ will be chosen later. Note that both functions are real-valued and odd. We set 
\[
W'_0(x):= f_0(x) \qquad \text{and} \qquad 
W'_1(x):= f_0(x) + f_{\delta,m}(x). 
\]
Consequently, the interaction potentials $W_0$ and $W_1$ are even functions. We fix the same confinement potential $V$ in both cases as suggested in Section \ref{secLowe}. Finally, we associate the density function $\pi_k$ (resp. probability measure $\mathbb P_k$) with the pair 
$(V,W_k)$ for $k=0,1$.

First, we check that $W_0,W_1\in \mathcal{A}_{r,p,J}$ for some $(p,r)$ under appropriate conditions on parameters $(\rho,m)$. For this purpose we set 
\begin{align} \label{deandm}
\delta=N^{-1/4} \qquad \text{and} \qquad m=c\delta^{-\frac{1}{J+2}},
\end{align}
for some constant $c>0$. We obtain that
\begin{align*}
f'_0(x) &= \frac{\imath}{\sqrt{2\pi }} \exp\left(-\frac{x^2}{2}\right)\left(x(\exp(-\imath x)-\exp(\imath x))
+ \imath (\exp(\imath x)+\exp(-\imath x)) \right), \\
f'_{\delta,m}(x) &= \frac{\imath\delta}{\sqrt{2\pi }} \exp\left(-\frac{x^2}{2}\right)\left(x(\exp(-\imath mx)-\exp(\imath mx))
+ \imath m(\exp(\imath mx)+\exp(-\imath mx)) \right).
\end{align*}
Since $\delta m\to 0$ due to \eqref{deandm}, we deduce that 
\[
\inf_{x\in \mathbb{R}} W_k''(x) \in [-r_1,-r_2], \qquad k=0,1,
\]
for some $r_1,r_2>0$.
On the other hand, we deduce for some constant $C>0$ that
\begin{align*}
\exp\left(x^2\right) \int_{|y|>x} |W'_0(y)|^2 dy \leq C \exp\left(x^2\right)
\int_{|y|>x} \exp\left(-y^2\right) dy =: r_3.
\end{align*}
As $\delta \to 0$, we conclude that 
\[
\limsup_{x\to \infty}  \exp(2px^2) \int_{|y|>x} |W'_k(y)|^2 dy \leq r_3,
\qquad k=0,1,
\]
for $p=1/2$. The condition 
\[
r_5(1\wedge |z|^{-J-2})\leq |\mathcal{F}(\pi_k)(z)| \leq r_6(1\wedge |z|^{-J-2}), \qquad k=0,1,
\]
is ensured by the choice of the confinement potential $V(x)=\alpha x^2/2+\widetilde V(x)$ (cf. Example  
\ref{exmp}). Finally, the Fourier transforms of $f_0$
and $f_{\delta,m}$ are given as
\begin{align*}
\mathcal F(f_0)(z) &= \imath \left( \exp\left(-\frac{(x+1)^2}{2}\right) -  \exp\left(-\frac{(x-1)^2}{2} \right)\right), \\[1.5 ex]
\mathcal F(f_{\delta,m})(z) &= \imath\delta \left( \exp\left(-\frac{(x+m)^2}{2}\right) -  \exp\left(-\frac{(x-m)^2}{2} \right)\right).
\end{align*}
In view of these identities, we obtain 
\begin{align*}
\int_{\mathbb{R}} \left| \frac{\mathcal{F}(f_0)(z)}{\mathcal{F}(\pi_0)(z)} \right|^2 dz \leq r_5 \int_{\mathbb{R}} \left|\mathcal{F}(f_0)(z) \right|^2
\left((1 \vee |z|^{2(J+2)}\right)dz\leq C_{r_5}
\end{align*}
and, due to \eqref{deandm}, 
\begin{align*}
\int_{\mathbb{R}} \left| \frac{\mathcal{F}(f_{\delta,m})(z)}{\mathcal{F}(\pi_1)(z)} \right|^2 dz &\leq r_5 \int_{\mathbb{R}} \left|\mathcal{F}(f_{\delta,m})(z) \right|^2
\left((1 \vee |z|^{2(J+2)}\right)dz \leq C_{r_5} \delta^2 m^{2(J+2)} \\
&\leq C_{c,r_5} 
\end{align*}
for some constants $C_{r_5}, C_{c,r_5}>0$. This proves that $W_0,W_1 \in \mathcal{A}_{r,p,J}$ for some $(p,r)$. 
Next, we compute the norm $\|W'_0 - W'_1\|_{L^2(\R)}$. We obtain that 
\begin{align*}
\int_{\R} f_{\delta,m}^2(x) dx = \frac{\delta^2}{2\pi} 
\int_{\R} \exp(-x^2) \left(2 - \exp(2\imath mx) - \exp(-2\imath mx) \right) dx
> \underline{c} \delta^2
\end{align*}
for some constant $ \underline{c}>0$. Hence, we deduce that 
\[
\|W'_0 - W'_1\|_{L^2(\R)} > \underline{c}^{1/2} \delta.
\]
In the last step, we compute the Kullback-Leibler divergence 
$K(\mathbb{P}_1^{\otimes N}, \mathbb{P}_0^{\otimes N})$. Following the arguments of the proof of \cite[Theorem 5.1]{BPP}, if $\|W'\|_{\infty}
<\sqrt{C_V-C_W}$, we deduce the inequality
\[
K(\mathbb{P}_1^{\otimes N}, \mathbb{P}_0^{\otimes N}) \lesssim
N \int_{\R} \left(f_{\delta,m} \star \pi_0 \right)^2(x) \pi_1(x) dx.
\]
Consequently, we conclude that 
\begin{align*}
K(\mathbb{P}_1^{\otimes N}, \mathbb{P}_0^{\otimes N}) &\lesssim
N \int_{\R} \left(f_{\delta,m} \star \pi_0 \right)^2(x) dx
= N \int_{\R} |\mathcal F (f_{\delta,m})(z)|^2  |\mathcal F(\pi_0)(z)|^2  dz \\[1.5 ex]
&\lesssim N \int_{\R} |\mathcal F (f_{\delta,m})(z)|^2 
\left(1 \wedge |z|^{-2(J+2)} \right) dz \lesssim N \delta^2 
m^{-2(J+2)}.
\end{align*}
Due to \eqref{deandm}, we finally deduce that 
\[
K(\mathbb{P}_1^{\otimes N}, \mathbb{P}_0^{\otimes N}) \lesssim
N\delta^4 = 1. 
\]
The statement of Theorem \ref{ThLowe} now follows from \cite[Theorem 2.2]{T09}. }

\section{On the Fourier transforms}{\label{s: transform}}
We have observed that \cref{ass: FTs} is crucial in order to obtain the polynomial convergence rate stated in Theorem \ref{th: estim beta}. However, this assumption may appear somewhat unclear at first. The objective of this section is to investigate sufficient conditions that guarantee the validity of \cref{ass: FTs}. 
Given our focus on the super-smooth case, it seems natural to require that the transform of the function $W'$ we aim to estimate decays exponentially fast. 
Verifying the condition \eqref{eq: ass transforms}, however, entails seeking a lower bound for the transform of $\pi$, which will be the main objective of this section. We will thus commence by examining the properties of the Fourier transform of $\pi$, with the clear intention of studying the set of zeros and finding a lower bound outside that set. Remark that it is the same whether considering the transform of the distribution $\Pi$ or the density $\pi$, and it is
$$\mathcal{F}(\Pi) = \mathcal{F}(\pi) = \int_\R \exp(\imath zx)\pi(x) dx.$$

\subsection{Properties of $\mathcal F(\pi)$}
We will use tools from complex analysis to study $\mathcal F(\pi)$ as a function defined on $\mathbb C$. More specifically, our aim is to represent the entire function $\mathcal F(\pi)$ via the Hadamard factorisation theorem. For definitions of the terminology used in this work, we refer to \cite{Ho73}. We begin by deriving an upper bound of the order of $\mathcal F(\pi)$, which stems from the fact that $\pi$ has Gaussian tails. {\rev Let us recall that the order of an entire function $f(z)$ is the infimum of all $m$ such that $f(z) = O(\exp(|z|^m))$ as $z \rightarrow \infty$.}

\begin{theorem}{\label{th: entire}}
Let \cref{ass: beta} hold and $\pi$ be as in \cref{eq: pi}. The map
\begin{equation}\label{eq: FTpi}
    \mathcal F(\pi)\colon\mathbb C\to\mathbb C,\quad z\mapsto \int_\R \exp(\imath zx)\pi(x)\textnormal{d} x
\end{equation}
is an entire function which coincides with the characteristic function of $\pi$ on $\mathbb{R}$. Moreover, the order of $\mathcal F(\pi)$ does not exceed 2.
\end{theorem}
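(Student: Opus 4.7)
The plan is to exploit the Gaussian upper bound $\pi(x) \le c_2 \exp(-C_V x^2)$ established in Lemma \ref{l: bound pi}, which controls the integrand in \eqref{eq: FTpi} across the whole complex plane.

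First I would show that the integral defining $\mathcal F(\pi)(z)$ converges absolutely for every $z = u + \imath v \in \mathbb C$. Since $|\exp(\imath z x)| = \exp(-v x)$, the integrand is bounded by $c_2 \exp(-C_V x^2 - v x)$, which is integrable on $\R$ because of the dominating Gaussian factor. A standard completing-the-square argument
$$
-C_V x^2 - v x = -C_V\Bigl(x + \frac{v}{2C_V}\Bigr)^2 + \frac{v^2}{4 C_V}
$$
then yields the explicit bound
$$
|\mathcal F(\pi)(z)| \le c_2 \sqrt{\pi / C_V}\, \exp\Bigl( \frac{v^2}{4 C_V} \Bigr) \le C \exp\bigl(|z|^2/(4 C_V)\bigr),
$$
which immediately shows that the order of $\mathcal F(\pi)$ is at most $2$ (in fact gives the sharper bound of order exactly $2$ with finite type). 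This handles the last assertion of the theorem.

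Next I would establish that $\mathcal F(\pi)$ is entire. For any bounded set $K \subset \mathbb C$, $|v|$ is bounded on $K$, so the dominating function $c_2 \exp(-C_V x^2 + |v|_{\max} |x|)$ is integrable and independent of $z \in K$. This local uniform integrability, combined with the fact that $z \mapsto \exp(\imath z x) \pi(x)$ is holomorphic for each fixed $x$, allows me to apply the standard theorem on differentiation of parameter-dependent integrals (or alternatively use Morera's theorem by swapping the integration order via Fubini on the contour integral of $\mathcal F(\pi)$ around any triangle, using that $\exp(\imath z x)$ is entire in $z$ so its closed-contour integral vanishes). Either route gives that $\mathcal F(\pi)$ is holomorphic on every bounded subset of $\mathbb C$, hence entire.

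The coincidence with the characteristic function of $\pi$ on $\R$ is immediate: for $z = y \in \R$, \eqref{eq: FTpi} reduces verbatim to the defining integral of the characteristic function. I do not foresee any real obstacle in this proof; the only point requiring mild care is the justification of holomorphicity via a uniform integrable majorant on compact subsets of $\mathbb C$, but the Gaussian tail of $\pi$ makes this straightforward.
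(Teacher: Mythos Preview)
Your proposal is correct and follows essentially the same route as the paper: use the Gaussian upper bound on $\pi$ from Lemma~\ref{l: bound pi} to dominate the integrand, appeal to Morera's theorem (via Fubini and Cauchy applied to $z\mapsto \exp(\imath zx)$) for holomorphicity, and bound the growth by a Gaussian in $|z|$ to get order $\le 2$. The only cosmetic difference is that you complete the square to obtain the explicit constant $1/(4C_V)$, whereas the paper uses the Young-type inequality $|zx|\le \tfrac{1}{2}\bigl(\tfrac{1}{c}|z|^2+c|x|^2\bigr)$ with $c<2C_V$; these are equivalent devices, and your parenthetical remark about ``order exactly~2'' should read as ``type at most $1/(4C_V)$ should the order equal~2'' rather than a claim that the order is exactly~2.
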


\begin{proof}
The integral in \eqref{eq: FTpi} exists and defines a continuous function at any $z = a + \imath b$ with $a,b \in \mathbb{R}$ since
$|\mathcal{F}(\pi)(a+\imath b)| \le \int_{\mathbb{R}} \exp (- b x) \pi (x) d x  < \infty$
by using $\pi (x) \lesssim \exp (-C_V x^2)$ and $\|W' \star \pi \|_\infty \le \| W' \|_\infty <\infty$. 
Furthermore,
we have $\int_{C}\mathcal F(\pi)(z)\,dz=0$ for any closed contour $C$
which follows from the Cauchy theorem applied to the function $z\mapsto\exp( \imath zx).$
By Morera's theorem, $\mathcal F(\pi)$ is an entire function. To determine the order of $\mathcal F(\pi)$, we use the inequality
\begin{equation*}
    \abs{zx}\le\frac12 \left(\frac{1}{c} \abs z^2+ c \abs x^2 \right) \quad \forall z,x\in\mathbb C,\ c>0,
\end{equation*}
such that, for any $z\in\mathbb C$,
\begin{align*}
    \abs{\mathcal F(\pi) (z)}&\le\int_\R \exp(|\operatorname{Im}(z)x|) \pi(x) d x\\
    &\le \exp\left( \frac{\operatorname{Im}(z)^2}{2c} \right) \int_\R \exp \left( \frac{c x^2}{2} \right) \pi(x) d x.
\end{align*}
Recall that according to \cref{l: bound pi}, $\pi(x) \lesssim \exp( -C_V x^2)$. Hence, the above integral is finite if $x\mapsto \exp(-(C_V-c/2)x^2)$ is integrable on $\R$, which is the case for $C_V>c/2$. As a result, the order of $\mathcal F(\pi)$ does not exceed 2.

\end{proof}

Now that we have that $\mathcal F(\pi)$ is an entire function of finite order, we will find an expression for the function using Hadamard's factorisation theorem. For this, we denote the zeros of $\mathcal F(\pi)$ as $(a_j)_{j\in\mathbb N}$ and order them by increasing modulus. Note that the zeros are symmetric around the imaginary axis, in other words, if $a_j$ is a zero of $\mathcal F(\pi)$, then so is its negative conjugate $-\overline a_j$.
Moreover, $\mathcal F(\pi)$ has no zeros on the imaginary axis because $\mathcal F(\pi) (\imath a) > 0$ for all $a\in\R$, see \cite[Corollary 1 to Theorem 2.3.2]{LiOs77}. 
Let us introduce the critical exponent of convergence $\rho_1$ of the sequence $(a_j)_{j\in\mathbb N}$:
$$
\rho_1= \inf \left\{ r > 0 : \sum_{j\in\mathbb N} 
\frac{1}{|a_j|^{r}} < \infty \right\}.
$$
We denote the order of $\mathcal F(\pi)$ as $\rho$ and make the following
\begin{assumption}\label{ass: rho}
    $\mathcal F(\pi)$ satisfies either $\rho_1 < \rho$ or $\rho_1 = \rho < 2$.
\end{assumption}

\noindent We will use the Hadamard canonical factors
\begin{align*}
    E_d (z) = 
    \begin{cases}
    1-z, &d=0,\\
    (1-z) \exp(z), &d=1,
    \end{cases}
\end{align*}
defined for $z\in\mathbb C$, to study the infinite product representation of $\mathcal F(\pi)$. 

\begin{theorem}
Let \cref{ass: beta} and \cref{ass: rho} hold and $\pi$ be as in \eqref{eq: pi} and $\mathcal F(\pi)$ as in \eqref{eq: FTpi}. Then, there exist $p_1\in\mathbb R$ and $p_2 \ge 0$ such that for all $z\in\mathbb C$,
\begin{equation}\label{eq:factor}
\mathcal F(\pi) (z) = \exp(-p_2 z^2 + \imath p_1 z) \prod_{j\in\mathbb N} E_1 \left( \frac{z}{a_j} \right).
\end{equation}
\end{theorem}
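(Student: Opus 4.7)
The plan is to apply Hadamard's factorization theorem to $\mathcal F(\pi)$ and then pin down the pre-exponential polynomial using the real-valuedness of $\pi$ together with the growth estimates from Lemma~\ref{l: bound pi}. By Theorem~\ref{th: entire}, $\mathcal F(\pi)$ is entire of order $\rho\le 2$, and $\mathcal F(\pi)(0)=\int_\R\pi(x)\,dx=1$ shows that the origin is not a zero. Under \cref{ass: rho} we have $\rho_1<2$ in both cases ($\rho_1<\rho$ and $\rho_1=\rho<2$), hence $\sum_j |a_j|^{-2}<\infty$, and the canonical product $P(z):=\prod_j E_1(z/a_j)$ converges to an entire function. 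Hadamard's theorem, after (if necessary) converting $E_0$ factors to $E_1$ factors by absorbing $-z\sum_j a_j^{-1}$ into the exponent should the genus be $0$, yields
\begin{equation*}
    \mathcal F(\pi)(z)=e^{Q(z)}P(z), \qquad z\in\mathbb C,
\end{equation*}
for some polynomial $Q$ with $\deg Q\le 2$. Since $P(0)=\mathcal F(\pi)(0)=1$, we may take $Q(0)=0$ and write $Q(z)=q_1 z+q_2 z^2$ with $q_1,q_2\in\mathbb C$.

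To identify $q_1$ and $q_2$, the next step is to exploit that $\pi$ is real-valued, whence $\overline{\mathcal F(\pi)(z)}=\mathcal F(\pi)(-\overline z)$. Combined with the symmetry of the zero set $\{a_j\}=\{-\overline{a_j}\}$ recalled in the excerpt and the elementary identity $\overline{E_1(w)}=E_1(\overline w)$, a reindexing shows that the canonical product satisfies $\overline{P(z)}=P(-\overline z)$. Hence $\overline{Q(z)}=Q(-\overline z)$, and matching coefficients of $\overline z$ and $\overline z^2$ forces $\overline{q_1}=-q_1$ and $\overline{q_2}=q_2$. Writing $q_1=\imath p_1$ and $q_2=-p_2$ with $p_1, p_2\in\R$ produces the claimed exponential factor $\exp(-p_2 z^2+\imath p_1 z)$.

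The remaining point, and what I expect to be the main obstacle, is the sign condition $p_2\ge 0$. When $\rho<2$, Hadamard's bound $\deg Q\le \rho<2$ already forces $q_2=0$ and hence $p_2=0$. The genuine case is $\rho=2$, which I would treat by a growth argument along the imaginary axis. Suppose, for contradiction, that $p_2<0$. Then $|e^{Q(\imath b)}|=e^{-p_1 b+p_2 b^2}$ decays faster than any polynomial as $|b|\to\infty$, while $P$ has order $\rho_1<2$ and so $|P(\imath b)|\le C_\varepsilon \exp(|b|^{\rho_1+\varepsilon})$ for any small $\varepsilon>0$ with $\rho_1+\varepsilon<2$. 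Together these force $|\mathcal F(\pi)(\imath b)|\to 0$ as $|b|\to\infty$. On the other hand, inserting the lower bound $\pi(x)\ge c_1 \exp(-\widetilde C x^2)$ from Lemma~\ref{l: bound pi} into $\mathcal F(\pi)(\imath b)=\int_\R e^{-bx}\pi(x)\,dx$ yields $\mathcal F(\pi)(\imath b)\ge c\exp(b^2/(4\widetilde C))\to\infty$, a contradiction. Thus $p_2\ge 0$. This sign-determination step is the crucial place where both the strict inequality $\rho_1<2$ built into Assumption~\ref{ass: rho} and the quadratic lower bound of Lemma~\ref{l: bound pi} are essential.
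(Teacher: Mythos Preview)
Your argument is correct and takes a somewhat different, more self-contained route than the paper's. The paper splits into the two cases of Assumption~\ref{ass: rho} separately: for $\rho_1<\rho$ it invokes a reference (Linnik--Ostrovski\u{\i}) to obtain the representation with $p_1\in\R$ and $p_2\ge 0$ directly, and then observes that $p_2=0$ would force $\rho\le\max(1,\rho_1)<2$; for $\rho_1=\rho<2$ it gets $q_2=0$ from the degree bound and shows $\operatorname{Re}(q_1)=0$ by computing $(\log\mathcal F(\pi))'(0)=i\E[X]$. Your conjugation identity $\overline{\mathcal F(\pi)(z)}=\mathcal F(\pi)(-\overline z)$ together with the symmetry $\{a_j\}=\{-\overline{a_j}\}$ of the zero set handles both coefficients at once and in both cases, which is a nice unification. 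For the sign of $p_2$ you replace the paper's order argument by a growth comparison on the imaginary axis using the Gaussian lower bound of Lemma~\ref{l: bound pi}; this is a genuinely different idea and avoids the external reference.

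One small imprecision to flag: when $\rho_1<1$, the product $P(z)=\prod_j E_1(z/a_j)$ need not have order exactly $\rho_1$, since writing $E_1=E_0\,e^{z/a_j}$ introduces the factor $\exp(z\sum_j a_j^{-1})$ of order $1$. What you actually have is $\operatorname{ord}(P)\le\max(1,\rho_1)<2$, which is all your contradiction requires; the bound $|P(\imath b)|\le C_\varepsilon\exp(|b|^{\max(1,\rho_1)+\varepsilon})$ with $\max(1,\rho_1)+\varepsilon<2$ still kills the growth $e^{p_2 b^2}$ when $p_2<0$.
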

\begin{proof}
Firstly, we consider the case $\rho_1 < \rho$. 
Then $\rho$ is either $2$ or $1$ by \cite[Lemma~4.10.1]{Ho73}. 
The representation in \eqref{eq:factor} follows from \cite[Remark, page 42]{LiOs77}. 
We note that if $\mathcal F(\pi)$ is of order $\rho = 2$, then $p_2 > 0$ because $p_2 = 0$ would lead to the contradiction $\rho \le \max(1, \rho_1)$ by \cite[Theorems 1.2.5, 1.2.7, 1.2.8]{LiOs77}. However, 
if $\mathcal F(\pi)$ is of order $\rho =1$ then its representation
may be reduced to
\begin{equation}\label{def:tildep}
\mathcal F(\pi) (z) = \exp \left( \imath \widetilde p_1 z \right) \prod_{j} 
E_0 \left(\frac{z}{a_j}\right), \qquad \widetilde p_1 := p_1 + \sum_{j} \operatorname{Im} \left( \frac{1}{a_j} \right).
\end{equation}

\noindent Next, we turn to the case $\rho_1 = \rho < 2$.
According to Hadamard's factorization theorem, 
$$
\mathcal F(\pi) (z) = \exp (q_1 z + q_0) \prod_{j} E_1 \left(\frac z{a_j}\right) 
$$
for some $q_1, q_0 \in \mathbb{C}$.
We note that $q_0 = 0$ since $\mathcal F(\pi) (0) = 1$. In order to say something about $q_1$, we note that $X$ with a probability density function $\pi$ satisfies $\E [|X|]< \infty$. Moreover, $\E [X] =0$, whence
\begin{align*}
    0 = \operatorname{Re}\left( \frac{\mathcal F(\pi)'(0)}{\mathcal F(\pi)(0)}\right) = \operatorname{Re} \left( \log \mathcal F(\pi) (z)  \right)' |_{z = 0}
    =  \left. \operatorname{Re}\left(q_1 + \sum_{j} \frac{z}{ a_j (z-a_j)} \right) \right|_{z=0} = \operatorname{Re} (q_1).
\end{align*}
We conclude that \eqref{eq:factor} holds true with $p_2 = 0$, $p_1 = \operatorname{Im}(q_1)$. 
\end{proof}

\begin{remark} \rm
    We have $p_2 >0$ only if $\rho_1 < \rho = 2$. If $\mathcal F(\pi)$ has no zeros then $\pi$ must be a density of a normal distribution with mean zero by \cite[Corollary to Theorem 2.5.1]{LiOs77}, since $\rho$ is finite. \qed
\end{remark}
 The following step consists in bounding the infinite product in \eqref{eq:factor}. 
If $\lfloor \rho_1 \rfloor = 0$, the term $\prod_{j\in\mathbb N}\exp ( z/a_j )$ is well-defined, and the representation becomes
\begin{equation*}
    \mathcal F(\pi)(z)=\exp (-p_2z^2+ \imath \widetilde p_1z )\prod_{j\in\mathbb N} E_0 \left(\frac z{a_j}\right),
\end{equation*}
where $\widetilde p_1$ is the same as in \eqref{def:tildep}.
Let us now use \cite[Lemma 4.12]{Du17}. For all $z\in\mathbb C$ outside $\cup_j B_{\epsilon_j} (a_j)$, where $\epsilon_j = 1/|a_j|^{\rho_1+\varepsilon}$ and $\varepsilon>0$, $\rho_1 + \varepsilon \le 2$, we get
\begin{equation*}
    \abs{\prod_{j\in\mathbb N} E_{\lfloor \rho_1 \rfloor} \left(\frac{z}{a_j}\right)}\gtrsim \exp (-c_\pi |z|^{\rho_1 + \varepsilon} ).
\end{equation*}
It leads us to the following theorem.

\begin{theorem}{\label{th: lower bound}}
Let \cref{ass: beta} and \cref{ass: rho} hold.  Then there exist $c_\pi > 0$ and a family of positive numbers $(\epsilon_j)_{j\in\mathbb N}$ such that, for any $z \in \mathbb{C}$  
outside $\cup_{j \ge 1} B_{\epsilon_j}(a_j)$, it holds that
\begin{equation}{\label{eq: lb pihat}}
|\mathcal F(\pi)(z)| \gtrsim \exp(- c_\pi |z|^2).
\end{equation}  
\end{theorem}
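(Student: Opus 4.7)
The plan is to derive the lower bound by combining the Hadamard factorisation proved just above with the canonical-product estimate of \cite[Lemma 4.12]{Du17} flagged in the paragraph immediately preceding the statement. By \cref{ass: rho}, which forces $\rho_1 < 2$ in every sub-case, the factorisation can be written uniformly as
$$
\mathcal F(\pi)(z) \;=\; \exp\bigl(-p_2 z^2 + \imath \widetilde p_1 z\bigr)\,\prod_{j\in\mathbb N} E_{\lfloor \rho_1 \rfloor}\!\left(\frac{z}{a_j}\right), \qquad p_2 \ge 0,\ \widetilde p_1 \in \R,
$$
with $p_2 > 0$ only in the sub-case $\rho_1 < \rho = 2$ and with $E_0$ appearing exactly when $\rho_1 < 1$. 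Taking moduli then splits the problem into a lower bound on the Gaussian--type prefactor and a lower bound on the canonical product, which I would treat separately.

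For the prefactor, the elementary inequalities $|\operatorname{Re}(z^2)|\le |z|^2$ and $|\operatorname{Im}(z)|\le |z|$ together with $p_2 \ge 0$ give
$$
\exp\bigl(-p_2\operatorname{Re}(z^2)-\widetilde p_1\operatorname{Im}(z)\bigr)\;\ge\; \exp\bigl(-p_2|z|^2-|\widetilde p_1|\,|z|\bigr)\;\gtrsim\; \exp(-c_1|z|^2),
$$
for some $c_1>0$, since the linear term is absorbed into the quadratic one for $|z|$ large and, for bounded $|z|$, the non-vanishing continuous prefactor stays away from zero. For the infinite product, \cite[Lemma 4.12]{Du17} supplies, for any fixed $\varepsilon>0$ with $\rho_1+\varepsilon\le 2$, a family of radii $\epsilon_j := 1/|a_j|^{\rho_1+\varepsilon}$ such that whenever $z \notin \bigcup_{j\ge 1}B_{\epsilon_j}(a_j)$,
$$
\Bigl|\prod_{j\in\mathbb N} E_{\lfloor \rho_1 \rfloor}(z/a_j)\Bigr|\;\gtrsim\; \exp\bigl(-c_2|z|^{\rho_1+\varepsilon}\bigr),
$$
for some $c_2>0$. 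Because $\rho_1+\varepsilon\le 2$, the exponent $|z|^{\rho_1+\varepsilon}$ is controlled by $1+|z|^2$, and multiplying the two estimates delivers $|\mathcal F(\pi)(z)|\gtrsim \exp(-c_\pi|z|^2)$ with $c_\pi := c_1+c_2$, as required.

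The main delicate point is structural rather than computational: I must check that each of the sub-cases permitted by \cref{ass: rho} -- namely $\rho_1 < \rho = 1$, $\rho_1 < \rho = 2$, and $\rho_1 = \rho < 2$ -- produces a factorisation of the common shape above and admits an admissible $\varepsilon > 0$ with $\rho_1+\varepsilon\le 2$. This is precisely where the exclusion of the boundary case $\rho_1 = \rho = 2$ becomes essential: were it allowed, the canonical-product estimate would degrade to the rate $|z|^{2+\varepsilon}$, overwhelming the quadratic decay of the prefactor and destroying the target bound. A secondary concern, and the place where one must be careful, is the balancing in the choice of the radii $\epsilon_j$: they must decay fast enough in $j$ to keep the excluded set $\bigcup_j B_{\epsilon_j}(a_j)$ meaningful, yet be wide enough to support the canonical-product lower bound; this trade-off is encoded exactly by the constraint on $\varepsilon$.
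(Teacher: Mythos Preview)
Your proposal is correct and follows essentially the same approach as the paper: the paper's argument is precisely the combination of the Hadamard factorisation \eqref{eq:factor} with the canonical-product lower bound from \cite[Lemma~4.12]{Du17} quoted immediately before the theorem, and you have simply fleshed out the prefactor estimate and the sub-case bookkeeping that the paper leaves implicit. A minor notational quibble: in the sub-case $\lfloor\rho_1\rfloor=1$ the linear coefficient should be $p_1$ rather than $\widetilde p_1$, but since both are real this does not affect your prefactor bound.
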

{
\begin{remark} \rm 
Note that without any additional assumption on \(\mathcal F(\pi)\)  one can prove a rougher bound of the form 
\begin{equation}{\label{eq: lb pihat-gen}}
|\mathcal F(\pi)(z)| \gtrsim \exp(- c_\pi |z|^{2+\varepsilon}).
\end{equation} 
for \(z\) outside $\cup_{j \ge 1} B_{\epsilon_j}(a_j)$ where \(\varepsilon\) is an arbitrary positive number. This follows from the fact that the order of $\mathcal F(\pi)$ does not exceed 2 and some estimates for the canonical products, see Section 12.1 in \cite{levin}. As a result, the condition 
\eqref{eq: ass transforms} is satisfied if $|\mathcal F(W')|\lesssim \exp(- c|z|^{2+\varepsilon})$ for some \(c>c_\pi.\)
\qed
\end{remark}
The above theorem can be combined with the Hardy Uncertainty Principle (HUP). HUP is a fundamental result in harmonic
analysis and mathematical physics, extending the ideas of the classical
Heisenberg Uncertainty Principle. Introduced by the British mathematician
G. H. Hardy in 1933, it provides a precise condition under which a
function and its Fourier transform cannot both decay too rapidly unless
the function is identically zero or a specific type of Gaussian function. The
principle can be stated as follows.
\begin{theorem}
Let $f$ be a function in $L^{2}(\mathbb{R})$, and let $\mathcal{F}(f)$ denote its Fourier transform.
Suppose there exist positive constants $\alpha$ and $\beta$ such
that:
\[
|f(x)|\leq C\exp(-\alpha x^{2})\quad\text{and}\quad|\mathcal{F}(f)(u)|\leq C\exp(-\beta u^{2})
\]
for all real numbers $x$ and $u$, where $C$ is a positive constant.
Then:
\begin{enumerate}
\item If $\alpha\beta>\frac{1}{4}$: The only solution is the trivial function
$f(x)=0$ almost everywhere. 
\item If $\alpha\beta=\frac{1}{4}$: The function $f(x)$ must be a Gaussian
function, specifically of the form: 
\[
f(x)=C\exp(-\alpha x^{2})
\]
where $C$ is a constant. 
\item If $\alpha\beta<\frac{1}{4}$: There exist non-trivial functions that
satisfy the decay conditions.
\end{enumerate}
\end{theorem}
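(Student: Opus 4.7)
The plan is to prove Hardy's uncertainty principle by extending $\mathcal{F}(f)$ to an entire function and then applying the Phragm\'en--Lindel\"of principle together with Liouville's theorem. First I would observe that the bound $|f(x)|\le C\exp(-\alpha x^2)$ makes the integral defining the Fourier transform absolutely convergent for every $z\in\mathbb{C}$, so $F(z):=\mathcal{F}(f)(z)$ extends to an entire function. Completing the square in the obvious bound gives, for $z=u+\imath v$,
\[
|F(u+\imath v)|\le C\int_{\R}\exp(-\alpha x^2+vx)\,dx\lesssim \exp\!\left(\frac{v^2}{4\alpha}\right),
\]
so $F$ is entire of order at most $2$.

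Next I would introduce the auxiliary entire function $G(z):=F(z)\exp(\beta z^2)$. Using $|\exp(\beta z^2)|=\exp(\beta(u^2-v^2))$, the hypothesis $|F(u)|\le C\exp(-\beta u^2)$ yields $|G(u)|\le C$ on the real axis, and the growth estimate above yields
\[
|G(\imath v)|\lesssim \exp\!\left(\left(\tfrac{1}{4\alpha}-\beta\right)v^2\right)
\]
on the imaginary axis. The three cases are then distinguished by the sign of $\frac{1}{4\alpha}-\beta$. In each case I would apply Phragm\'en--Lindel\"of in the four quadrants (each of opening $\pi/2$, matching the critical order $2$) to transfer the axis bounds to global bounds, and then invoke Liouville. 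When $\alpha\beta>1/4$, $G$ decays on the imaginary axis and is bounded on the real axis, so it is bounded on $\mathbb{C}$ and therefore constant; the decay forces $G\equiv 0$, hence $f=0$ a.e. When $\alpha\beta=1/4$, $G$ is bounded on both axes, hence constant on $\mathbb{C}$, giving $F(z)=K\exp(-\beta z^2)$; inverting the Fourier transform and using $1/(4\beta)=\alpha$ recovers $f(x)=K'\exp(-\alpha x^2)$. When $\alpha\beta<1/4$, the explicit Gaussian $f(x)=\exp(-\alpha x^2)$, whose Fourier transform is a constant times $\exp(-u^2/(4\alpha))$ with $1/(4\alpha)>\beta$, serves as a non-trivial example and settles the third case.

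The main obstacle will be the Phragm\'en--Lindel\"of step, because $G$ is entire of order exactly $2$, which is precisely the critical exponent for sectors of opening $\pi/2$; the standard statement requires strictly sub-critical growth. I would overcome this by a rotation trick: fix a small $\delta>0$ and apply Phragm\'en--Lindel\"of to $G_\delta(z):=G(z)\exp(-\imath\delta z^2)$ on a slightly rotated quadrant, on which $|G_\delta|$ has strictly subquadratic growth because the factor $\exp(-\imath\delta z^2)$ contributes a term $\exp(2\delta uv)$ with a favorable sign in a suitable rotated sector, then let $\delta\to 0^+$ to recover the desired boundedness of $G$ itself. Once that technical point is handled, the identification of the extremal case via the inverse Fourier transform and the construction of the example in the sub-critical case are both straightforward Gaussian computations.
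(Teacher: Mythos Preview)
The paper does not prove this theorem: it is quoted as the classical Hardy Uncertainty Principle (Hardy, 1933) and used as a black box to obtain an upper bound on the constant $c_\pi$, with no proof supplied. There is therefore nothing to compare your argument against in the paper itself.

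That said, your sketch is the standard complex-analytic proof of Hardy's theorem and is essentially correct: extend $F=\mathcal{F}(f)$ to an entire function of order $\le 2$, set $G(z)=F(z)\exp(\beta z^2)$, show $G$ is bounded on the real and imaginary axes, and then combine Phragm\'en--Lindel\"of in each quadrant with Liouville. Two small points are worth tightening. First, the auxiliary factor you propose, $\exp(-\imath\delta z^2)$, satisfies $|\exp(-\imath\delta z^2)|=\exp(2\delta uv)$, which \emph{grows} in the first quadrant; the sign should be flipped there (use $\exp(+\imath\delta z^2)$ in the first and third quadrants and $\exp(-\imath\delta z^2)$ in the second and fourth), so that the factor equals $1$ on both bounding axes and decays in the interior. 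Second, the cleanest way to handle the critical order is to bisect each quadrant along its diagonal: on the diagonal the perturbed function is bounded (in fact tends to zero), and in each resulting octant of opening $\pi/4$ the critical Phragm\'en--Lindel\"of exponent is $4>2$, so the standard subcritical statement applies directly; letting $\delta\to 0$ then yields the bound for $G$. With these adjustments your argument goes through, and the Gaussian computations for cases $\alpha\beta=1/4$ and $\alpha\beta<1/4$ are exactly as you describe.
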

This result gives us an upper bound on \(c_\pi \) in \eqref{eq: lb pihat}. Indeed,  if $\cup_{j \ge 1} B_{\epsilon_j}(a_j) \cap \mathcal{L}_{0} = \emptyset$, that is, all zeros of \(\mathcal F(\pi)\) are outside real line, then \(C_{V}c_{\pi}<1/4\) since
\[
\pi(x)\le c_{2}\exp(-C_{V}x^{2})
\]
due to Lemma~\ref{l: bound pi}. This implies the upper bound \(c_{\pi}<1/(4C_V).\) }

 Thanks to Theorem \ref{th: lower bound}, we can explicitly describe a scenario where condition \eqref{eq: ass transforms} is satisfied, assuming that we are in the super-smooth case where the tails of the transform of the interaction function are exponential.

Let us introduce slowly varying functions ${\rev s}(t)$, for $t \ge 0$ (see also \cite{SlowVarying}). These are positive and measurable functions such that, for each $\lambda > 0$, the following holds as $t \rightarrow \infty$: ${\rev s}(\lambda t)/{\rev s}(t) \rightarrow 1$.

\begin{assumption}
    \label{ass: beta exp decay}
    Let $(a_j)_{j\in\mathbb N}$, $c_\pi$, and $(\epsilon_j)_{j\in\mathbb N}$ be as in \cref{th: lower bound}. Assume that $\cup_{j \ge 1} B_{\epsilon_j}(a_j) \cap \mathcal{L}_{0} = \emptyset$. Furthermore, there is a slowly varying function ${\rev s}$ such that $\liminf_{z \rightarrow \infty } {\rev s}(z) > c_\pi$ for which the following holds true on $\mathcal L_{0}$:
    \begin{equation}\label{eq: supersmooth}
        |\mathcal{F}(W')(z)| = O( \exp(- |z|^2 {\rev s} (|z|))) \quad\text{as }\abs{z}\to\infty.
    \end{equation}
\end{assumption}

Note that in the aforementioned assumption, we assert that the transform of $W'$
  exhibits a decay that is almost Gaussian. This observation aligns with our expectations, given the nature of the model under consideration, where the confinement potential is driven by $x^2$. \\
   It is noteworthy to observe that Theorem 5 in \cite{Ass4}, specifically in the scenario where $\bar{a} = 0$, furnishes both necessary and sufficient conditions for the existence of non-trivial functions exhibiting an almost Gaussian nature, accompanied by an almost Gaussian Fourier transform as defined in Equation \eqref{eq: supersmooth}. Furthermore, according to Theorem 1 in \cite{Hardy}, if both $W'$ and its Fourier transform  $\mathcal{F}(W')(z)$ follow the asymptotic behavior $O(|z|^s \exp(-\frac{1}{2}z^2))$ as $z\to \infty$, then each can be expressed as a finite linear combination of Hermite functions. This provides concrete examples that satisfy our assumptions.

\begin{corollary}{\label{cor: transform}}
Let \cref{ass: beta}-\cref{ass: beta exp decay} hold.
Then, we have
$$\int_{\mathcal{L}_{ 0}} \left|\frac{\mathcal{F}\left(W^{\prime}\right)(z)}{\mathcal F(\pi)(z)}\right|^2 d z<\infty.$$
\end{corollary}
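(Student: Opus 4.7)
The plan is to combine the lower bound on $|\mathcal F(\pi)(z)|$ from Theorem~\ref{th: lower bound} with the almost-Gaussian upper bound on $|\mathcal F(W')(z)|$ from \cref{ass: beta exp decay}, splitting the integration contour $\mathcal L_0 = \R$ into a compact piece $\{|z|\le R\}$ and a tail piece $\{|z| > R\}$ for a suitably chosen threshold $R>0$.

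On the tail, the crucial observation is that the disjointness hypothesis $\cup_{j\ge 1} B_{\epsilon_j}(a_j)\cap \mathcal L_0 = \emptyset$ lets us invoke the lower bound of Theorem~\ref{th: lower bound} at every real point, so $|\mathcal F(\pi)(z)|^{-2}\lesssim \exp(2c_\pi |z|^2)$. Coupling this with \eqref{eq: supersmooth} yields, for all sufficiently large $|z|$,
$$
\left|\frac{\mathcal F(W')(z)}{\mathcal F(\pi)(z)}\right|^2 \lesssim \exp\!\bigl(-2|z|^2\bigl(s(|z|)-c_\pi\bigr)\bigr).
$$
Since $\liminf_{|z|\to\infty} s(|z|) > c_\pi$, one can pick $\eta>0$ and $R>0$ such that $s(|z|)-c_\pi \ge \eta$ whenever $|z|\ge R$, so the tail integrand is dominated by $\exp(-2\eta |z|^2)$, which is integrable on $\R$.

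For the compact piece, $\mathcal F(\pi)$ is continuous on $\R$ with $\mathcal F(\pi)(0)=1$, and the same disjointness hypothesis implies that no zero of $\mathcal F(\pi)$ lies on $\mathcal L_0$; hence $|\mathcal F(\pi)|$ attains a strictly positive minimum on the compact set $\{|z|\le R\}\subset \mathcal L_0$. Since $W'\in L^1(\R)$ by \cref{ass: beta}, the Fourier transform $\mathcal F(W')$ is bounded on $\R$, so the integrand is uniformly bounded on $\{|z|\le R\}$ and contributes a finite amount to the integral. Adding the two estimates yields the claim.

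The main obstacle would have been controlling $|\mathcal F(\pi)|^{-1}$ in neighbourhoods of the Hadamard zeros $(a_j)_{j\ge 1}$, but \cref{ass: beta exp decay} is crafted precisely to sidestep this difficulty by forcing every exclusion ball $B_{\epsilon_j}(a_j)$ to miss $\mathcal L_0$. The remaining quantitative ingredient is the inequality $\liminf s > c_\pi$: it is exactly the statement that $|\mathcal F(W')|$ decays strictly faster at infinity than the worst-case lower bound on $|\mathcal F(\pi)|$ permits, which is what one needs to make the squared quotient integrable at infinity.
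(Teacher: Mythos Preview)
Your proof is correct and follows essentially the same approach as the paper: combine the lower bound on $|\mathcal F(\pi)|$ from Theorem~\ref{th: lower bound} (applicable on all of $\mathcal L_0$ thanks to the disjointness hypothesis in \cref{ass: beta exp decay}) with the decay condition \eqref{eq: supersmooth} and the inequality $\liminf s > c_\pi$. The paper's proof compresses this into a single global inequality and omits the explicit compact/tail split; your version is more careful in handling the region $\{|z|\le R\}$, where \eqref{eq: supersmooth} is not directly available, by invoking continuity of $\mathcal F(\pi)$, the absence of real zeros, and boundedness of $\mathcal F(W')$ via $W'\in L^1(\R)$.
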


\begin{proof}
The corollary is a straightforward consequence of Theorem \ref{th: lower bound} and \cref{ass: beta exp decay}. We have indeed
$$\int_{\mathcal{L}_{0}}\left|\frac{\mathcal{F}\left(W^{\prime}\right)(z)}{\mathcal F(\pi)(z)}\right|^2 d z \le \int_{\mathcal{L}_{0}}\exp (- |z|^2 {\rev s}(|z|) + c_\pi |z|^2 ) dz,$$
which is bounded due to \cref{ass: beta exp decay}. 
\end{proof}

\noindent We can conclude by noting that, under the assumptions of Corollary \ref{cor: transform}, \cref{ass: FTs} is clearly satisfied. This implies that we can achieve a polynomial convergence rate for the estimation of $W'$, as stated in Theorem \ref{th: estim beta}.

{
\begin{example} \label{exmp:nozero}\rm
In some cases we can directly derive  the bound of the form \eqref{eq: lb pihat}. Let $V(x)=\sigma^{2}x^{2}/4$ for some $\sigma^{2}>0.$ Suppose that
$W\in L^{\infty}(\mathbb{R})$ satisfies 
\[
\pi\star W'(x)> 0,\quad \pi\star W''(x)<0,\quad \pi\star W(-x)=\pi\star W(x),\quad x>0.
\]
We have for any $\epsilon\in(0,\sigma),$
\begin{align*}
\mathcal{F}(\pi) & =\frac{c}{ Z_{\pi}}\exp(-\cdot^{2}/(2(\sigma^{2}-\epsilon^{2})))\star\mathcal{F}(f_{\epsilon})
\end{align*}
for some absolute constant $c>0$ where 
\[
f_{\epsilon}(x)=\exp(-x^{2}\epsilon^{2}/2-\pi\star W(x)).
\]
 We have 
\begin{align*}
f_{\epsilon}'(x)&=(-x\epsilon^{2}-\pi\star W'(x))f_{\epsilon}(x)<0,\\[1,5 ex] 
f_{\epsilon}''(x)&=\left[-\epsilon^{2}-\pi\star W''(x)+(x\epsilon^{2}+\pi\star W'(x))^{2}\right]f_{\epsilon}(x)>0,
\end{align*}
for all $x>0$ and small enough $\epsilon>0.$ Hence 
\begin{align*}
\mathcal{F}(f_{\epsilon})(u) & =2\int_{0}^{\infty}\cos(ut)f_{\epsilon}(t)\,dt\\
 & =-\frac{2}{u}\int_{0}^{\infty}\sin(ut)f_{\epsilon}'(t)\,dt\\
 & =\frac{2}{u^{2}}\int_{0}^{\pi}\sin(y)\sum_{k=0}^{\infty}(-1)^{k}\left[-f_{\epsilon}'\left(\frac{y+\pi k}{u}\right)\right]\,dy> 0
\end{align*}
for $u>0.$ As a result \(\mathcal{F}(\pi)(u)>0\) for all \(u\) and
\begin{align*}
\mathcal{F}(\pi)(u) & \geq \frac{c}{Z_{\pi}}\int_{0}^{u}\exp(-t^{2}/(2(\sigma^{2}-\epsilon^{2})))\mathcal{F}(f_{\epsilon})(u-t)\,dt\\
 & \geq \frac{c}{Z_{\pi}} \exp(-u^{2}/(2(\sigma^{2}-\epsilon^{2})))\int_{0}^{u}\mathcal{F}(f_{\epsilon})(u-t)\,dt\\
 & \geq c'\exp(-u^{2}/(2(\sigma^{2}-\epsilon^{2})))f_{\epsilon}(0)
\end{align*}
for $c'>0$ and $ u$ large enough. A similar approach can be used in the case of different potentials $V$.  \qed
\end{example}
}

\begin{example} \label{exmp} \rm
Here we provide an example demonstrating how a non-smooth confinement potential leads to an invariant density with a Fourier transform exhibiting polynomial decay. We have that
\begin{equation*}
\pi (x) = \frac{1}{Z_{\pi}} \exp \left( - 2V(x) - W \star \pi (x) \right), 
\end{equation*}
where $V (x) = (\alpha/2) x^2 + \widetilde V(x)$ and $W (x)$ satisfy \cref{ass: beta}. In addition, assume that $W$, $\widetilde V$ are infinitely differentiable respectively on $\mathbb{R}$, $\mathbb{R} \setminus \{0\}$ and there exist $\widetilde V^{(J+1)} (0^\pm) := \lim_{x \to 0^{\pm}} \widetilde V^{(J+1)} (x) < \infty$ such that
$$
\widetilde V^{(J+1)}(0^+) \neq \widetilde V^{(J+1)}(0^-).
$$ 
Furthermore, $\widetilde V^{(j)}$, $2 \le j \le J+2$, are bounded on $\mathbb{R}\setminus \{0\}$. 
It provides that $\pi$ is infinitely differentiable on $\mathbb{R} \setminus \{0\}$  and there exist $\pi^{(J+1)}(0^+) \neq \pi^{(J+1)}(0^-)$.  
Then, iteratively integrating by parts we obtain
\begin{align*}
\mathcal{F} (\pi)(z) 
= \frac{1}{(-\imath z)^{(J+1)}} (I_-(z)+I_+(z)),
\end{align*}
where
\begin{align*}
   I_-(z) := \int_{-\infty}^0 \exp(\imath zx) \pi^{(J+1)}(x) dx,\qquad 
   I_+ (z) := \int_0^\infty \exp(\imath zx) \pi^{(J+1)} (x) dx.
\end{align*}
If we integrate by parts once more, we obtain
\begin{equation}\label{eq:I+}
I_+ (z) = \frac{1}{\imath z} \left( -\pi^{(J+1)}(0^+) - \int_0^\infty \exp(\imath zx) \pi^{(J+2)}(x) dx \right)
\end{equation}
since $\pi^{(J+1)}(x) \to 0$ as $x \to \infty$ and $\pi^{(J+2)}$ is integrable, which in turn follow using the same arguments as in the proof of Lemma \ref{l: bound pi}. By Riemann's lemma, the last integral in \eqref{eq:I+} tends to zero and so
$$
I_+ (z) :=  -\frac{1}{\imath z}\pi^{(J+1)}(0^+) + o \left( \frac{1}{z} \right)
$$
as $z \to \infty$. Clearly an analogous reasoning applies to $I_- (z)$. It yields that for all large enough $z$,
$$|\mathcal{F}(\pi)(z)| \ge 
\frac{c}{|z|^{J + 2}}.$$
Hence, the Fourier transform of $\pi$ has in this case a polynomial decay, as claimed. Note that the absence of zeros on real line can be studied as in Example~\ref{exmp:nozero}. \qed

\end{example}

\section{Proofs of technical results}{\label{s: proof technical}}
This section is devoted to  proofs of our technical lemmas. We start by providing the proof of Lemma \ref{l: bound pi}. 

\subsection{Proof of Lemma \ref{l: bound pi}}
We can write the invariant density $\pi(x)$ in the following equivalent ways:
\begin{align}\label{eq:pi}
\pi (x) 
&= \frac{1}{Z_\pi} \exp ( -  2V(x) - W \star \pi (x)) \\ 
\label{eq:pi0}
&= \frac{1}{Z_0} \exp (-2V_0(x)- W_0 \star \pi(x)),
\end{align}
where $Z_\pi, 
Z_0$ are the normalizing constants and 
\begin{align*}
V_0 (x) &:= 
\int_0^x V'(u)du = V(x)- V(0),\\
W_0 (x) &:= 
\int_0^x W'(u) d u = W (x)- W (0),
\end{align*}
satisfy $W_0(0) = V_0(0) =0$ 
and $W_0'(x) = W'(x)$, $V'_0(x) = V'(x)$ for all $x \in \mathbb{R}$. Note 
$$
W_0 \star \pi (x) = W \star \pi (x) - W(0)
$$
and
$$
W_0 \star \pi (x) - W_0 \star \pi (0)
= \int_0^x W' \star \pi (u) d u
= W \star \pi (x) - W \star \pi (0).
$$
To obtain the upper and lower bounds on $\pi (x)$ let us use the representation \eqref{eq:pi0}.
According to \cref{ass: beta} we have $\| W' \|_1 < \infty$. We deduce that for all $x$,
$$
| W_0 (x) | \le \int_0^{|x|} |W'(u)| d u \le \| W' \|_1,
$$
hence,
$$
|W_0 \star \pi (x)| \le \| W_0 \|_\infty \| \pi \|_1 \le \| W' \|_1.
$$
We get that for all $x$,
\begin{equation}\label{ineq:upperpi}
\pi (x) \le \frac{1}{Z_0} \exp (-  2V_0(x) + \| W' \|_1) \eqsim \exp (- 2 V_0(x) )
\end{equation}
and
$$
\pi (x) \ge \frac{1}{Z_0} \exp (- 2V_0(x) - \| W' \|_1) \eqsim \exp (- 2V_0(x) ).
$$
As we have assumed that there exists a $C_V>0$ such that $V'' \ge C_V$ we obtain, for all $x \ge 0$, 
\begin{align*}
V_0(x) &= \int_0^x (V'(u)-V'(0)) d u + V'(0) x\\  
&\ge   \frac{C_V}{2} x^2 + V'(0) x
\end{align*}
and 
\begin{align*}
V_0(-x) &= \int_{-x}^0 V'(u) d u\\ & = \int_{-x}^0 (V'(0) - V'(u) )d u  - V'(0) x\\
& \ge  \frac{C_V}{2} x^2 - V'(0)x.  
\end{align*}
In conclusion, for all $x \in \mathbb{R}$, we deduce that 
\begin{equation}\label{ineq:lowerV0}
    V_0 (x) \ge \frac{C_V}{2} x^2 + V'(0)x.
\end{equation}
Recall that $V'(0) = 0$ under both A1 and A2 as $\widetilde{V} = 0$ or $\widetilde{V} \in C^2 (\mathbb{R})$ is assumed to be even. 
The proof about the upper bound of $\pi$ is therefore concluded. \\
Regarding the lower bound, we have that
\begin{equation}{\label{eq: lower start}}
 \pi(x) \ge c \exp (-\alpha x^2-2\widetilde V(x)).   
\end{equation}
Observe that, similarly as above, we can take advantage of the fact that $\tilde{V}'' \le \tilde{c}_2$ to obtain, for all $x \ge 0$,
\begin{align*}
\widetilde{V}(x) -\widetilde V(0)
& = 
\int_0^x (\widetilde{V}'(u) - \widetilde{V}'(0))  du + \widetilde{V}'(0) x \\
& \le 
\tilde{c}_2 \int_0^x u d u  = 
\frac{\tilde{c}_2}{2}x^2,
\end{align*}
having also used that $\widetilde{V}'(0) = 0$. An analogous reasoning holds true for $x \le 0$. It implies that, for any $x \in \R$, $\widetilde{V}(x) \le \widetilde{V}(0) + \frac{\tilde{c}_2}{2} x^2$. Replacing it in \eqref{eq: lower start} we obtain 
$$\pi(x) \ge c \exp(- \alpha x^2 - \tilde{c}_2 x^2 - 2 \widetilde{V}(0) ) = c \exp(- \widetilde{C} x^2)$$
with $\widetilde{C}= \alpha + \tilde{c}_2$, as we wanted. 
 
Let us move to the proof of the upper bound on the derivatives of $\pi$. More specifically, we want to prove by induction that for every natural number $n \le J$ there exists $c>0$ such that 
\begin{equation}{\label{eq: bound deriv pi2 0.5}}
|\pi^{(n)}(x)| \le c (1 + |x|)^{n } \pi (x).
\end{equation}
 Let us begin with the base case $n=1$. Then $\pi = - \varphi'\pi$, where 
 $\varphi' := 2 V' + W' \star \pi$. Note that $|\varphi'(x)| \le c (|x|+1)$, where $|V'(x)| \le c|x|$ follows from $\| V''\|_\infty < \infty$ and $V'(0) = 0$. 
 Now assume the claim \eqref{eq: bound deriv pi2 0.5} holds for all natural numbers up to and including $n< J$. Then the $(n+1)$-th derivative of $\pi$ is
\begin{equation}{\label{eq: deriv pih for ind 1.5}}
\pi^{(n + 1)} = (\pi')^{(n)} = - \sum_{k = 0}^n \binom{n}{k} \varphi^{(k + 1)} \pi^{(n - k)}. 
\end{equation}
The inductive hypothesis ensures that 
$|\pi^{(n-k)}(x)| \le c (1+|x|)^{(n-k)} \pi (x)$.
Moreover, we have that $|\varphi^{(k+1)}(x)| \le c (1+|x|)$ since
$\| W' \star \pi^{(k)} \|_\infty \le \| W' \|_1 \| \pi^{(k)} \|_\infty < \infty$,
whereas 
$|V^{(k+1)}(x)| \le c(1+|x|)$ follows from $\|V^{(j)}\|_\infty \le c$, $2 \le j \le J$.
We conclude by replacing it in \eqref{eq: deriv pih for ind 1.5}, which yields
\begin{align*}
|\pi^{(n + 1)}(x)| &\le c \sum_{k = 0}^n (1 + |x|) (1 + |x|)^{(n - k)} \pi(x) \le c (1+|x|)^{n+1} \pi(x)
\end{align*}
as we wanted, for $n + 1 \le J$.

\subsection{Proof of Proposition \ref{prop: prop chaos L4}}

For convenience, we omit the dependency on $N$ in our notation $X^{i,N}_t$. We will prove the claim by applying a GrÃ¶nwall-type argument to the function
\begin{equation*}
    y(t) := \mathbb{E}[|X^i_t - \overline X^i_t|^{2p}].
\end{equation*}
We define
$$
\Pi^N_t := \frac{1}{N} \sum_{i=1}^N \delta_{X^i_t}, \qquad \overline \Pi^N_t := \frac{1}{N} \sum_{i=1}^N \delta_{\overline X^i_t}, \qquad \overline \Pi_t := \mathcal{L}(\overline X_t).
$$
Since $(X^i_t)_{t \ge 0}$ and $(\overline X^i_t)_{t \ge 0}$ start at $X^i_0 = \overline X^i_0$ and are driven by the same Brownian motion $(B^i_t)_{t \ge 0}$, it holds that
\begin{align*}
X^i_t - \overline X^i_t = &- \int_0^t (V'(X^i_s) - V'(\overline X^i_s)\\ 
&+ \frac{1}{2} W' \star \Pi^N_s (X^i_s) - \frac{1}{2} W' \star \overline \Pi_s (\overline X^i_s)) d s.
\end{align*}
Applying It\^o's formula, summing over $i=1, \dots, N$ and dividing by $N$, yields 
\begin{align*}
\frac{1}{N} \sum_{i=1}^N |X^i_t - \overline X^i_t|^{2p} = 
&- \frac{2p}{N} \sum_{i=1}^N \int_0^t \left(A_i(s) + \frac{1}{2} B_i(s) + \frac{1}{2} C_i(s)\right) ds,
\end{align*}
where 
\begin{align*}
A_i(s) &:= (X^i_s - \overline X^i_s)^{2p-1} (V'(X^i_s)-V'(\overline X^i_s)),\\
B_i(s) &:= (X^i_s - \overline X^i_s)^{2p-1} (W' \star \Pi^N_s (X^i_s) - W' \star \overline \Pi^N_s (\overline X^i_s)),\\ 
C_i(s) &:= (X^i_s - \overline X^i_s)^{2p-1} (W' \star \overline \Pi^N_s (\overline X^i_s) - W' \star \overline \Pi_s (\overline X^i_s)).
\end{align*}
Taking the expectation and derivative gives 
\begin{equation}{\label{eq: start yprime 1}}
y^\prime(t) = - \frac{p}{N} \sum_{i=1}^N \mathbb{E}[2 A_i(t)+B_i(t)+C_i(t)].
\end{equation}
Using the assumption $V'' \ge C_V >0$ and the mean value theorem gives
\begin{align}{\label{eq: bound A 2}}
- \mathbb{E}[A_i(t)] \le - C_V \mathbb{E}[|X^i_t - \overline X^i_t|^{2p}].  
\end{align}
The analysis of $B_i(t)$ makes use of the symmetry of $W$ and the exchangeability of $(X^i_t, \overline X^i_t)$, $i=1,\dots,N$. Indeed, we obtain 
$$
- \mathbb{E} [B_i (t)]
= - \frac{1}{N} \sum_{j=1}^N \mathbb{E}[B_{ij}(t)],
$$
where 
\begin{align*}
\mathbb{E}[B_{ij}(t)] &= \mathbb{E}[(X^i_t-\overline X^i_t)^{2p-1} (W'(X^i_t-X^j_t) - W'(\overline X^i_t-\overline X^j_t))]\\
&= \frac{1}{2} \mathbb{E}[((X^i_t-\overline X^i_t)^{2p-1}-(X^j_t-\overline X^j_t)^{2p-1})(W'(X^i_t-X^j_t) - W'(\overline X^i_t-\overline X^j_t))].
\end{align*}
By the mean value theorem, the assumption $- W'' \le C_W$ gives
\begin{align*}
-\mathbb{E}[B_{ij}(t)] &\le \frac{C_W}{2} \mathbb{E}[((X^i_t-\overline X^i_t)^{2p-1}-(X^j_t-\overline X^j_t)^{2p-1}) ((X^i_t-X^j_t)-(\overline X^i_t-\overline X^j_t))]\\
&\le 2C_W \mathbb{E}[|X^i_t - \overline X^i_t|^{2p}],
\end{align*}
hence,
\begin{equation}{\label{eq: bound B 3}}
- \mathbb{E} [B_i (t)] \le 2 C_W \mathbb{E}[|X^i_t - \overline X^i_t|^{2p}].  
\end{equation}


\noindent H\"older's inequality for $C_i(t)$ implies 
$$
-\mathbb{E} [C_i(t)] \le \mathbb{E} [|X^i_t - \overline X^i_t|^{2p}]^{\frac{{2p-1}}{2p}} R_{i}(t)^{\frac{1}{2p}},
$$
where 
\begin{align*}
R_i(t) :=& \mathbb{E} [|W' \star \overline \Pi^N_t (\overline X^i_t)- W' \star \overline \Pi_t (\overline X^i_t)|^{2p}]\\
=&\Eb{\abs{\frac{1}{N} \sum_{j=1}^N W' (\overline X^i_t - \overline X^j_t)- W' \star \overline \Pi_t (\overline X^i_t)}^{2p}}.
\end{align*}
Expanding this term, we deduce that 
\begin{align*}
R_i(t) = \frac{1}{N^{2p}} \sum_{j_1, \dots, j_{2p} = 1}^N \Eb{\prod_{k=1}^{2p} \mathbb{E} [ W' (\overline X^i_t - \overline X^{j_k}_t) - W' \star \overline \Pi_t (\overline X^i_t) | \overline X^i_t ]}, 
\end{align*}
because of the independence of $\overline X^i_t$, $i=1,\dots,N$. The key remark to study this expectation is that for $i \neq j$,
$$
\mathbb{E}[W'(\overline X^i_t - \overline X^j_t) - W' \star \overline \Pi_t (\overline X^i_t) | \overline X^i_t] = 0
$$
since $\mathcal{L}(\overline X^j_t) = \overline \Pi_t$. We observe that if there exists $k$ such that $j_k \neq j_{\widetilde k}$ $\forall \widetilde k \in \{1, ... , 2p \} \setminus k$, then the $2p$-fold product vanishes. In other words, in order for a term to contribute to the sum in $R_i(t)$, for every index $j_k$ there must be another $j_{\widetilde k}$ such that $j_k=j_{\widetilde k}$. We can have at most $N^p$ of these combinations of indices.
We recall we have assumed $\| W' \|_\infty < \infty$, which implies $\mathbb{E}[|W'(\overline X^i_t - \overline X^j_t) - W' \star \overline \Pi_t (\overline X^i_t)|^k]<\infty$ for any $k\in\mathbb N$. Therefore, we have
\begin{equation*}
    R_i(t)^{\frac 1{2p}}\lesssim N^{-\frac 12}.
\end{equation*}
In conclusion,
{\rev 
\begin{equation}{\label{eq: bound C 4}}
- \mathbb{E} [C_i(t)] \lesssim \frac{1}{\sqrt{N}} \left( \mathbb{E}[|X^i_t - \overline X^i_t|^{2p}] \right)^\frac{2p-1}{2p}. 
\end{equation}
By exchangeability and replacing \eqref{eq: bound A 2}, \eqref{eq: bound B 3} and \eqref{eq: bound C 4} in \eqref{eq: start yprime 1} we obtain
\begin{align*}
y'(t) \le - 2(C_V-C_W) y(t) + \frac{c}{\sqrt{N}}y(t)^{\frac{2p-1}{2p}}.    
\end{align*}
This is equivalent to 
$$\frac{y'(t)}{y(t)^{\frac{2p-1}{2p}}} \le - 2(C_V-C_W) y(t)^\frac{1}{2p} + \frac{c}{\sqrt{N}}.$$
Let us now define $\beta(t) := y(t)^\frac{1}{2p}$. Observe that $\beta'(t) = \frac{1}{2p} \frac{y'(t)}{y(t)^{\frac{2p-1}{2p}}} $. Then, $\beta(t)$ is solution to 
$$\beta'(t) + \frac{1}{p}(C_V-C_W) \beta(t) \le \frac{c}{\sqrt{N}}.$$
Since $\beta(0) = y(0)=0$ and $C_V-C_W > 0$, the conclusion follows by integrating this Gr\"onwall-like differential inequality, which provides $\beta(t) \le c/\sqrt{N}$ uniformly in $t$.}

\subsection{Proof of Lemma \ref{l: moments}} \label{Sec6.3}
Observe that for $c>0$ and any random variable $X$, 
\begin{align}{\label{eq: exp to pol}}
0< \frac{1}{2}\left(\mathbb{E}[\exp(cX)] + \mathbb{E}[\exp(-cX)]\right) 
&= 1+ \sum_{k=1}^\infty \frac{c^{2k}}{(2k)!} \mathbb{E} [X^{2k}],    
\end{align}
hence, it suffices to study the asymptotic growth rate of its even moments in order to show $\mathbb{E}[\exp(\pm c X)] < \infty$. 

Let us start by looking at $\mathbb{E}[\exp(\pm c X_t^{i,N})]$, which leads us to study $\mathbb{E}[(X_t^{i,N})^{2k}]$ for $k \in \mathbb{N}$. For convenience, we omit the dependency on $N$ in our notation and recall that the particles follow the system of SDE's:
$$
X^i_t = X^i_0 + \int_0^t - V'(X^i_s) - \frac{1}{2N} \sum_{j=1}^N W' (X^i_s-X^j_s) d s + B^i_t, \quad i =1,\dots,N.
$$ 
Applying the It\^o lemma gives
\begin{align*}
(X^i_t)^{2k} = (X^i_0)^{2k} + \int_0^t &- 2k (X^i_s)^{2k-1} (V'(X^i_s) + \frac{1}{2N} \sum_{j=1}^N W'(X^i_s - X^j_s))\\ 
&+ \frac{1}{2} (2k)(2k-1) (X^i_s)^{2k-2} ds + 2k \int_0^t (X^i_s)^{2k-1} d B^i_s.
\end{align*}
Taking the expectation, and then differentiating,
\begin{align*}
\frac{d}{dt} \mathbb{E}[(X^i_t)^{2k}]  = &- 2k \mathbb{E}[(X^i_t)^{2k-1}V'(X^i_t)] - \frac{k}{N} \sum_{j=1}^N \mathbb{E} [ (X^i_t)^{2k-1} W'(X^i_t - X^j_t)]\\ 
&+  k(2k-1) \mathbb{E}[(X^i_t)^{2k-2}],
\end{align*}
where \begin{align}{\label{eq: conv V}}
- \mathbb{E}[(X^i_t)^{2k-1} V'(X^i_t)] &= - \mathbb{E}[(X^i_t)^{2k-1}(V'(X^i_t)-V'(0))] - \mathbb{E}[(X^i_t)^{2k-1} V'(0)]\nonumber\\ 
&\le - C_V \mathbb{E} [|X^i_t|^{2k}] + |V'(0)|\mathbb{E}[|X^i_t|^{2k-1}],
\end{align}
follows from the strong convexity of $V$, i.e.\ $V''\ge C_V > 0$, whereas $\| W' \|_\infty < \infty$ implies
$$
- \mathbb{E}[(X^i_t)^{2k-1} W' (X^i_t - X^j_t)] \le \| W' \|_\infty \mathbb{E}[|X^i_t|^{2k-1}].
$$
We get
\begin{align}
\frac{d}{dt} \mathbb{E} [|X^i_t|^{2k}] \le &- 2 k C_V \mathbb{E} [|X^i_t|^{2k}] + 2k |V'(0)|\mathbb{E}[|X^i_t|^{2k-1}] + k\| W' \|_\infty \mathbb{E}[|X^i_t|^{2k-1}]\nonumber\\ \label{ineq:m2k}
&+ k (2k-1) \mathbb{E} [|X^i_t|^{2k-2}],
\end{align}
where 
\begin{align*}
\mathbb{E}[|X^i_t|^{2k-l}] &= \mathbb{E}[|X^i_t|^{2k-l} (\one(|X^i_t|\le C) + \one(|X^i_t|>C))] \\
&\le C^{2k-l} + C^{-l} \mathbb{E}[|X^i_t|^{2k}], \quad l = 1,2, 
\end{align*}
for all $C >0$. 
Denote $m_t(2k) := \mathbb{E}[|X^i_t|^{2k}]$. 
Then it holds that
\begin{align*}
\frac{d}{dt}m_t(2k) &\le -2kC_V m_t(2k) + k (2|V'(0)| + \| W'\|_\infty) (C^{2k-1} + C^{-1} m_t(2k))\\
&\quad + k (2k-1) (C^{2k-2} + C^{-2} m_t(2k))\\
&= - A k (m_t (2k) - B),
\end{align*}
where
\begin{align*}
A &:= 2C_V - (2|V'(0)| + \| W' \|_\infty) C^{-1} - (2k-1) C^{-2},\\ 
B &:= A^{-1} ((2|V'(0)| + \| W' \|_\infty) C^{2k-1} +  (2k-1) C^{2k-2})
\end{align*}
do not depend on $t$. For some fixed $\varepsilon \in (0,1)$, set
$$
C = C(k) := \left( \frac{(2k-1)}{2C_V \varepsilon} \right)^{\frac 1 2}, \quad k \in \mathbb{N},
$$ 
so that 
\begin{align*}
A = A(k) \sim 2 C_V (1-\varepsilon) >0, \qquad B = B(k) \sim \frac{2C_V \varepsilon C(k)^{2k}}{A(k)},
\end{align*}
where $\sim$ denotes asymptotic equality as $k \to \infty$. The Gr\"onwall inequality 
$$
\frac{d}{dt} m_t(2k) = \frac{d}{dt} (m_t(2k)-B(k)) \le - A(k) k (m_t(2k)-B(k))
$$
gives
\begin{align*}
    m_t(2k) 
    \le B(k) + (m_0(2k) - B(k)) e^{- A(k) k t},
\end{align*}
which in turn implies that, for all large enough $k$, uniformly in $t$,
$$
m_t(2k) \le \max (m_0(2k),B(k)).
$$
Based on \eqref{eq: exp to pol} and the subsequent analysis, we conclude that $\sup_{t \ge 0} \mathbb{E}[ \exp (\pm c X_t^i)]$ is bounded as long as
$$\sup_{t \ge 0} \sum_{k = 1}^\infty \frac{c^{2k}}{(2k)!} m_t(2k) \lesssim \sum_{k = 1}^\infty \frac{c^{2k}}{(2k)!} (m_0(2k)+B(k))$$
is also bounded.
We observe that $\sum_{k = 1}^\infty c^{2k} m_0(2k)/(2k)!$ converges because of \eqref{eq: exp to pol} and 
our assumption
$\mathbb{E}[\exp (\pm c X_0^i)] < \infty$.
To test the convergence of the series $\sum_{k=1}^\infty c^{2k} B(k)/(2k)!$
 we use the ratio test also known as d'Alembert's criterion: 
\begin{align*}
\frac{c^{2k+2} B(k+1)/(2k+2)!}{c^{2k}B(k)/(2k)!}
&\sim 
\frac{c^2 C(k+1)^{2k+2}}{C(k)^{2k} (2k+1)(2k+2)}\\
&= \frac{c^2 (2k+1)^{k+1}}{(2C_V \varepsilon) (2k-1)^k (2k+1) (2k+2)} \frac{}{}  
 \to 0
\end{align*}
as $k\to \infty$. The result for the exponential moments of $X_t^i$ follows. \\
\\
One can follow the preceding proof to get $\sup_{t \ge0} \mathbb{E}[\exp(\pm c \overline X_t )]<\infty$ from $\mathbb{E}[\exp (\pm c \overline X_0)] < \infty$, where recall $(\overline X_t)_{t \ge 0}$ is a solution of
$$
\overline X_t = \overline X_0 - \int_0^t V'(\overline X_s) + \frac{1}{2}  W'\star \overline \Pi_s (\overline X_s) d s +  B_t
$$
with $\overline \Pi_t := \mathcal{L}(\overline X_t)$. Indeed, It\^o's lemma for $(\overline X_t)^{2k}$ gives 
\begin{align*}
\frac{d}{dt} \mathbb{E}[(\overline X_t)^{2k}] = &- 2k \mathbb{E}[(\overline X_t)^{2k-1}(V'(\overline X_t)+\frac{1}{2}W' \star \overline \Pi_t (\overline X_t)]\\ 
&+ k(2k-1) \mathbb{E}[(\overline X_t)^{2(k-1)}],
\end{align*}
where, in the same manner as in \eqref{eq: conv V}, we have
$$
- \mathbb{E}[(\overline X_t)^{2k-1} V'(\overline X_t)] 
\le - C_V \mathbb{E}[|\overline X_i|^{2k}] + |V'(0)| \mathbb{E}[|\overline X^i_t|^{2k-1}],
$$
and since $\|W'\|_\infty<\infty$, we get
\begin{align*}
- \mathbb{E}[(\overline X_t)^{2k-1} W' \star \overline \Pi_t (\overline X_t)] \le \| W' \|_\infty \mathbb{E}[|\overline X_t|^{2k-1}].
\end{align*} 
Thus, we obtain the inequality \eqref{ineq:m2k} for $\mathbb{E}[|\overline X_t|^{2k}]$ instead of $\mathbb{E}[|X^i_t|^{2k}]$. The rest of the proof remains the same.

\backmatter





\bmhead{Acknowledgements}
Denis Belomestny acknowledges financial support from Deutsche Forschungsgemeinschaft (DFG), grant 497300407.
Mark Podolskij and Shi-Yuan Zhou gratefully acknowledge financial support of ERC Consolidator Grant 815703 “STAMFORD: Statistical Methods for High Dimensional Diffusions”.

\end{document}